\documentclass[11pt, reqno]{amsart}
\usepackage{mathtools, amsmath, amssymb, amsthm, breqn, mathrsfs, hyperref, latexsym, amsfonts, amsrefs, cleveref}
\usepackage{thmtools}
\usepackage[mathscr]{eucal}

\numberwithin{equation}{section}

\theoremstyle{definition}
\newtheorem{definition}{Definition}[section]
\newtheorem{example}[definition]{Example}

\newtheorem{remark}[definition]{Remark}

\theoremstyle{plain}
\newtheorem{theorem}[definition]{Theorem}
\newtheorem{lemma}[definition]{Lemma}
\newtheorem{proposition}[definition]{Proposition}
\newtheorem{corollary}[definition]{Corollary}

\newtheorem{result}[definition]{Result}

\newcommand\hull[1]{\widehat{#1}}

\newcommand{\rl}{\Re\mathfrak{e}}

\newcommand{\id}{\mathbb{I}}

\newcommand{\smoo}{\mathcal{C}}
\newcommand{\holo}{\mathcal{O}}

\newcommand{\cplx}{\mathbb{C}}

\newcommand{\rea}{\mathbb{R}}

\newcommand{\C}{\mathbb{C}}
\newcommand{\R}{\mathbb{R}}
\renewcommand{\L}{\mathscr{L}}

\newcommand{\ehat}{\widehat{\mkern6mu}}

\begin{document}

\title [Carleman Approximation]{Carleman Approximation for certain sets with an isolated singularity}
\author{Harshith Alagandala and Sushil Gorai}
	\address{Department of Mathematics, Western University,
		London, ON, Canada -- N6A 3K7}
	\email{halagand@uwo.ca, sairaj922@gmail.com}
	
	\address{Department of Mathematics and Statistics, Indian Institute of Science Education and Research Kolkata,
		Mohanpur -- 741 246}
	\email{sushil.gorai@iiserkol.ac.in, sushil.gorai@gmail.com}
	\thanks{}
	\keywords{
        Carleman approximation,
        Polynomial convexity,
        Totally real subspaces,
        Lipschitz graph}
	\subjclass[2020]{Primary: 32E20, 32E30, 32V40}

	\date{\today}

\maketitle

\begin{abstract}
    In this paper, we prove that local polynomial convexity at the origin for 
		the union of finitely many transverse totally real subspaces  of maximal dimension
		is sufficient for Carleman approximation. 
		Some new conditions are given for the polynomial convexity of the union of three transverse totally real planes in $\cplx^2$.
		We also provide a sufficient condition on the union of two Lipschitz graphs for Carleman approximation. 
    Along the way, we provide sufficient conditions for union of two Lipschitz graphs to be polynomially convex.
		Finally, we find a family of surfaces in $\C^2$ with a hyperbolic complex point that allows Carleman approximation.
\end{abstract}

\section{Introduction and the statements of the results}
\label{sec:intro}

A closed subset  $S$  of $\C^n$  is said to be a Carleman set if
for $f \in \smoo(S, \C)$ and
a strictly positive real valued continuous function $\epsilon$,
there exists $g \in \holo(\C^n)$ such that
\[
  \left| f(z) - g(z) \right| < \epsilon(z)\;\; \forall z \in S.
\]
We say that $S$ allows Carleman approximation if $S$ is a Carleman set.
In 1927, Carleman\cite{carleman1927} showed that the set $\mathbb{R} \subset \C$ allows Carleman approximation.
In one variable, Carleman sets are well understood.
A set $S\subset \cplx$ is a Carleman set if and only if
it is polynomially convex, locally connected at infinity and $\text{Int} S=\varnothing$ \cite[Remark 2.8]{manne-wold-ovrelid2011}.
In higher dimensions, totally real affine subspaces of $\C^n$
are Carleman sets (Scheinberg \cite{scheinberg76}). 
A necessary condition for Carleman approximation is polynomial convexity.
A compact subset $K$ is called polynomially convex if 
$$K=\hull{K}:=\{z\in\C^n: |p(z)|\leq\sup_K|p|\;\;\forall p\in\C[z_1,\dots,z_n]\}.$$
A closed subset $M \subset \C^n$ is called polynomially convex
if there is a sequence  $\{K_m\}$ of polynomially convex compact subsets such that
$K_m \subset K_{m+1}$, $\cup K_m = M$.
Magnusson and Wold \cite{magnusson2016characterization} proved that
a stratified totally real set allows Carleman approximation if and
only if it is polynomially convex and has bounded $E$-hulls. 
\begin{itemize}
\item A closed set $M\subset \C^n$ is said to have bounded $E$-hull
    if for any compact  set $K \subset \C^n$,
    the set $\widehat{K \cup M} \setminus (K\cup M)$ is bounded.

\item  A subset of $X \subset \C^n$ is said to be a stratified totally real set
    if $X$ is the increasing union $X_0\subset X_1\subset\dots\subset X_n=X$ of closed sets,
    such that $X_j\setminus X_{j-1}$ is totally real set.

\end{itemize}
For a given closed set, it is quite challenging to show 
whether it is polynomially convex or has bounded E-hull. 
In this paper,
we consider three different setting for proving Carleman approximation directly: 

\begin{itemize}
    \item [(i)] Finite union of mutually transverse maximal totally real subspaces in $\cplx^n$ which is locally polynomially convex at the origin.
    \smallskip
    
    \item [(ii)] Union of two transverse Lipschitz graphs in $\cplx^2$.
    \smallskip

    \item [(iii)] Real surfaces in $\cplx^2$ with a nondegenerate hyperbolic complex point.
\end{itemize}
We now describe both of them in a bit more detail.

\subsection{The union of finitely many totally real subspaces}
Let  $L_1, ... , L_m$ be maximal totally real subspaces such that
$L_k \cap L_j = \{0\}$ for all $k\neq j$.
Assume that $\bigcup_{j=1}^m L_j$ is polynomially convex. 
One of the aim of this article is to show that $\bigcup_{j=1}^m L_j$ allows Carleman approximation by entire functions in $\C^n$.
Clearly, $\bigcup_{j=1}^m L_j$ is a stratified totally real set.
It is not clear and very difficult to determine if this union has bounded $E$-hull. 
For the union of two totally real maximal subspaces, the answer was given by Manne \cite{manne1994carlemantwo}:
The union of two maximal totally real subspaces $L_1$ and $L_2$ which
intersect only at the origin allows Carleman approximation
when $L_1 \cup L_2$ is polynomially convex.

Assume that $L_j$ for $j=0,\dots, m$ are maximal totally real subspaces of $\C^n$ with $L_k \cap L_j = \{0\}$ for $k\neq j$.
After applying an invertible $\C$-linear map, we can identify
\[L_0=\rea^n\;\;\text{and}\;\; L_j=M(A_j) \text{ for } j=1,\dots, m,\]
where $A_j$ is a $n \times n$ matrix with real entries 
such that $(A_j + iI)$ is invertible,
and $M(A_j) = (A_j + iI) \R^n$ for $j=1,...,m$. This formulation was given by Weinstock \cite{weinstock1988polynomial}.

The polynomial convexity of $L_1 \cup L_2$ is completely classified:
Weinstock \cite{weinstock1988polynomial} showed that $\R^n \cup M(A_1)$ is polynomially convex if and only if the matrix $A_1$ does
not have any purely imaginary eigenvalues of modulus greater than one.
Carleman approximation on this union is given by Manne when the Weinstock condition holds:
\begin{result}[Manne \cite{manne1994carlemantwo}]\label{res-Manne}
The union $\R^n\cup M(A)$ allows Carleman approximation if and only if $A$ has no purely imaginary eigenvalue of modulus greater than one.
\end{result}

We show that \Cref{res-Manne} holds for the union of finitely many pairwise transverse totally real subspaces:

\begin{theorem}
  \label{thm:main}
  Let $L_1, ... , L_m \subset \C^n$ be maximal totally real subspaces such that
    $L_k \cap L_j = \{0\}$ for all $k\neq j \in \{1, ..., m\}$ and
  $\bigcup_{j=1}^m L_j$ is locally polynomially convex at the origin.
  Then $\bigcup_{j=1}^m L_j$ allows Carleman approximation by entire functions in $\C^n$.
\end{theorem}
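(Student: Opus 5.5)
We will prove \Cref{thm:main} through the characterization of Magnusson and Wold \cite{magnusson2016characterization}. Put $X=\bigcup_{j=1}^m L_j$. It is a stratified totally real set with $X_0=\{0\}$ and $X_1=X$, since $X\setminus\{0\}$ is a disjoint union of totally real submanifolds. So it suffices to show that $X$ is polynomially convex and that $X$ has bounded $E$-hulls.

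\emph{Step 1: from local to global polynomial convexity.} Every $L_j$ is a real cone, so $X$ is invariant under the dilations $z\mapsto tz$ for $t>0$, and these dilations commute with taking polynomial hulls. Local polynomial convexity at the origin gives an $r>0$ with $X\cap\overline{B(0,r)}$ polynomially convex. Dilating, every $X\cap\overline{B(0,R)}$ is polynomially convex, and these sets exhaust $X$. Hence $X$ is polynomially convex.

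\emph{Step 2: bounded $E$-hulls.} Fix a compact $K\subset\overline{B(0,\rho)}$. We must show that $\widehat{K\cup X_R}\setminus (K\cup X)$ stays in a fixed ball, uniformly in $R$, where $X_R=X\cap\overline{B(0,R)}$. The plan is to construct a plurisubharmonic exhaustion-type function adapted to $X$. Away from the origin, each $L_j\setminus\{0\}$ is totally real and the $L_j$ are pairwise disjoint there. Write $\psi_j$ for the squared distance to $L_j$; in Weinstock's coordinates, with $L_j=(A_j+iI)\R^n$, this is a nonnegative quadratic form that is strictly psh. Our model is Manne's argument for two planes, which we adapt. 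Using local polynomial convexity at $0$ and homogeneity, we look for a function $\Phi$ with the following properties: $\Phi$ is continuous and psh on $\C^n$, $\Phi=0$ on $X$, $\Phi>0$ off $X$, and $\Phi$ is homogeneous of degree $2$. Such a function exists near $0$ for a polynomially convex compact set that is locally a union of totally real pieces. Concretely, we take a psh defining function on a ball (via a Stolzenberg/Kallin-type result or the standard characterization of polynomially convex sets through psh functions that vanish exactly on them). We then make it homogeneous by taking the regularized supremum of $t^{-2}\Phi(tz)$, or by averaging over dilations. With $\Phi$ in hand, $\Phi+\epsilon|z|^2$ lets us separate points: a point $z$ with $|z|$ large and $\Phi(z)\ge c|z|^2$ is excluded from the hull by a maximum-principle argument against $K\cup X_R$. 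The remaining region is the conical neighborhood $\{\Phi<c|z|^2\}$ of $X$, which splits into disjoint cones around the individual $L_j$ once $|z|\gg\rho$. There we use that each $L_j$ is totally real and Carleman: near $L_j$ at large distance, $K$ is not seen, and the hull locally coincides with $L_j$ by a local polynomial convexity argument for totally real manifolds.

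\emph{Main obstacle.} Step 2 is the hard part. The delicate issue is producing the globally defined, dilation-homogeneous psh function $\Phi$ from the mere local polynomial convexity at the origin, and making the maximum-principle estimate uniform in $R$ so that the hull of $K\cup X_R$ cannot escape along the cones. As a fallback we would argue by contradiction. Suppose points $z_k$ lie in $\widehat{K\cup X_{R_k}}\setminus(K\cup X)$ with $|z_k|\to\infty$. Rescaling by $1/|z_k|$ sends $K$ into shrinking balls around $0$. Representing measures (Jensen measures) for $z_k/|z_k|$ would then converge to a Jensen measure on $X$ for a point of the unit sphere off $X$, or on $X$ but not a point mass. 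Either outcome contradicts the polynomial convexity of $X\cap\overline{B}$ together with the fact that points of totally real $X\setminus\{0\}$ admit only trivial Jensen measures.
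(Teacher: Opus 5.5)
Your reduction to the Magnusson--Wold characterization and your Step~1 are correct. $X$ is stratified totally real with $X_0=\{0\}$, and dilation invariance upgrades local polynomial convexity at $0$ to polynomial convexity of every $X\cap\overline{B(0,R)}$. The gap is Step~2. In this formulation it is the entire content of the theorem, since a Carleman set automatically has bounded $E$-hulls by the other half of Magnusson--Wold. The paper deliberately avoids it, remarking that it is unclear how to decide whether this union has bounded $E$-hulls.

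Your construction of the homogeneous $\Phi$ does not work as described. Let $u\ge 0$ be psh with $u^{-1}(0)=X\cap\overline{B(0,r)}$. Then $u>0$ on $X\setminus\overline{B(0,r)}$, so $\sup_{t>0}t^{-2}u(tz)$ is strictly positive (possibly $+\infty$) at every point of $X\setminus\{0\}$. The other options fail too:
\begin{itemize}
\item Restricting to $t<r/|z|$ makes the index set depend on $z$, and a supremum over a $z$-dependent family need not be psh.
\item For smooth $u$, the limit $t\to 0^+$ gives the Hessian form of $u$ at $0$. This is a nonnegative real quadratic form vanishing on $X$, hence on its real span. For $m\ge 2$ that span is $L_1+L_2=\C^n$ by dimension count, since $L_1\cap L_2=\{0\}$, so the form is identically zero.
\item Averaging against the dilation-invariant measure $dt/t$ diverges.
\end{itemize}
So the existence of a continuous psh $\Phi\ge 0$, homogeneous of degree $2$, with $\Phi^{-1}(0)=X$ is an unproved claim. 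It is essentially a scale-invariant form of the very estimate you need.

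Even granting $\Phi$, the maximum principle only confines $\widehat{K\cup X_R}$ to $\{\Phi\le\max_K\Phi\}$, an unbounded neighbourhood of $X$. Excluding hull points there, near some $L_j$ far out, is not a local question. Polynomial hulls are global, and local polynomial convexity of $L_j$ at its points says nothing about points of $\widehat{K\cup X_R}$ accumulating on $L_j$ from outside. For a single plane $\R^n$ one uses a global barrier such as $|\Im z|^2-\epsilon\,\Re\sum_l (z_l-x_{0,l})^2$. For several planes this function is positive on parts of the other $L_k$, so it would have to be glued to $\Phi$ with control uniform in $R$; none of this is done.

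The Jensen-measure fallback also fails, for two reasons.
\begin{itemize}
\item Since $R_k$ is arbitrary, the rescaled supports $K/|z_k|\cup X_{R_k/|z_k|}$ are not uniformly bounded. The measures need not be tight, and no limiting probability measure need exist.
\item When a limit does exist, it is $\delta_w$ for some $w\in X$ on the unit sphere, because $\mathcal P(X_{R'})=\mathcal C(X_{R'})$. This is perfectly consistent with $z_k/|z_k|\notin X$ and $z_k/|z_k|\to w$. Your dichotomy omits exactly this case, which is the real difficulty: hull points escaping to infinity along one of the planes.
\end{itemize}

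The paper bypasses $E$-hulls altogether with Manne's method. For $f$ supported in the shell $1\le|z|\le R$, it builds entire approximants on $X\cap\overline{B(0,R)}$ that are also small on a fixed ball $B(0,\delta)$. It does this with Gaussian convolution on each plane and a Cousin~I splitting, controlled by the Fr\'echet open mapping theorem on a Stein neighbourhood. Dilation invariance makes $\delta$ scale-free, and an inductive sum over shells gives the Carleman approximant.
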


\begin{remark}
    For $j=1,\dots, m$; since $L_j$ is a linear subspace, local polynomial convexity of the union at the origin implies that any compact subset of $\bigcup_{j=1}^m L_j$ is polynomially convex. In turn, this implies that the closed set $\bigcup_{j=1}^m L_j$ is polynomially convex.
\end{remark}
In general, characterizing polynomial convexity of union of finitely many totally real subspaces is difficult. 
However, an open set of three tuples of totally real planes whose union is polynomially convex is given by Gorai \cite{goraithreeplanes}.
For union of finitely many totally real planes there are some classes where polynomial convexity is shown \cite{SG2014}. Using these polynomial convexity results we now provide some corollaries of \Cref{thm:main} which gives some concrete situations where the Carleman approximation holds. For that we need a definition due to Florentino \cite{Floren2009}: {\em A sequence $\mathscr{A}=\{A_1,\dots, A_m\}$ of $2\times 2$ matrices with real entries is said to be reduced if there are no commuting pairs among its terms. A subsequence $\mathscr{B}\subset \mathscr{A}$ is called a reduction of $\mathscr{A}$ if $\mathscr{B}$ is reduced and is obtained from $\mathscr{A}$ by deleting some of its terms. The reduced length of $\mathscr{A}$ is the greatest positive integer $l$ such that there exists a reduction of cardinality $l$.}

\begin{corollary}\label{coro-realevfinite}
    Let $A_j\in\rea^{2\times 2}$, $j=1,\dots, m$ be such that for each $j=1,\dots, N$ $\sigma(A_j)\subset\rea$ and
    $\det[A_j,A_k]=0$ for all $1\leq j<k\leq m$. Let $l$ be the reduced length of $\{A_1,\dots, A_m\}$. If $l=3$ then assume $Tr (ABC-CBA)=0$ for all $A,B,C$ lies in a maximal reduction. Then $\bigcup_{j=1}^mM(A_j)\cup\rea^2$ allows the Carleman approximation.
\end{corollary}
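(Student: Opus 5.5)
The corollary follows from \Cref{thm:main} once the hypotheses of that theorem are checked for the family $\rea^2, M(A_1),\dots,M(A_m)$ in $\cplx^2$. Two things are needed: pairwise transversality of these maximal totally real planes, and local polynomial convexity of their union at the origin. The Carleman property then follows immediately.

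\textbf{Step 1: transversality.} I will use Weinstock's description of the planes. First, $\rea^2\cap M(A_j)=\{0\}$ holds because $(A_j+iI)x$ is real only when $x=0$, given that $x$ is real. Second, $M(A_j)\cap M(A_k)=\{0\}$ is equivalent to $A_j-A_k$ being invertible, which can be seen by equating real and imaginary parts of $(A_j+iI)x=(A_k+iI)y$. So transversality is part of the standing setup: I read the corollary as implicitly assuming that the planes $M(A_j)$ are distinct and pairwise transverse, as in the paragraph preceding \Cref{thm:main}. If needed, this is stated explicitly as the standing hypothesis $\det(A_j-A_k)\neq 0$.

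\textbf{Step 2: polynomial convexity.} This is the main input, and I will take it from the polynomial convexity results for unions of finitely many totally real planes \cite{SG2014}, building on Florentino's treatment of reduced sequences \cite{Floren2009}. The relevant result says the following. Suppose each $A_j$ has real spectrum and all commutators have $\det[A_j,A_k]=0$. Then $\rea^2\cup\bigcup_j M(A_j)$ is locally polynomially convex at $0$ whenever the reduced length is at most $2$, and also when it equals $3$ provided the trace condition $\operatorname{Tr}(ABC-CBA)=0$ holds on a maximal reduction.

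The mechanism I expect behind that result is a simultaneous change of coordinates. The condition $\det[A_j,A_k]=0$ forces the commutators to be nilpotent. Together with real spectra, this lets one simultaneously triangularize by a real conjugation, provided the trace condition holds in the length-$3$ case, since it ensures a common real eigenvector. Conjugating by a real matrix $P$ maps $\rea^2$ to itself and $M(A)$ to $M(PAP^{-1})$. The union then sits inside a set of the form $\{(z,w): w\in \text{union of lines in } \cplx\}$ fibred over real-type data. Polynomial convexity then follows from a Kallin-type or fibration argument using real eigenvalues, since there are no purely imaginary eigenvalues.

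\textbf{Step 3: conclusion.} Local polynomial convexity is obtained at the origin, and by homogeneity, as in the Remark after \Cref{thm:main}, this extends to global polynomial convexity. Applying \Cref{thm:main} to $L_0=\rea^2,L_j=M(A_j)$ then gives Carleman approximation.

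The main obstacle is Step 2, specifically the length-$3$ case. There, simultaneous triangularizability is not automatic from pairwise nilpotent commutators, and the trace identity is exactly what must be fed in. I would first try to cite the statement from \cite{SG2014} verbatim. If its hypotheses differ slightly, I would verify directly that $\operatorname{Tr}(ABC-CBA)=0$ together with pairwise $\det[\cdot,\cdot]=0$ yields a common real eigenvector for $A,B,C$. By the definition of a maximal reduction, every other $A_j$ commutes with some member of the reduction, and I would use this to propagate the common eigenvector to every $A_j$.
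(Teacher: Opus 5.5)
Your proposal is correct and is essentially the paper's argument. The paper gives no separate proof: it takes local polynomial convexity of $\rea^2\cup\bigcup_j M(A_j)$ from \cite{SG2014}, treats pairwise transversality as part of the standing setup before \Cref{thm:main}, and then applies \Cref{thm:main}. Your sketched mechanism behind \cite{SG2014} also works: after simultaneous real triangularization, transversality forces the $(2,2)$-entries $\mu_j$ to be pairwise distinct, so the projections $(\mu_j+i)\rea$ to $z_2$ are distinct lines, each nonzero fiber is a single line, and the fibration argument goes through.
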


\begin{corollary}\label{coro-nonrealevfinite}
    Let $A_j\in\rea^{2\times 2}$, $j=1,\dots, m$ be such that  $\sigma(A_j)\subset\cplx\setminus\rea$ for each $j$ and
    $\det[A_j,A_k]=0$ for all $1\leq j<k\leq m$. $\bigcup_{j=1}^mM(A_j)\cup\rea^2$ allows the Carleman approximation
    if and only if there is no pair $(j,k), j\neq k, 0\leq j<k\leq m$, satisfying $V_j=cV_k$ for some $c>0$, where $V_j= \left(|\lambda_j|^2-1,2\rl(\lambda_j)\right)$, $\lambda_j\in \sigma(A_j)$ for all $j\geq 1$, $V_0=(1,0)$. 
\end{corollary}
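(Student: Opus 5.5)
The plan is to reduce \Cref{coro-nonrealevfinite} to \Cref{thm:main} together with a polynomial convexity classification for unions of totally real planes of the form $M(A_j)$ with commuting-type conditions, taken from \cite{SG2014}. Necessity is handled by the general fact that a Carleman set must be polynomially convex. Each $M(A_j)$ with $(A_j+iI)$ invertible is a maximal totally real subspace, so the sets in the union are maximal totally real subspaces of $\C^2$. The first step is to check pairwise transversality. For $\rea^2\cap M(A_j)$ this holds because $\sigma(A_j)\cap\rea=\varnothing$: a real vector $(A_j+iI)x$ forces $x=0$. For $M(A_j)\cap M(A_k)$ we use $\det[A_j,A_k]=0$ for $2\times 2$ matrices with non-real spectrum. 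This condition forces $A_k$ to lie in the real span of $I$ and $A_j$, i.e. the two matrices commute and are simultaneously conjugate to $\begin{pmatrix} a&-b\\ b&a\end{pmatrix}$ forms. In that common normal form, $M(A)$ becomes an explicit plane. After a real linear change of coordinates preserving $\rea^2$, the plane $M(A)$ with eigenvalue $\lambda$ can be identified with a totally real plane of the shape $\{w=\mu\bar z\}$-type, determined by $\lambda$. Intersections are then trivial unless the eigenvalues coincide (or are conjugate), which is excluded whenever the planes are distinct.

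The second step is to invoke the polynomial convexity criterion of \cite{SG2014}. In the simultaneous normal form, the union $\rea^2\cup\bigcup_j M(A_j)$ is polynomially convex if and only if no two of the vectors $V_j=(|\lambda_j|^2-1,2\rl\lambda_j)$, $j\ge 0$ (with $V_0=(1,0)$ for $\rea^2$), are positive multiples of each other. The two-plane case recovers Weinstock's criterion: $V_j=cV_0$ means $\rl\lambda_j=0$ and $|\lambda_j|>1$. Since all sets are linear, they are invariant under real dilations $z\mapsto tz$. Hence polynomial convexity of the union is equivalent to local polynomial convexity at the origin, as in the Remark following \Cref{thm:main}. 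Sufficiency then follows directly from \Cref{thm:main}.

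For necessity, if some $V_j=cV_k$ with $c>0$, the cited result (or a direct construction of an analytic disc) shows that the union is not polynomially convex. The disc has boundary in $M(A_j)\cup M(A_k)$ (resp. $\rea^2$), obtained as in Weinstock's two-plane argument after conjugating one plane to $\rea^2$ by a $\C$-linear map. Carleman sets are polynomially convex, since uniform approximation on compacts by entire functions forces compact subsets to be hull-equal. So the union fails Carleman approximation.

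The main obstacle is the normal-form step: verifying that $\det[A_j,A_k]=0$ with non-real spectra yields simultaneous conjugation, so that the classification in \cite{SG2014} applies exactly with the vectors $V_j$. I would first prove the $2\times2$ lemma that $\det[A,B]=0$ for $A$ with non-real spectrum implies $[A,B]=0$. The argument: $[A,B]$ is traceless, so $\det[A,B]=0$ makes it nilpotent, and a nilpotent commutator with an elliptic $A$ must vanish. With that lemma, the identification of the vectors $V_j$ reduces to a direct computation.
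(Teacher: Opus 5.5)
Your proposal is correct and is essentially the argument the paper intends, though the paper gives no written proof and just cites \cite{SG2014} alongside \Cref{thm:main}. That criterion characterizes local polynomial convexity of $\rea^2\cup\bigcup_j M(A_j)$ by the condition on the $V_j$, \Cref{thm:main} then gives sufficiency, and necessity follows because a Carleman set must be polynomially convex. Two harmless slips: $\rea^2\cap M(A_j)=\{0\}$ holds for every real $A_j$ since $\Im((A_j+iI)x)=x$, and conjugate eigenvalues do not destroy transversality ($M(A_j)\cap M(A_k)\neq\{0\}$ iff $A_j-A_k$ is singular, which in the common normal form means $A_j=A_k$), though in the sufficiency direction $V_j\neq V_k$ rules out coinciding and conjugate eigenvalues anyway.
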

There are several situation when the union of three pairwise transverse totally real planes allows the Carleman approximation. Local polynomial convexity of such unions are described in \cite{goraithreeplanes} and \cite{gorai2019certain}. To mention these as corollaries we need few notations from \cite{goraithreeplanes} and \cite{gorai2019certain}. For a pair of $2\times 2$ matrices $(A_1,A_2)$ we consider the functions: 
\begin{align*}
\Theta(A_1,A_2) &= \det A_1 (Tr A_2)^2+ Tr A_1A_2 (Tr A_1A_2-Tr A_1 Tr A_2).\\
\Lambda (A_1,A_2)&= a\det A_1A_2 - \dfrac{1}{4}(Tr A_1 Tr A_2)^2.\\
\beta (A_1,A_2)&= 4\det A_1A_2 - Tr A_1A_2(Tr A_1A_2-Tr A_1 Tr A_2)
-\dfrac{1}{4}(Tr A_1 Tr A_2)^2.
\end{align*}

\begin{corollary}\label{coro-3planes-real-ev}
Let $A_1,A_2\in\rea^{2\times 2}$ such that $\sigma(A_j)\subset\rea\;\forall j$ and $\sigma((A_1A_2+\id)(A_1-A_2)^{-1})\cap \{it\in\cplx: |t|\geq 1\}=\varnothing$. $M(A_1)\cup M(A_2)\cup\rea^2$ allows the Carleman approximation if one of the following holds:
\begin{itemize}
    \item [(i)] $\det A_j\det[A_1,A_2]>0 \;\forall j=1,2.$
    \item[(ii)] $\det A_j\det[A_1,A_2]<0$ and $\det A_j \Theta(A_j,A_{j^C})<0$ for some $j$ 
\end{itemize}
    \end{corollary}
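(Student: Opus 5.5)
The plan is to obtain this corollary from \Cref{thm:main} by checking its hypotheses for $L_0=\rea^2$, $L_1=M(A_1)$, $L_2=M(A_2)$. That theorem needs two inputs: the three planes are pairwise transverse maximal totally real subspaces of $\cplx^2$, and their union is locally polynomially convex at the origin. Once both are in hand, Carleman approximation follows at once. Each $M(A_j)=(A_j+i\id)\rea^2$ is a maximal totally real subspace, since $A_j+i\id$ is invertible (because $\sigma(A_j)\subset\rea$).

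\emph{Transversality.} For $L_0\cap M(A_j)$: a vector $(A_j+i\id)x$ with $x\in\rea^2$ is real only if $x=0$. So $\rea^2\cap M(A_j)=\{0\}$. For $M(A_1)\cap M(A_2)$: if $(A_1+i\id)x=(A_2+i\id)y$ with $x,y$ real, then comparing imaginary parts gives $x=y$, and comparing real parts gives $(A_1-A_2)x=0$. The spectral hypothesis is stated in terms of $(A_1-A_2)^{-1}$, so $A_1-A_2$ is implicitly invertible. Hence $x=0$, and the three planes are pairwise transverse.

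\emph{Local polynomial convexity.} I will invoke Gorai's results on three totally real planes \cite{goraithreeplanes, gorai2019certain}. After a $\cplx$-linear change of coordinates sending $M(A_1)$ to $\rea^2$, the pair $M(A_1)\cup M(A_2)$ becomes $\rea^2\cup M(B)$ with $B=(A_1A_2+\id)(A_1-A_2)^{-1}$, up to sign and similarity conventions. The hypothesis that $\sigma(B)$ avoids $\{it:|t|\ge1\}$ is then exactly Weinstock's condition for the pair $M(A_1)\cup M(A_2)$ \cite{weinstock1988polynomial}. The pairs $\rea^2\cup M(A_j)$ are automatically Weinstock-polynomially convex, since $\sigma(A_j)\subset\rea$. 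Under these pairwise conditions, Gorai's theorem for real spectra states that the triple union is locally polynomially convex at $0$ provided either (i) $\det A_j\det[A_1,A_2]>0$ for both $j$, or (ii) $\det A_j\det[A_1,A_2]<0$ and $\det A_j\,\Theta(A_j,A_{j^C})<0$ for some $j$. I will cite that result in this form. Combining it with \Cref{thm:main} (and the accompanying remark that local convexity at the origin gives convexity of the closed union) yields the corollary.

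\emph{Main obstacle.} The delicate step is matching conventions precisely: showing that the change of coordinates indeed produces the matrix $(A_1A_2+\id)(A_1-A_2)^{-1}$, and that the sign conditions in (i) and (ii) coincide with those in Gorai's papers, which may be normalized differently (for example, with $\det[A_1,A_2]\neq0$ assumed there). If the conventions differ, I will recompute. Write $(A_2+i\id)=(A_1+i\id)(\id+iC)^{-1}$-type factorizations to identify the Weinstock matrix of the pair, and then verify that the invariants $\det A_j$, $\det[A_1,A_2]$ and $\Theta$ transform compatibly.
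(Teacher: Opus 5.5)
Your proposal is correct and follows the paper's approach. The paper states this corollary without a written proof, as a direct consequence of feeding the local polynomial convexity results of \cite{goraithreeplanes, gorai2019certain} into \Cref{thm:main}. Your explicit checks are sound details that the paper leaves unstated: the pairwise transversality, and the identification of $(A_1A_2+\id)(A_1-A_2)^{-1}$ (up to similarity and sign) with the Weinstock matrix of $M(A_1)\cup M(A_2)$.
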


    \begin{corollary}
        Let $A_1,A_2\in\rea^{2\times 2}$ such that $\sigma(A_1)\subset\rea$, 
        $\sigma(A_2)\subset\cplx\setminus(\rea\cup\{it\in\cplx: |t|\geq 1\})=\varnothing$ and $\sigma((A_1A_2+\id)(A_1-A_2)^{-1})\cap \{it\in\cplx: |t|\geq 1\}=\varnothing$. $M(A_1)\cup M(A_2)\cup\rea^2$ allows the Carleman approximation if one of the following holds: 
        \begin{itemize}
            \item [(i)] $\det A_1\det[A_1,A_2]<0$ and $\det A_1 \Theta(A_1,A_2)<0$.
            \item[(ii)] $\Theta(A_1,A_2)<\Lambda(A_1,A_2)$. 
        \end{itemize}
    \end{corollary}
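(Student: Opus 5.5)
The plan is to deduce the corollary from \Cref{thm:main}. That theorem needs two inputs: pairwise transversality of the three planes $\rea^2$, $M(A_1)$, $M(A_2)$, and local polynomial convexity of their union at the origin. Polynomial convexity will not be proved from scratch. I will quote the sufficient conditions for three totally real planes with one real-spectrum matrix and one non-real-spectrum matrix from \cite{gorai2019certain} (and \cite{goraithreeplanes} for case (i)). The work is to check that the hypotheses of the corollary are exactly the hypotheses of those results.

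\emph{Step 1: transversality.} First I verify pairwise transversality.
\begin{itemize}
\item $M(A_j)\cap\rea^2=\{0\}$. If $(A_j+i\id)x$ is real with $x\in\rea^2$, then its imaginary part $x$ vanishes.
\item $M(A_1)\cap M(A_2)=\{0\}$. The relation $(A_1+i\id)x=(A_2+i\id)y$ gives $x=y$ from the imaginary parts and then $(A_1-A_2)x=0$ from the real parts. So invertibility of $A_1-A_2$ suffices, and it is implicitly assumed since $(A_1-A_2)^{-1}$ appears in the hypothesis.
\end{itemize}
Each $M(A_j)$ is maximal totally real because $A_j+i\id$ is invertible. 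This holds since $A_j$ is real, so $\pm i\notin\sigma(A_j)$: for $j=1$ because $\sigma(A_1)\subset\rea$, and for $j=2$ because eigenvalues $it$ with $|t|\ge1$ are excluded.

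\emph{Step 2: pairwise polynomial convexity.} Next I verify that each of the three pairwise unions is polynomially convex, using Weinstock's criterion \cite{weinstock1988polynomial}.
\begin{itemize}
\item For $\rea^2\cup M(A_1)$ this is immediate because $\sigma(A_1)$ is real.
\item For $\rea^2\cup M(A_2)$ it follows from the hypothesis excluding purely imaginary eigenvalues of modulus $\ge1$.
\item For $M(A_1)\cup M(A_2)$, apply the $\C$-linear change of coordinates sending $M(A_1)$ to $\rea^2$. A short computation, as in \cite{goraithreeplanes}, shows that $M(A_2)$ becomes $M(B)$ with $B$ similar, up to sign conventions, to $(A_1A_2+\id)(A_1-A_2)^{-1}$. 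The spectral hypothesis on this matrix is then precisely Weinstock's condition for this pair.
\end{itemize}
These pairwise conditions are the standing assumptions in the three-plane theorems of \cite{goraithreeplanes, gorai2019certain}.

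\emph{Step 3: local polynomial convexity of the triple union.} Under conditions (i) or (ii), the three-plane results apply: case (i) via the $\det A_1\det[A_1,A_2]<0$, $\det A_1\,\Theta<0$ criterion, and case (ii) via the criterion $\Theta(A_1,A_2)<\Lambda(A_1,A_2)$ for the mixed-spectrum case. These results yield that $\rea^2\cup M(A_1)\cup M(A_2)$ is locally polynomially convex at $0$. \Cref{thm:main} with $m=3$ then gives Carleman approximation.

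The main obstacle is Step 3's bookkeeping. The normalizations in \cite{gorai2019certain} may place the real-spectrum matrix differently, or use the conjugated or transformed pair. So I must check that $\Theta$ and $\Lambda$ are evaluated on the same ordered pair, after possibly relabelling or applying the coordinate change of Step 2. The identity for the transformed matrix in Step 2 also needs to be confirmed by direct computation.
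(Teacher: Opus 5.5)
Your proposal is correct and takes the same route as the paper. The paper gives no separate proof; it obtains this corollary by feeding the three-plane local polynomial convexity criteria of \cite{goraithreeplanes, gorai2019certain} into \Cref{thm:main} with $m=3$. Your checks of transversality and of the pairwise Weinstock conditions make explicit what the paper leaves implicit, and no ``sign conventions'' caveat is needed in Step 2: the $\C$-linear map $z\mapsto(A_1-i\id)z$ sends $M(A_1)$ onto $\rea^2$ and $M(A_2)$ exactly onto $M\bigl((A_1A_2+\id)(A_1-A_2)^{-1}\bigr)$.
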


    \begin{corollary}
       Let $A_1,A_2\in\rea^{2\times 2}$ such that $\sigma(A_j)\subset\cplx\setminus(\rea\cup\{it\in\cplx: |t|\geq 1\})\;\forall j$ and $\sigma((A_1A_2+\id)(A_1-A_2)^{-1})\cap \{it\in\cplx: |t|\geq 1\}=\varnothing$. $M(A_1)\cup M(A_2)\cup\rea^2$ allows the Carleman approximation if 
       $\Theta(A_1,A_2)<\Lambda(A_1,A_2)$.
    \end{corollary}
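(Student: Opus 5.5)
The last corollary will follow by feeding \Cref{thm:main} a local polynomial convexity result for three totally real planes. Put $L_0=\rea^2$, $L_1=M(A_1)$, $L_2=M(A_2)$.

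\emph{Maximality and transversality.} Since $A_j+iI$ is invertible, each $L_j$ is a maximal totally real subspace. For $L_0\cap L_j=\{0\}$ we use that $A_j$ has no real eigenvalue: if $(A_j+iI)x$ is real for some real $x$, then $x=0$.

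For $L_1\cap L_2=\{0\}$, apply the $\C$-linear map $(A_1+iI)^{-1}$, which sends $L_1$ to $\rea^2$. A standard computation gives
\[
(A_1+iI)^{-1}(A_2+iI)\rea^2 = M(B), \qquad B=(A_1A_2+\id)(A_1-A_2)^{-1}
\]
up to sign and conjugation conventions. I expect the first step to be checking that $A_1-A_2$ is invertible. This follows from the hypothesis that $\sigma(B)$ is defined. It may also need the hypothesis that $B$ has no real eigenvalues; I will check this against the convention in \cite{goraithreeplanes}.

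\emph{Local polynomial convexity at the origin.} The hypothesis that $\sigma(A_j)$ avoids $\{it:|t|\geq1\}$ gives, by Weinstock \cite{weinstock1988polynomial}, that $\rea^2\cup M(A_j)$ is polynomially convex. The hypothesis on $\sigma(B)$ gives, again by Weinstock applied after the linear change of coordinates above, that $M(A_1)\cup M(A_2)$ is polynomially convex.

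With these pairwise conditions and the inequality $\Theta(A_1,A_2)<\Lambda(A_1,A_2)$ for two matrices with non-real spectra, I invoke the three-plane result of \cite{goraithreeplanes}/\cite{gorai2019certain}. That result gives local polynomial convexity at the origin of $\rea^2\cup M(A_1)\cup M(A_2)$. Its proof, as I recall it, constructs a nonnegative plurisubharmonic quadratic form vanishing exactly on the union, or a real-valued polynomial map that separates the planes and reduces to Kallin's lemma.

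\emph{Conclusion.} By the Remark after \Cref{thm:main}, homogeneity upgrades local polynomial convexity to polynomial convexity of the closed union. \Cref{thm:main} then yields Carleman approximation.

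The main obstacle is matching hypotheses exactly with the cited theorem, since the cited result may be phrased with different normalisations: possibly $\Theta<\Lambda$ under $\det[A_1,A_2]\neq0$, or with $\Lambda$ carrying the constant $a$. If a normalisation differs, I will conjugate $A_1,A_2$ simultaneously by a real matrix; this preserves the configuration up to $\C$-linear equivalence and leaves traces and determinants invariant. If the cited result is unavailable in the needed generality, I will fall back on Kallin's lemma. I will use the polynomial $p(z)=z_1^2+z_2^2$ (adapted to $\rea^2$) to map the union onto three sectors in $\C$, and check that $\Theta<\Lambda$ exactly makes the image sectors of $M(A_1)$ and $M(A_2)$ lie in half-planes meeting only at $0$.
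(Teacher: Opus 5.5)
Your proposal is correct and follows the paper's own route: the paper gives no separate proof, and obtains the corollary by feeding the three-plane local polynomial convexity result of \cite{goraithreeplanes}, \cite{gorai2019certain} into \Cref{thm:main}, with the stated spectral conditions supplying the pairwise polynomial convexity hypotheses via Weinstock. Your hedging on transversality is unnecessary: $\rea^2\cap M(A_j)=\{0\}$ for every real $A_j$, since the imaginary part of $(A_j+iI)x$ is $x$, and $M(A_1)\cap M(A_2)=\{0\}$ holds exactly when $\det(A_1-A_2)\neq 0$, with no eigenvalue condition on $B$ required.
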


    \begin{corollary}
        Let $A_1,A_2\in\rea^{2\times 2}$ such that $\sigma(A_j)\subset\cplx\setminus \{it\in\cplx: |t|\geq 1\}\;\forall j$ and $\sigma((A_1A_2+\id)(A_1-A_2)^{-1})\cap \{it\in\cplx: |t|\geq 1\}=\varnothing$. Assume $\det [A_1, A_2]>0$.
        $M(A_1)\cup M(A_2)\cup\rea^2$ allows the Carleman approximation if one of the following holds: 
        \begin{itemize}
            \item [(i)] $\det A_1=0$ and $\det A_2\geq 0$.
            \item[(ii)] $|\det A_j|\leq 1$ and $\det A_j<0$ for $j=1,2$.
            \item[(iii)] $\det A_j<0$ for $j=1,2$, and 
            $\beta(A_1,A_2)>\min \{\det A_2(Tr A_1)^2, \det A_1 (Tr A_2)^2\}.$  
        \end{itemize}
    \end{corollary}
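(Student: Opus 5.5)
The plan is to deduce this corollary from \Cref{thm:main}, exactly as for the preceding corollaries. We take $L_0=\rea^2$, $L_1=M(A_1)$ and $L_2=M(A_2)$. Two things must be checked: the three planes are maximal totally real and pairwise transverse, and $L_0\cup L_1\cup L_2$ is locally polynomially convex at the origin. Once both hold, \Cref{thm:main} gives Carleman approximation directly.

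\emph{Step 1: the planes and their pairwise intersections.} Since $A_j$ is real, $A_j+i\id$ is invertible. Hence $M(A_j)$ is a real $2$-plane, and it is totally real because $M(A_j)\cap iM(A_j)=\{0\}$.

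For the intersection with $\rea^2$: if $(A_j+i\id)x$ is real, then its imaginary part $x$ vanishes, so $M(A_j)\cap\rea^2=\{0\}$.

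For $M(A_1)\cap M(A_2)$: the equation $(A_1+i\id)x=(A_2+i\id)y$ with $x,y$ real gives $x=y$ from the imaginary parts. The real parts then give $(A_1-A_2)x=0$. So transversality of $L_1$ and $L_2$ is equivalent to invertibility of $A_1-A_2$. This is implicit in the hypothesis, since $(A_1-A_2)^{-1}$ appears there.

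\emph{Step 2: reduce to the local polynomial convexity results.} The eigenvalue hypothesis on each $A_j$ (no purely imaginary eigenvalue of modulus $\geq 1$) is precisely Weinstock's condition. It makes each pair $\rea^2\cup M(A_j)$ polynomially convex.

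For the pair $M(A_1)\cup M(A_2)$, we apply the $\cplx$-linear map $(A_1+i\id)^{-1}$, which sends $M(A_1)$ to $\rea^2$ and $M(A_2)$ to $M(B)$ for a suitable real matrix $B$. A short computation shows $B$ is similar to $(A_1A_2+\id)(A_1-A_2)^{-1}$, up to a sign or inversion that does not affect the condition. So the hypothesis on $\sigma((A_1A_2+\id)(A_1-A_2)^{-1})$ is Weinstock's condition for this pair. I will check this normalization against \cite{goraithreeplanes}, \cite{gorai2019certain}, where the same expression appears.

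With all pairwise conditions in place, together with $\det[A_1,A_2]>0$, the three-plane results of \cite{gorai2019certain} (and \cite{goraithreeplanes}) state that $\rea^2\cup M(A_1)\cup M(A_2)$ is locally polynomially convex at $0$ under each of the alternatives (i), (ii), (iii). I will quote the corresponding theorems case by case, matching (i) $\det A_1=0,\ \det A_2\ge0$, (ii) $-1\le\det A_j<0$, and (iii) the inequality on $\beta$ with the statements there.

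\emph{Step 3: conclude.} \Cref{thm:main} applies with $m=3$, and its Remark upgrades local polynomial convexity to polynomial convexity of the closed union, so Carleman approximation follows. I expect the only real obstacle to be bookkeeping. The hypotheses in the cited papers must be literally matched: sign conventions for $M(A)$, which of $A_1,A_2$ plays which role in $\beta$ and $\min\{\cdot\}$, and whether those results assume transversality or the eigenvalue condition on $(A_1A_2+\id)(A_1-A_2)^{-1}$ separately. If a convention differs, I would first conjugate by a real linear change of coordinates on $\rea^2$ (which replaces each $A_j$ by a similar matrix and preserves determinants, traces and $\det[A_1,A_2]$) to bring the configuration into the cited form.
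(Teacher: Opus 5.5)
Your proposal matches the paper's implicit argument. The paper gives no separate proof: it quotes the local polynomial convexity of $\rea^2\cup M(A_1)\cup M(A_2)$ at the origin under (i)--(iii) from \cite{goraithreeplanes}, \cite{gorai2019certain} and then applies \Cref{thm:main}, with the pairwise hypotheses being the Weinstock conditions as you identify.

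One correction: $A_j+i\id$ is not invertible merely because $A_j$ is real, since a rotation by $\pi/2$ has eigenvalue $-i$. Invertibility comes from the hypothesis that $\sigma(A_j)$ avoids $\{it: |t|\geq 1\}$, which excludes $-i$.
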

To investigate this deeper for three maximal totally real subspaces:
We will look at the polynomial convexity of three totally real planes in $\C^2$ in
\Cref{thm:three-totally-real-subspaces-part-1} and \Cref{thm:three-totally-real-subspaces-part-2}. Using these we have the following corollaries of \Cref{thm:main}.

\begin{corollary}
    Let $A_1,A_2\in\rea^{2\times 2}$ such that $\sigma((A_1A_2+\id)(A_1-A_2)^{-1})\cap \{it\in\cplx: |t|\geq 1\}=\varnothing$. Assume $\det [A_1, A_2]<0$ and $\det A_j<0$ for $j=1,2$. Then $M(A_1)\cup M(A_2)\cup\rea^2$ allows the Carleman approximation.
\end{corollary}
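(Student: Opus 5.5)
The corollary follows from \Cref{thm:main} once we know the hypotheses force local polynomial convexity of $\rea^2\cup M(A_1)\cup M(A_2)$ at the origin. The plan is to verify the hypotheses of \Cref{thm:main} and then invoke the three-plane polynomial convexity result stated later in the paper, \Cref{thm:three-totally-real-subspaces-part-1} or \Cref{thm:three-totally-real-subspaces-part-2}. The case $\det[A_1,A_2]<0$, $\det A_j<0$ is presumably one of the new cases treated there.

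\textbf{Step 1: transversality.} $M(A_j)\cap\rea^2=\{0\}$ because $(A_j+i\id)x$ is real only when $x=0$. For the pair $M(A_1),M(A_2)$, a common point means $(A_1+i\id)x=(A_2+i\id)y$. Comparing imaginary parts gives $x=y$, and then real parts give $(A_1-A_2)x=0$. Now $\det[A_1,A_2]\neq 0$ forces $A_1\neq A_2$. We need $A_1-A_2$ invertible, and this is implicitly assumed in the hypothesis, since $(A_1-A_2)^{-1}$ appears in it. So the three planes are pairwise transverse maximal totally real subspaces.

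\textbf{Step 2: polynomial convexity of each pair.} The pair $\rea^2\cup M(A_j)$ is handled by Weinstock's criterion. Since $\det A_j<0$, the eigenvalues of $A_j$ are real of opposite signs, so there is no purely imaginary eigenvalue. For the pair $M(A_1)\cup M(A_2)$, apply the $\C$-linear map $(A_1+i\id)^{-1}$, which sends $M(A_1)$ to $\rea^2$. A direct computation shows that $M(A_2)$ is then sent to $M(B)$ with $B=(A_1A_2+\id)(A_1-A_2)^{-1}$, up to conjugation or sign conventions. The spectral hypothesis is exactly Weinstock's condition for this $B$. So all pairwise unions are polynomially convex, which is the standing assumption of the three-plane theorems.

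\textbf{Step 3: the union of three planes.} Apply \Cref{thm:three-totally-real-subspaces-part-1} (or part 2) in the sign case $\det[A_1,A_2]<0$, $\det A_j<0$. This gives that $\rea^2\cup M(A_1)\cup M(A_2)$ is locally polynomially convex at the origin. \Cref{thm:main} then yields Carleman approximation.

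The main obstacle is Step 3, if that theorem must actually be proved here rather than quoted. In that case the plan is the standard Weinstock/Gorai route. Polynomial convexity of a union of cones reduces to showing that no nonconstant analytic disc, and more generally no hull point, meets the union away from $0$. One uses the plurisubharmonic function $\Phi(z)=\sum_j(\operatorname{Im} z_j)^2$ restricted to each plane. On $M(A)$ with $z=(A+i\id)x$ this becomes a quadratic form, and one checks its behaviour against $|z|^2$ via the determinant signs. The strategy is to construct a polynomial $p$ (for instance quadratic) with $\rl\,p\le 0$ on the union and $\rl\,p=0$ only at $0$, which gives a peak function at the origin. Combined with Kallin's lemma, this shows the union is locally polynomially convex. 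The sign conditions $\det A_j<0$ and $\det[A_1,A_2]<0$ are what make such a quadratic form separate the three planes.
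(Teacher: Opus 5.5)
Your argument is correct and follows the route the paper intends.

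You establish pairwise transversality and pairwise polynomial convexity. For $\rea^2\cup M(A_j)$ this is Weinstock's criterion. For $M(A_1)\cup M(A_2)$ it is Weinstock again, after applying $(A_1+i\id)^{-1}$ and a real change of coordinates, which turns the pair into $\rea^2\cup M(B)$ with $B=(A_1A_2+\id)(A_1-A_2)^{-1}$. These feed into \Cref{thm:three-totally-real-subspaces-part-1}; that is the relevant result, since part 2 needs $\det[A_1,A_2]>0$. \Cref{thm:main} then gives Carleman approximation.

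Your fallback sketch in the last paragraph is unnecessary. The paper proves that theorem with Kallin's lemma and $p(z)=z_1^2-z_2^2$. This $p$ is real on $\rea^2$, and its imaginary part is nonzero on the other two planes away from the origin, once $A_1$ has been diagonalized.
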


\begin{corollary}
    Let $A_1,A_2\in\rea^{2\times 2}$ such that $\sigma(A_j)\subset\cplx\setminus \{it\in\cplx: |t|\geq 1\}\;\forall j$ and $\sigma((A_1A_2+\id)(A_1-A_2)^{-1})\cap \{it\in\cplx: |t|\geq 1\}=\varnothing$. Assume $\det [A_1, A_2]>0$, $-1<\det A_1<0$ and $\det A_2>0$. Then $M(A_1)\cup M(A_2)\cup\rea^2$ allows the Carleman approximation.
\end{corollary}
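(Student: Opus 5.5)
The statement should follow directly from \Cref{thm:main}, applied with $m=3$ to $L_0=\rea^2$, $L_1=M(A_1)$, $L_2=M(A_2)$. The work is to check its two hypotheses: pairwise transversality, and local polynomial convexity of $\rea^2\cup M(A_1)\cup M(A_2)$ at the origin. I will get the second from \Cref{thm:three-totally-real-subspaces-part-2}, whose stated hypotheses I will need to match against the ones here.

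\emph{Transversality.} A vector $(A_j+iI)x$ with $x\in\rea^2$ is real if and only if $x=0$, so $M(A_j)\cap\rea^2=\{0\}$ for $j=1,2$. For $M(A_1)\cap M(A_2)$, suppose $(A_1+iI)x=(A_2+iI)y$ with $x,y\in\rea^2$. Comparing imaginary parts gives $x=y$, and comparing real parts then gives $(A_1-A_2)x=0$. The hypothesis involves $(A_1-A_2)^{-1}$, so $A_1-A_2$ is invertible and $x=0$. All three planes are maximal totally real because each $A_j+iI$ is invertible.

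\emph{Pairwise polynomial convexity.}
\begin{itemize}
\item Since $\sigma(A_j)$ avoids $\{it: |t|\geq 1\}$, Weinstock's theorem shows that $\rea^2\cup M(A_j)$ is polynomially convex for $j=1,2$.
\item For $M(A_1)\cup M(A_2)$, apply the $\cplx$-linear map $(A_1+iI)^{-1}$, which sends $M(A_1)$ to $\rea^2$. A direct computation should show that it sends $M(A_2)$ to $M(B)$, possibly after a real linear change of coordinates, with $B=(A_1A_2+\id)(A_1-A_2)^{-1}$ or a matrix similar to it.
\item The spectral hypothesis on this matrix, together with Weinstock's theorem, then gives polynomial convexity of $M(A_1)\cup M(A_2)$.
\end{itemize}
These pairwise conditions are presumably the standing assumptions of \Cref{thm:three-totally-real-subspaces-part-2}.

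\emph{Local polynomial convexity of the triple union.} The sign conditions $\det[A_1,A_2]>0$, $-1<\det A_1<0$ and $\det A_2>0$ are exactly the configuration I expect \Cref{thm:three-totally-real-subspaces-part-2} to handle. That theorem should therefore give polynomial convexity of $\rea^2\cup M(A_1)\cup M(A_2)$, or at least of its intersection with a closed ball. The union is invariant under real dilations, so polynomial convexity of $\overline{\mathbb B}(0,r)\cap\bigcup_j L_j$ for one $r$ gives it for every $r$, which is local polynomial convexity at the origin. \Cref{thm:main} then yields the Carleman property.

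The main obstacle is bookkeeping, and I cannot see \Cref{thm:three-totally-real-subspaces-part-2} yet. Its hypotheses may be phrased for a normalized pair, after a $\cplx$-linear change of coordinates or after swapping the roles of $A_1$ and $A_2$. In that case I would check that the signs of $\det[A_1,A_2]$, $\det A_1$ and $\det A_2$ transform compatibly under the normalization, for instance under $(A_1,A_2)\mapsto(A_2,A_1)$, which reverses the sign of $\det[A_1,A_2]$. If they do not, I would instead verify the theorem's condition directly for the given $A_1,A_2$.
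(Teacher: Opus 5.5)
Your proposal is correct and follows the paper's intended route: the corollary is a direct consequence of \Cref{thm:three-totally-real-subspaces-part-2} combined with \Cref{thm:main}. That theorem is stated for the un-normalized pair, with exactly the pairwise Weinstock-type hypotheses and $\det[A_1,A_2]>0$, $-1<\det A_1<0$, $\det A_2>0$, so the sign bookkeeping you worried about is not needed. Your transversality check and your reduction of $M(A_1)\cup M(A_2)$ to $\R^2\cup M(B)$ are correct, and they supply details the paper leaves implicit; the reduction follows from $(A_1+iI)^{-1}(A_2+iI)=(A_1^2+\id)^{-1}\big(B+iI\big)(A_1-A_2)$ with $B=(A_1A_2+\id)(A_1-A_2)^{-1}$.
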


\subsection{The union of two Lipschitz graphs}The other problem we study in this paper is Carleman approximation for union
of two Lipschitz graphs. Our initial interest was to look at
union of two totally real submanifolds but showing polynomial convexity
is challenging compared to showing local polynomial convexity.

Let $\psi \in \smoo^1(\mathbb{R}^n,\mathbb{R}^n)$ follows the Lipschitz condition, i.e., for all $x_1, x_2 \in \mathbb{R}^n$
\[
  \left| \psi(x_1) - \psi(x_2) \right| \leq \alpha | x_1 - x_2 |
\]
where $\alpha \in [0,1)$. Let $\Gamma_\psi = \{ x+i\psi(x) : x\in\mathbb{R}^n \}$ denote
the graph of $\psi$ over the imaginary coordinates.
We say a set $\L$ is a Lipschitz graph if $\L$ can be mapped to $\Gamma_\psi$
under an invertible $\mathbb{C}$-linear map for some $\psi$ defined above.
We can locally identify a totally real submanifold with a Lipschitz graph.
We will discuss some interesting properties of Lipschitz graphs.

\begin{result}\cite[Theorem~1.6.9]{stout2007polynomial}\label{res-manneapprox}
Given $K \subset \Gamma_\psi$ compact, $\mathscr{P}(K) = \smoo(K)$.
\end{result}
From \Cref{res-manneapprox} it follows that any compact $K\subset \L$ is polynomially convex.
Manne shows that a Lipschitz graph allows Carleman approximation in \cite[Section 5]{manne1992some}.
\begin{result}[Manne] \label{thm:carleman-one-lipschitz}
Let $\L$ be a Lipschitz graph as follows
$$
  \L = \{x + i\psi(x): x\in \R^n\},
$$
where $\psi: \R^n \to \R^n$ is $\smoo^1$ with the Lipschitz condition
$
\|\psi(x) - \psi(y)\| \leq \alpha \|x-y\|
$
for all $x,y \in \R$ and $0\leq \alpha < 1$. Then $\L$ allows Carleman approximation.
\end{result}

This article will study Carleman approximation on the union of two
Lipschitz graphs $\L_1$ and $\L_2$
which intersect only at the origin such that $\L_1 \cup \L_2$ is locally polynomially convex.
We also discuss the polynomial convexity of $\L_1 \cup \L_2$ along the way.
Let $\L_1$ and $\L_2$ be two Lipschitz graphs that intersect only at the origin.
After $\C$-linear change of coordinates, we can represent them as
\begin{eqnarray} \label{twoLipschitz}
  \L_1 &=& \{x + i\psi_1(x): x\in \R^n\}, \\
  \L_2 &=& (A+iI)\{x+ i\psi_2(x): x\in \R^n\} \nonumber \\
  &=&  \{ (A+iI)x+i(A+iI)\psi_2(x): x\in \R^n\},
\end{eqnarray}
where $A$ is a real matrix such that $(A+i)$ is invertible
and $\psi_1, \psi_2 \in \smoo^1(\mathbb{R}^n,\mathbb{R}^n)$
which follow the Lipschitz condition, i.e., for all $x_1, x_2 \in \mathbb{R}^n$
\[
  \left| \psi_j(x_1) - \psi_j(x_2) \right| \leq \alpha_j | x_1 - x_2 |
\]
where $\alpha_j \in [0,1)$ for $j = 1,2$ and $\psi_j(0) = 0$.

Further, if $D\psi_1(0) = 0$ and $D\psi_2(0) = 0$, then the above represents two Lipschitz graphs
whose tangent spaces at the origin are $\mathbb{R}^n$ and $(A+iI)\mathbb{R}^n$ respectively. We ask the question: Under the assumption of polynomial convexity of the union of the tangent spaces, {\em whether $\L_1 \cup \L_2$ polynomially convex?} Answer to this seems to depend on the Lipschitz constant $\alpha$. Hence, we also ask: {\em Can we choose appropriate $\alpha_j$ to ensure this?} Since the computations in higher dimensions are a bit complicated we focus here for $n=2$ only.

\begin{theorem} \label{prop:poly-cnv-graphs}
  Let $\L_1$ and $\L_2$ be Lipschitz graphs on $\C^2$ 
  which intersect only at the origin and are given by 
\begin{eqnarray*}
    \L_1 &=& \{x + i\psi_1(x): x\in \R^2\}, \\
    \L_2 &=& \{(A+iI)x+i(A+iI)\psi_2(x): x\in \R^2\}.
\end{eqnarray*}
	Suppose $A$ is a real $2\times 2$ matrix that doesn't have an eigenvalue which is purely imaginary and has modulus greater than one (Weinstock criteria).
	Then there exist constants $\alpha_1, \alpha_2 > 0$ determined by $A$ such that
	if $\psi_j$ is Lipschitz with coefficient $\alpha_j$ for $j=1,2$, then
  Then $\L_1 \cup \L_2$ is polynomially convex.
\end{theorem}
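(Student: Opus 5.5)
We have to show that every compact subset of $\L_1\cup\L_2$ is polynomially convex; exhausting the union by the compacts $K_R=(\L_1\cup\L_2)\cap\overline{B(0,R)}$ then gives polynomial convexity of the closed set. The plan is to reduce to the linear case $\R^2\cup M(A)$, which Weinstock's criterion makes polynomially convex. The idea is to build a plurisubharmonic function that separates points outside $K_R$ from $K_R$.

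For the linear model, Weinstock's proof (equivalently, the normal form under $\C$-linear maps preserving $\R^2$) yields a nonnegative quadratic form $Q(z)=\sum_j \rl(q_j(z))^2$-type expression, built from holomorphic quadratic polynomials $q_j$, with the following properties:
\begin{itemize}
\item $Q$ vanishes exactly on $\R^2\cup M(A)$, and more precisely $Q(z)\ge c\,\mathrm{dist}(z,\R^2\cup M(A))^2$ in a suitable product sense;
\item $Q$ is strictly plurisubharmonic, with $i\partial\bar\partial Q\ge c_0 |dz|^2$, or at least $Q$ is given by $|z|^2$ minus pluriharmonic pieces.
\end{itemize}
Concretely, we take $u_0(z)=\phi_1(z)\phi_2(z)$ type constructions. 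Simpler still, we use Kallin-type separation: find a polynomial $p$ (quadratic in the Weinstock normal form) with $p(\R^2)\subset \overline{\mathbb H}_+$, $p(M(A))\subset \overline{\mathbb H}_-$ and $p^{-1}(0)\cap(\R^2\cup M(A))=\{0\}$. For diagonalizable $A$ with real or non-imaginary eigenvalues, Weinstock's normal forms give such a $p$, e.g. $p(z)=z_1^2+z_2^2$ suitably twisted. Kallin's lemma then reduces polynomial convexity to that of $p^{-1}(0)$ pieces together with each single piece.

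The key step is perturbation. On $\L_1$ we write $z=x+i\psi_1(x)$, so $p(z)=p(x)+O(|x|\,|\psi_1(x)|)$ with $|\psi_1(x)|\le\alpha_1|x|$. The image $p(\L_1)$ then lies in a sector $\{\arg w\in[-\theta_1,\pi+\theta_1]\}$-type region, i.e. a half-plane rotated slightly, with $\theta_1\to0$ as $\alpha_1\to0$. The same holds for $p(\L_2)$ in the opposite half-plane with angle $\theta_2(\alpha_2)$. The quantitative input is the strict separation constant $c(A)>0$ in $\rl p\ge c|x|^2$ (or in the angular version) from the linear case. Choosing $\alpha_1,\alpha_2$ with $C(A)(\alpha_j+\alpha_j^2)<c(A)$ makes $p(\L_1\cap K)$ and $p(\L_2\cap K)$ lie in two closed sectors meeting only at $0$ whose union $\Sigma$ is polynomially convex in $\C$, since it does not separate the plane.

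Then we apply Kallin's lemma. $K_j=\L_j\cap\overline{B(0,R)}$ is polynomially convex by \Cref{res-manneapprox}. The sets $p(K_1)$ and $p(K_2)$ lie in sectors $S_1,S_2$ with $S_1\cap S_2=\{0\}$ and $S_1\cup S_2$ polynomially convex. Finally, $p^{-1}(0)\cap(K_1\cup K_2)$ must be polynomially convex; by the estimate $|p(z)|\ge c|z|^2$ on each graph this set is $\{0\}$. Hence $K_1\cup K_2$ is polynomially convex.

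The main obstacle is producing $p$ with a uniform quantitative separation for every $A$ allowed by the Weinstock criterion, including the boundary cases of eigenvalues $\pm i$, nilpotent parts, and complex non-imaginary eigenvalues. In degenerate cases the sectors may touch along a ray, and a single quadratic polynomial may not separate. There I would first try case analysis over the real Jordan forms of $A$ (as in Weinstock), taking $\alpha_j$ depending on the gap. If a case still fails, I would fall back on a plurisubharmonic-function argument: show that $\mathrm{dist}^2$ to the linear union, corrected by $\alpha$-small terms, stays plurisubharmonic away from the set and exhausts it.
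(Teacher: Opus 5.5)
Your strategy matches the paper's: reduce $A$ by a real similarity $S$ to a real Jordan form, which only rescales the admissible $\alpha_j$ by $\|S\|\|S^{-1}\|$. Then find one quadratic polynomial $p$ that strictly separates the images of the two linear planes, perturb in $\alpha_j$, and finish with Kallin's lemma. The gap is that you never produce $p$, and that construction is essentially the whole proof. You call it ``the main obstacle'', speculate that a single quadratic may fail in degenerate cases, and defer to an unspecified plurisubharmonic fallback. The separation constant $c(A)$ you plan to import ``from Weinstock'' is exactly what has to be established. Your one concrete suggestion also fails as stated. For $A=\mathrm{diag}(2,-2)$, $p=z_1^2+z_2^2$ maps $\R^2$ onto $[0,\infty)$ and maps $(2+i,-2+i)=(A+iI)(1,1)\in M(A)$ to $6$. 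So the two images overlap along the positive real axis, and Kallin's lemma cannot be applied with this $p$.

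What is missing is a case-by-case choice.
\begin{itemize}
\item If $A$ is diagonal or a real Jordan block $\begin{bmatrix}\lambda&1\\0&\lambda\end{bmatrix}$, take $p(z)=z^{T}(A-iI)z$. Then $\operatorname{Im}p(x)=-|x|^2$ on $\R^2$. On the other plane, $\operatorname{Im}p((A+iI)x)=x^{T}(A^2+I)x$, which is $\ge(\lambda^2-|\lambda|+1)|x|^2$ for the Jordan block and $\ge|x|^2$ in the diagonal case. So the two linear pieces go strictly into opposite half-planes, and the nilpotent case causes no trouble.
\item If $A=\begin{bmatrix}s&-t\\t&s\end{bmatrix}$, take $p=z_1^2+z_2^2$. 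Then $p(\R^2)=[0,\infty)$ and $p((A+iI)x)=(s^2+t^2-1+2is)|x|^2$, a single ray. This ray coincides with $[0,\infty)$ only when $s=0,\ |t|>1$, which the Weinstock condition excludes. It degenerates to $\{0\}$ only when $s=0,\ |t|=1$, which invertibility of $A+iI$ excludes. In particular, the eigenvalues $\pm i$ you worried about never occur.
\end{itemize}
Hence in every admissible case a single quadratic separates with some $c(A)>0$: the linear images are either opposite open half-planes or two distinct rays. Only after this step do your $O((\alpha_j+\alpha_j^2)|x|^2)$ perturbation bounds place $p(K_1)$ and $p(K_2)$ in closed sectors meeting only at $0$, with $p^{-1}(0)\cap(K_1\cup K_2)=\{0\}$. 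This is what the paper carries out in its Cases 1--3.
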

Provided a $2\times 2$ matrix $A$ that satisfies the Weinstock condition, one can determine Lipschitz constants $\alpha_1,\alpha_2$
by following the details in the proof of the above theorem (\Cref{subsection:poly-cnx-union-of-lipschitz-graphs}).

%
%
%
\smallskip

Suppose we take $\L_1$ and $\L_2$ as in \Cref{prop:poly-cnv-graphs}. Then by
the Lipschitz condition, setting one point to $0$, we get $\|\psi_j(x)\| \leq \alpha_j \|x\|$, for $x\in \R^n$. So we have
\begin{eqnarray*}
  \L_1 &=& \{x+ i\psi_1(x): x\in \R^n\} \\
  &\subset& \{x+iy \in \C^n: \|y\|\leq \alpha_1 \|x\| \}
\end{eqnarray*}
Similarly,
\begin{eqnarray*}
  \L_2 &=& \{(A+iI)(x+ i\psi_2(x)): x\in \R^n\} \\
  &\subset& (A+iI)\{x+iy \in \C^n: \|y\|\leq \alpha_2 \|x\| \}
\end{eqnarray*}

As $(A+iI)\R^n \cap \R^n = \{0\}$, we can choose $\alpha_1>0$ and $\alpha_2>0$ small enough such that
\begin{equation}
  C_1 \cap C_2 = \{0\} \label{eqn:plane-angular},
\end{equation}
where $C_1 =    \left\{x+iy \in \C^n: \|y\|\leq \alpha_1 \|x\| \right\} $
and $C_2 =   (A+iI)\left\{x+iy \in \C^n: \|y\|\leq \alpha_2 \|x\| \right\}$

\begin{theorem}
  \label{thm:lipschitz-main}
  Let $\L_1$ and $\L_2$ be Lipschitz graphs as in \Cref{prop:poly-cnv-graphs}.
  Then $\L_1 \cup \L_2$ allows Carleman approximation by entire functions in $\C^2$
  when $\alpha_1>0$ and $\alpha_2>0$ are chosen such that
  \begin{itemize}
    \item $\alpha_1 < 1/3$ and $\alpha_2 < 1/3$
    \item $\alpha_1$ and $\alpha_2$ small enough such that (\ref{eqn:plane-angular}) holds.
    \item $\alpha_1$ and $\alpha_2$ small enough such that \Cref{prop:poly-cnv-graphs} holds
      (for the polynomial convexity of $\L_1 \cup \L_2$)
  \end{itemize}
\end{theorem}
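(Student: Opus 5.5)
We follow the strategy Manne used for two totally real subspaces: use the Magnusson--Wold criterion, or equivalently glue together approximations on the two graphs. Since $\L_1\cup\L_2$ is a stratified totally real set (the stratum $X_0=\{0\}$, and $\L_1\cup\L_2\setminus\{0\}$ is a totally real $\smoo^1$ submanifold), the result of Magnusson and Wold reduces the theorem to two facts. First, $\L_1\cup\L_2$ is polynomially convex. Second, it has bounded $E$-hulls. Polynomial convexity is exactly the third hypothesis, via \Cref{prop:poly-cnv-graphs}. So the whole proof concentrates on the bounded $E$-hull property. We must show that for every compact $K\subset\C^2$, the set $\widehat{K\cup\L_1\cup\L_2}\setminus(K\cup\L_1\cup\L_2)$ is bounded.

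For this, enclose $K$ in a ball $\overline{B}(0,R)$ and build plurisubharmonic (or polynomial) barriers separating the far part of $\L_1\cup\L_2$ from points away from it. The first step treats each graph separately. For $\L_1$ consider
\[
\rho_1(z)=\|\Im z\|^2-\beta\|\Re z\|^2 \quad\text{or}\quad \phi(z)=\rl\sum_j z_j^2=\|x\|^2-\|y\|^2 .
\]
On $\L_1$ we have $\|y\|\le\alpha_1\|x\|$, so $\phi\ge(1-\alpha_1^2)\|x\|^2$ grows there. The condition $\alpha_1<1/3$ is where Manne's estimate enters. Together with the Lipschitz bound on $\psi_1$, it makes the functions $e^{-\sum z_j^2}$ and their translates $e^{-\sum(z_j-a_j)^2}$ (with $a\in\L_1$) peak on $\L_1$ with quantitatively controlled decay. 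The cone $\{\|y\|\le 3\alpha_1\|x-a\|\}$ then stays inside the region where $\rl\sum(z_j-a_j)^2>0$. The same holds for $\L_2$ after the linear map $(A+iI)^{-1}$.

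The second step glues the two graphs. By (\ref{eqn:plane-angular}), the cones $C_1$ and $C_2$ meet only at $0$. So outside a large ball, the sets $\L_1$ and $\L_2$ are separated by a positive angle, and their distance grows linearly. Take a point $p\notin K\cup\L_1\cup\L_2$ with $\|p\|$ large. Suppose first that $p$ is near the cone $C_1$. We use a function of the form $h(z)=e^{-\sum(z_j-a_j)^2}$ with $a\in\L_1$ close to $p$. On $\L_1$ it is bounded by a constant independent of $R$. On $\L_2$ and on $K$ it is exponentially small, because $\rl\sum(z_j-a_j)^2\gtrsim\|z-a\|^2$ on the cone $C_2$, which sits at a positive angle. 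At $p$, however, it is large if $p$ lies off $\L_1$ in the right direction. If $p$ is far from both cones, a quadratic barrier $\rl q(z)$ does the job. Here $q$ is a quadratic holomorphic polynomial that is positive on $C_1\cup C_2$ and negative in some direction. Such a $q$ exists because the union of the tangent planes $\R^2\cup(A+iI)\R^2$ satisfies the Weinstock condition and is hence polynomially convex.

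I expect the main obstacle to be this gluing and uniformity step. A single barrier must work simultaneously for both graphs, uniformly as $\|p\|\to\infty$, so the constants must depend only on $\alpha_1,\alpha_2$ and $A$, not on $R$. If the direct barrier construction gets messy, a fallback is available. One approximates $f$ on $\L_1\cup\L_2$ inductively on the exhaustion $K_m=(\L_1\cup\L_2)\cap\overline{B}(0,m)$, using Mergelyan-type approximation (\Cref{res-manneapprox} on each graph plus polynomial convexity of each $K_m$). At each stage one corrects errors on the two graphs separately, with Manne's one-graph Carleman theorem \Cref{thm:carleman-one-lipschitz} applied to suitably cut-off functions. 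The transversality (\ref{eqn:plane-angular}) then ensures that a correction made for one graph is small on the other outside a compact set.
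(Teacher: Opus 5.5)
Your main route has a genuine gap. Reducing to Magnusson--Wold is legitimate in principle: $\L_1\cup\L_2$ is stratified totally real and, by hypothesis, polynomially convex. But then the bounded $E$-hull property carries the entire difficulty. The paper never proves it directly; in \Cref{thm:carleman-proper-map} it uses Magnusson--Wold in the opposite direction, Carleman $\Rightarrow$ bounded $E$-hulls. Your barriers do not establish it.

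The Gaussian $e^{-\sum(z_j-a_j)^2}$ with $a\in\L_1$ is \emph{not} small on $\L_2$. Already for the planes $\L_1=\R^2$ and $\L_2=(\lambda+i)\R^2$ with $\lambda>0$ (Weinstock's condition holds), take $a=x_a\in\R^2$ and the point $z=x_a+ix_a/\lambda\in\L_2$, with $|z|$ comparable to $|a|$. Then $\rl\sum(z_j-a_j)^2=-|x_a|^2/\lambda^2$, so the barrier has size $e^{|x_a|^2/\lambda^2}$ there. At a point $p=a+iw$ near $a$ it is only $e^{|w|^2}$. The positive angle between $C_1$ and $C_2$ controls $\rl(z-a)^2$ only in a thin neighbourhood of $\L_1$ away from $a$, not on the other graph.

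The quadratic barrier also fails in general. For $A=0$ (the polynomially convex pair $\R^2\cup i\R^2$), every homogeneous quadratic $q$ satisfies $q(iy)=-q(y)$, so $\rl q$ cannot be positive on both planes.

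Your fallback is closer to what the paper does, but it omits the key step. An entire function given by Manne's one-graph theorem on $\L_1$ carries no information on $\L_2$; it can spoil the approximation already achieved on $\L_2\cap K_m$. Nor does it give control on any open set of $\C^2$. Using only $\mathcal P(K_{m+1})=\smoo(K_{m+1})$ makes each correction small on $K_m$ but not on a neighbourhood of the origin in $\C^2$, so $\sum h_m$ need not converge to an entire function.

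What is needed, and what \Cref{prop:lipschitz-sqeeze} provides, is a simultaneous statement. For $f$ supported in the annular pieces $K_1\cup K_2$ of \emph{both} graphs, there is an entire $h$ with $\|h-f\|_K<\epsilon$ on $K=(\L_1\cup\L_2)\cap\overline B(0,R)$ and $|h|<\epsilon$ on a ball $B(0,\delta)$. The paper gets this in two steps:
\begin{enumerate}
\item The Gaussian convolution $\tilde h_j^{(t)}$ tends to $0$ on the overlap $W_j=U_j\cap V_j$ of a two-set cover $\{U_j,V_j\}$ of a polynomial polyhedron $\Omega\supset K$. This is where $\alpha_j<1/3$ and the cone condition (\ref{eqn:plane-angular}) are used, to get $\rl(z-u)^2\gtrsim\gamma^2$ there.
\item One splits $\tilde h_j^{(t)}\circ\phi_j=\tilde\alpha_j^{(t)}-\tilde\beta_j^{(t)}$ on $W_j$ by Cousin I, and the open mapping theorem for Fr\'echet spaces forces both pieces to tend to $0$.
\end{enumerate}
After rescaling (\Cref{cor:vanish-lipshitz}), the corrections $h_\eta$ are small on the growing balls $B(0,r_\eta\delta)$. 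That is exactly what makes $\sum h_\eta$ entire and preserves the earlier approximations. Without this localisation, neither your barrier argument nor your inductive scheme closes.
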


\begin{remark}
The above theorem can be generalized to
Lipschitz graphs in $\C^n$ for $n\geq 3$.
Then the proof of \Cref{thm:lipschitz-main} follows
if we obtain polynomial convexity of the union of two Lipschitz graph in $\C^n$ as in \Cref{prop:poly-cnv-graphs}. 
Local polynomial convexity of such a union of Lipschitz graphs in $\C^n$ has been shown by Shafikov and Sukhov \cite[Theorem 4.2]{shafikov-sukhov2016}.
But showing (global) polynomial convexity for
higher dimension is quite challenging.
\end{remark}

\begin{remark}
Given the polynomial convexity of
the union of finitely many Lipschitz graph, one
can discuss Carleman approximation on this union.
This can be shown by combining the ideas in the proof of
\Cref{thm:main} and \Cref{thm:lipschitz-main}.
\end{remark}

\subsection{Surface in $\C^2$ with a hyperbolic complex point}

Let $M \subset \C^2$ be a smooth real surface which is totally real except at $0\in M$. Suppose $0\in M$ is a non-degenerate complex point.
Such a complex point occurs in a generic embedding of $M \hookrightarrow \C^2$.
Bishop \cite{bishop1965} has given a normal form for $M$ near the complex point $0$.
After biholomorphic change of coordinates, we can represent $M$ near $0$ by
$$
    z_2 = z_1 \bar z_1 + \gamma (z_1^2 + \bar z_1^2) + F(z_1),
$$
where $\gamma > 0$, $z_1 \in \overline{D_R(0)}\subset C$,
and $F: \overline{D_R(0)} \to \C$ is a smooth function that vanishes to order three at $0\in M$.
We call the complex point $0\in M$ elliptic if $\gamma < \frac{1}{2}$, parabolic if $\gamma = \frac{1}{2}$
and hyperbolic if $\gamma > \frac{1}{2}$.

Bishop \cite{bishop1965} has shown that a family of analytic discs can be attached to $M$ near an elliptic point.
Hence, we get an obstruction to polynomial convexity at elliptic points.
Forstneri\v c and Stout \cite[Theorem VI]{forstneric-stout1991} have shown that
$M \subset \C^2$ is locally polynomially convex at a hyperbolic complex point.
They look at the preimage of $M$ through a proper holomorphic map from $\C^2$ to $\C^2$.
When $F$ is bounded in $\mathcal C^2$-norm,
the preimage of this surface can be realized as a union of Lipschitz graphs \cite[Theorem 1.3]{alagandala2025}.
We will show that $M$ allows Carleman approximation under a certain bound on the $\mathcal C^2$-norm of $F$.

\begin{theorem}
	\label{thm:hyperbolic-carleman-main}
	Let $M$ be a surface in $\C^2$ given by the following
	$$
			z_2 = z_1 \bar z_1 + \gamma (z_1^2 + \bar z_1^2) + F(z_1),
	$$
	where $\gamma > \frac{1}{2}$, $z_1 \in \C$,
	and a smooth function $F: \C \to \C$ that vanishes to order three at $0\in M$.
	Suppose $\|F\|_{\mathcal C^2(\C)}$  is small enough so that:
	\begin{equation}
		\|F\|_{\mathcal C^2(\C)} \leq \min\left\{
			\frac{(2\gamma -1)^3}{2^{14}\gamma^3},
			\left( \frac{2\gamma}{\sqrt{(2\gamma)^2 - 1}} - 1\right)
				\frac{{(2\gamma -1)^4} }{2^{10}\gamma (2\gamma +1)},
			\frac{1}{3} \frac{{(2\gamma -1)^3} }{2^{10}\gamma (2\gamma +1)}
		\right\}.
		\label{eqn:F-c2-norm-min-ineq}
	\end{equation}
	Then $M$ allows Carleman approximation.
\end{theorem}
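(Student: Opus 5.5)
Following the approach of Forstneri\v c and Stout, I will pull $M$ back by a proper polynomial map and reduce to \Cref{thm:lipschitz-main}. Write $z_1 = x+iy$. Then $z_1\bar z_1 + \gamma(z_1^2+\bar z_1^2) = (1+2\gamma)x^2 + (1-2\gamma)y^2$, and with $\gamma>\frac12$ this quadratic form is indefinite. I will use the proper holomorphic map $\Phi(w_1,w_2) = (w_1, w_2^2)$, or more precisely the version of \cite[Theorem 1.3]{alagandala2025} adapted to this normal form. Under $\Phi$ the preimage $\Phi^{-1}(M)$ is, when $F\equiv 0$, a union of two totally real planes through the origin. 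The first step is to quote \cite[Theorem 1.3]{alagandala2025}: when $\|F\|_{\mathcal C^2}$ is small, $\Phi^{-1}(M)$ is a union $\L_1\cup\L_2$ of two Lipschitz graphs meeting only at $0$. The tangent planes of these graphs are $\R^2$ and $(A+iI)\R^2$ after a $\C$-linear change of coordinates, where $A$ depends only on $\gamma$ and its Lipschitz constants are $O(\|F\|_{\mathcal C^2}/(2\gamma-1)^k)$. The three expressions in \eqref{eqn:F-c2-norm-min-ineq} should be exactly what forces these constants below the three thresholds in \Cref{thm:lipschitz-main}. These are $\alpha_j<1/3$, the cone separation \eqref{eqn:plane-angular} (where the factor $\frac{2\gamma}{\sqrt{4\gamma^2-1}}-1$ measures the angle between the two tangent planes), and the smallness needed for \Cref{prop:poly-cnv-graphs}. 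I will also check that $A$ satisfies the Weinstock condition, which holds because $\gamma>\frac12$ makes the union of the tangent planes locally polynomially convex.

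The second step applies \Cref{thm:lipschitz-main} to conclude that $\L_1\cup\L_2=\Phi^{-1}(M)$ is a Carleman set, and in particular polynomially convex.

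The third step descends to $M$. Let $f\in\smoo(M)$ and $\epsilon>0$ continuous. The function $f\circ\Phi$ is continuous on $\Phi^{-1}(M)$ and invariant under the deck involution $\sigma(w_1,w_2)=(w_1,-w_2)$. Approximate it by an entire $G$ with $|G-f\circ\Phi|<\epsilon\circ\Phi$ on $\Phi^{-1}(M)$. Then symmetrize, setting $\tilde G=\frac12(G+G\circ\sigma)$. Since $\Phi^{-1}(M)$ is $\sigma$-invariant and $\epsilon\circ\Phi$ is too, $\tilde G$ gives the same estimate. Because $\tilde G$ is even in $w_2$, it equals $g\circ\Phi$ for an entire $g$, obtained by collecting even powers in the Taylor series. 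Then $|g-f|<\epsilon$ on $M$.

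The main obstacle is the first step: tracking constants explicitly enough to verify that \eqref{eqn:F-c2-norm-min-ineq} yields all three hypotheses of \Cref{thm:lipschitz-main}. This requires computing the $\mathcal C^1$-norm of the graphing functions, via the implicit function theorem applied to $w_2^2 = Q(w_1)+F(w_1)$, in terms of $\|F\|_{\mathcal C^2}$. The key is that the factor $(2\gamma-1)$ controls the distance from degeneracy. I would first reparametrize each branch as $w_2 = \ell_\pm(w_1)\sqrt{1+F/Q}$, where $\ell_\pm$ are the linear square-root branches of the quadratic part. From this form I would estimate the derivative of the correction term using $|F|\le\|F\|_{\mathcal C^2}|z_1|^2$ and $|Q|\ge(2\gamma-1)|z_1|^2$.
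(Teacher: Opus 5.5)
Your Step 1 fails. Applying $\Phi(w_1,w_2)=(w_1,w_2^2)$ directly to $M$ does not unfold the hyperbolic point. For $F\equiv 0$ you get $\Phi^{-1}(M)=\{w_2^2=Q(w_1)\}$, where $Q(w_1)=(1+2\gamma)x^2+(1-2\gamma)y^2$ is real-valued and indefinite. Hence this set equals $\{w_2\in\R,\ w_2^2=Q(w_1)\}\cup\{w_2\in i\R,\ w_2^2=Q(w_1)\}$. These are two real quadric cones meeting along the two real lines $\{Q(w_1)=0,\ w_2=0\}$, not two totally real planes meeting only at the origin.

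Equivalently, the ``linear square-root branches'' $\ell_\pm$ you invoke do not exist. Writing $\ell(z)=az+b\bar z$, the condition $\ell^2=Q$ forces $a^2=b^2=\gamma$ and $2ab=1$, hence $\gamma=\tfrac12$. Your bound $|Q(z_1)|\ge(2\gamma-1)|z_1|^2$ is also false, since it contradicts the indefiniteness you note yourself. Likewise $\sqrt{1+F/Q}$ breaks down near $\{Q=0\}$. So there is no pair of Lipschitz graphs to which \Cref{thm:lipschitz-main} could be applied, and the constant tracking in your last paragraph has nothing to act on.

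The missing idea is to polarize $Q$. The paper uses $\Psi(z_1,z_2)=(z_1,\,z_1z_2+\gamma(z_1^2+z_2^2))$, built so that $\Psi(\zeta,\bar\zeta)=(\zeta,Q(\zeta))$. For $F\equiv 0$ this gives $\Psi^{-1}(M)=\{(\zeta,\bar\zeta)\}\cup\{(\zeta,-\zeta/\gamma-\bar\zeta)\}$, two transverse totally real planes. For small $F$ it gives the two pseudo Lipschitz graphs of \cite[Theorem 1.3]{alagandala2025}, which a linear change of coordinates puts in the form required by \Cref{thm:lipschitz-main}. In the paper, the first term of the minimum is the hypothesis of that cited theorem, which also supplies polynomial convexity of the preimage; \Cref{prop:poly-cnv-graphs} is not used with explicit constants, contrary to your assignment of the three terms.

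Your map can be repaired. One has $\Psi=\tau\circ\Phi\circ L$ with $L(z_1,z_2)=(z_1,z_2+\tfrac{z_1}{2\gamma})$ and the polynomial automorphism $\tau(a,b)=(a,\gamma b+(\gamma-\tfrac{1}{4\gamma})a^2)$. So apply $\Phi$ not to $M$ but to $\tau^{-1}(M)=\{z_2=(\bar z_1+\tfrac{z_1}{2\gamma})^2+F(z_1)/\gamma\}$. There the branch $\ell(w_1)=\bar w_1+\tfrac{w_1}{2\gamma}$ does exist, and $|\ell(w_1)|\ge\tfrac{2\gamma-1}{2\gamma}|w_1|$ is the correct source of the factor $2\gamma-1$.

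Your Step 3 is sound and genuinely different from the paper. The paper descends through \Cref{prop:bdd-E-hull-proper-map}, polynomial convexity under proper maps, and the Magnusson--Wold characterization (\Cref{thm:carleman-proper-map}). Averaging over the deck involution and factoring even functions through $(w_1,w_2^2)$ is more elementary: it needs neither E-hulls nor the stratified totally real structure. It also survives the repair above, because holomorphic automorphisms preserve Carleman sets.
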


\begin{example}
	The surface $M \subset \C^2$ given by
	$$
			z_2 = z_1 \bar z_1 + \gamma (z_1^2 + \bar z_1^2) +  \Omega(\gamma) \cos(\text{Re} (z_1)),
	$$
	allows Carleman approximation for $\gamma > \frac{1}{2}$.
	Here $\Omega(\gamma)$ is the minimum in \Cref{eqn:F-c2-norm-min-ineq}.
\end{example}

Some words about the layout of this paper: In Section~\ref{sec:preli} we recall some machinery to determine polynomial convexity.
In \Cref{section:three-totally-real-planes}, we look at polynomial convexity of
the union of three totally real subspaces that are mutually transverse in $\C^2$.
Then in \Cref{section:union-of-finitely-many-totally-real-subspaces},
we focus on Carleman approximation on the union of finitely many totally real
subspaces. 
In \Cref{section:union-of-lipschitz-graphs}, we study polynomial convexity of the
union of two Lipschitz graphs in $\C^2$ and proceed to show Carleman
approximation on this union.
In \Cref{section:surfaces-hyperbolic-point}, we analyze how Carleman
approximation behaves under proper holomorphic maps and
show a family of surfaces with a hyperbolic complex point that allow Carleman approximation.
In \Cref{section:ck-smoothness-remarks}, we remark about $\mathcal C^k$-smooth
Carleman approximation.

\section{Technical preliminaries}\label{sec:preli}
A compact set $K\subset \L_1 \cup \L_2$ can be seen as $K_1 \cup K_2$ where $K_j \subset \L_j$ is compact.
As $K_1$ and $K_2$ are polynomially convex, Kallin's lemma could be used to check if
$K = K_1 \cup K_2$ is polynomially convex.
\begin{lemma}[{Kallin's Lemma  \cite[Theorem 1.6.19]{stout2007polynomial}}] \label{lemma:Kallins-poly}
  Let $K_1$ and $K_2$ be two polynomially convex compact set in $\C^n$.
  Let $P$ be a holomorphic polynomial on $\C^n$ such that
  $Y_j = \widehat{(P(K_j))}$ for $j=1,2$ meet at at most at the origin,
  which is a boundary point for both the sets.
  If the set $P^{-1}(0) \cap (K_1 \cup K_2)$ is polynomially convex,
  then $K_1 \cup K_2$ is polynomially convex.
\end{lemma}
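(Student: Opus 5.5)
The plan is to use Jensen (representing) measures together with the one-variable structure of $Y:=Y_1\cup Y_2$. Put $K=K_1\cup K_2$ and fix $z\in\hull{K}$. We must show $z\in K$.

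\textbf{Step 1: $Y$ is polynomially convex and $P(z)\in Y$.} Each $Y_j$ is a polynomially convex compact subset of $\C$, so $\C\setminus Y_j$ is connected. Since $Y_1\cap Y_2\subset\{0\}$, a Mayer--Vietoris argument (or a direct topological check) shows that $\C\setminus Y$ is connected. Hence $Y$ is polynomially convex. Since $|q(P(z))|\le\sup_K|q\circ P|\le \sup_Y|q|$ for every polynomial $q$ in one variable, we get $P(z)\in\hull{P(K)}\subset Y$.

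Fix a Jensen measure $\mu$ for $z$ supported on $K$. Its push-forward $\nu=P_*\mu$ is then a Jensen measure for $w:=P(z)$ on $Y$.

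\textbf{Step 2: the case $w\neq 0$.} Say $w\in Y_1\setminus\{0\}$. Define $f$ on $Y$ by $f(\zeta)=\zeta$ on $Y_1$ and $f\equiv0$ on $Y_2$. This is well defined and continuous because $Y_1\cap Y_2\subset\{0\}$ and $f(0)=0$, and it is holomorphic on $\mathrm{Int}\,Y$. By Mergelyan's theorem ($\C\setminus Y$ connected), $f\in\mathscr P(Y)$, so $f\circ P\in\mathscr P(K)$.

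Jensen's inequality gives $-\infty<\log|w|=\log|f(w)|\le\int_Y\log|f|\,d\nu$. Since $\log|f|=-\infty$ on $Y_2$, this forces $\nu(Y_2)=0$. Therefore $\mu$ gives no mass to $K\cap P^{-1}(Y_2)$. Because $P(K_2)\subset Y_2$, $\mu$ is carried by $K_1$. Consequently $|p(z)|\le\sup_{K_1}|p|$ for all polynomials $p$, so $z\in\hull{K_1}=K_1$. The case $w\in Y_2\setminus\{0\}$ is symmetric.

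\textbf{Step 3: the case $w=0$, which I expect to be the main obstacle.} Here we want $z\in\widehat{K\cap P^{-1}(0)}=K\cap P^{-1}(0)$. The idea is to use that $0$ is a boundary point of $Y$ and to find a peak function there. The first thing I would try is to show that $0$ is a peak point for $\mathscr P(Y)=A(Y)$, i.e. to find $g\in\mathscr P(Y)$ with $g(0)=1$ and $|g|<1$ on $Y\setminus\{0\}$. I would build $g$ from peak functions for $Y_1$ and $Y_2$ at $0$, glued via Mergelyan as in Step 2. For a compact set with connected complement, I would get these from the Riemann map of the unbounded complementary component, or from a barrier argument at the boundary point $0$. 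This is where the hypothesis that $0$ is a boundary point of both $Y_j$ is used; verifying the peak property is the delicate part.

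Granting such a $g$, take any polynomial $p$ and let $N\to\infty$. Since $g(w)=1$ and $(g\circ P)^Np\in\mathscr P(K)$, we have
\[
|p(z)|=|(g\circ P)^N(z)\,p(z)|\le\sup_K|(g\circ P)^N p|.
\]
Given $\delta>0$, on $K\setminus\{|P|<\delta\}$ the factor $|g\circ P|^N$ tends to $0$ uniformly. So for large $N$ the supremum is at most $\sup\{|p(\xi)|:\xi\in K,\ |P(\xi)|\le\delta\}$. Letting $\delta\to0$ and using compactness of $K$ gives $|p(z)|\le\sup_{K\cap P^{-1}(0)}|p|$. Hence $z$ lies in the polynomial hull of $P^{-1}(0)\cap K$, and by hypothesis this hull is $P^{-1}(0)\cap K\subset K$. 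This completes the plan.
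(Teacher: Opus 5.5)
The paper gives no proof of this lemma; it cites Stout, so your argument has to stand on its own. Steps 1 and 2 do. $\C\setminus Y$ is connected by Janiszewski's theorem. Since $P(\widehat K)\subset Y$, the measure $P_*\mu$ is a Jensen measure for $w$ with respect to all of $\mathcal P(Y)$. Your $f$ (equal to $\zeta$ on $Y_1$ and $0$ on $Y_2$) lies in $\mathcal P(Y)$ by Mergelyan, so $\nu(Y_2)=0$ and $z\in\widehat{K_1}=K_1$.

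\textbf{The gap is Step 3.} Everything there rests on a function $g\in\mathcal P(Y)$ with $g(0)=1$ and $|g|<1$ on $Y\setminus\{0\}$, and you do not produce it. This is the only place the hypothesis that $0$ is a boundary point of both $Y_j$ enters, so it is the actual content of the lemma, not a routine check. The routes you sketch do not work as stated:
\begin{itemize}
\item $\hat{\C}\setminus Y_j$ is simply connected only when $Y_j$ is connected, and $Y_j=\widehat{P(K_j)}$ need not be.
\item A Riemann map of that domain is a function on the complement, so it does not by itself give elements of $\mathcal P(Y_j)$.
\item $0$ need not even be an accessible boundary point of $\C\setminus Y_j$, so there is no usable boundary correspondence at $0$.
\end{itemize}

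\textbf{How to fill it.} Use the classical fact that, for $Y\subset\C$ compact with connected complement, every point of $\partial Y$ is a peak point of $\mathcal P(Y)$. Two standard proofs:
\begin{itemize}
\item By Walsh's theorem, $\mathcal P(Y)$ is a Dirichlet algebra on $\partial Y$. Hence a point of $\partial Y$ has $\delta$ as its only representing measure, and by Bishop's theorem such Choquet-boundary points are peak points.
\item By Melnikov's criterion: the connected unbounded open set $\C\setminus Y$ meets every disc about $0$, so it contains an arc crossing each annulus $\{2^{-k-1}\le|\zeta|\le 2^{-k}\}$. Such an arc has analytic capacity at least $2^{-k-3}$, so the Melnikov series diverges.
\end{itemize}

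You can apply this to $Y$ itself, with no gluing, because $0\in\partial Y$. Indeed, suppose a disc $D\subset Y$ about $0$ existed. Then $(D\setminus\{0\})\cap Y_1$ and $(D\setminus\{0\})\cap Y_2$ would split the connected set $D\setminus\{0\}$ into disjoint relatively clopen pieces. That forces $D\subset Y_j$ for some $j$, contradicting $0\in\partial Y_j$.

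\textbf{Finishing Step 3.} In your Jensen framework the conclusion is then one line. From $0=\log|g(0)|\le\int\log|g|\,d\nu$ and $\log|g|<0$ on $Y\setminus\{0\}$ you get $\nu=\delta_0$. So $\mu$ is carried by $K\cap P^{-1}(0)$, and $z\in\widehat{K\cap P^{-1}(0)}=K\cap P^{-1}(0)$. Your $N\to\infty$ argument also works.

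\textbf{Degenerate case.} If $Y_1\cap Y_2=\emptyset$, which ``at most'' allows, $0$ may be interior to one $Y_j$, and then no peak function exists. In that case Step 2 with $f=\chi_{Y_1}$, which lies in $\mathcal P(Y)$ by Runge, already handles $w=0$.
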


Here is an approximation result for compactly supported continuous functions on Lipschitz graphs of $\R^n$.
The details are present in \Cref{thm:carleman-one-lipschitz} of Manne \cite{manne1992some}. 

\begin{proposition}
  \label{prop:lipshitz-one-approx}
  Let $\L = \{(x+i\psi(x)): x\in \R^n\}$ where $\psi \in \smoo^1(\R^n, \R^n)$ and satisfies the Lipschitz condition with $\alpha<1$.
  Let $f \in \smoo(\L)$ have compact support and $\epsilon > 0$.
  Define
  $$
    h_t(z) :=  \left(\frac{1}{t\sqrt{\pi}}\right)^n \int_{\L} f(u) \exp\left({-\frac{(z-u)^2}{t^2}}\right)du
  $$ 
  Then $h_t \in \holo(\C^n)$ and
  $h_t \to f$ uniformly on $\L$ as $t\to 0^+$.

  Further, for $\delta>0$, define
  $$
    A_\delta := \{z\in\C^n: Re(z-w)^2 \geq \delta \quad \forall w \in \text{supp}(f) \}
  $$
  Then $h_t \to 0$ uniformly on $A_\delta$.
\end{proposition}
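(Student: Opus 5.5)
The plan is the classical Weierstrass-transform argument, done on the graph $\L$. The two ingredients are:
\begin{itemize}
\item on $\L$ the real part of $(z-u)^2$ is comparable to $|x-s|^2$;
\item the holomorphic form $dz_1\wedge\dots\wedge dz_n$ restricted to $\L$ lets us shift the contour back to $\R^n$ when integrating the Gaussian.
\end{itemize}

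\textbf{Holomorphy.} Parametrize $\L$ by $u=s+i\psi(s)$ and interpret $du$ as $\det(I+iD\psi(s))\,ds$. Then
$$h_t(z)=(t\sqrt\pi)^{-n}\int_{\R^n} f(u(s))\exp\bigl(-(z-u(s))^2/t^2\bigr)\det(I+iD\psi(s))\,ds .$$
The integrand is entire in $z$ and $f$ has compact support, so differentiation under the integral shows $h_t\in\holo(\C^n)$.

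\textbf{Convergence on $\L$.} Take $z=x+i\psi(x)$ and $u=s+i\psi(s)$. Then
$$\rl (z-u)^2=|x-s|^2-|\psi(x)-\psi(s)|^2\ge (1-\alpha^2)|x-s|^2,$$
so the kernel satisfies $|\exp(-(z-u)^2/t^2)|\le \exp(-(1-\alpha^2)|x-s|^2/t^2)$. Split $f(u)=f(z)+(f(u)-f(z))$.

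For the difference term, use uniform continuity of $f$ on a compact neighbourhood of the support. Near the diagonal, $|f(u)-f(z)|$ is small; away from it, the kernel decays like a Gaussian of width $t$. The factor $|\det(I+iD\psi)|$ is bounded by $(1+\alpha)^n$ because $\|D\psi\|\le\alpha$. Together these give a bound of the form
$$o(1)\cdot (t\sqrt\pi)^{-n}\int \exp\bigl(-(1-\alpha^2)|x-s|^2/t^2\bigr)\,ds=o(1)$$
uniformly in $x$. Points $x$ far from the support are handled by the same estimate.

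\textbf{Main obstacle: the constant term.} Here one must show
$$(t\sqrt\pi)^{-n}\int_{\L}\exp\bigl(-(z-u)^2/t^2\bigr)\,du = 1 \quad\text{for } z\in\L.$$
The form $\exp(-(z-u)^2/t^2)\,du_1\wedge\dots\wedge du_n$ is holomorphic, hence closed, in $u$. Apply Stokes' theorem to the homotopy $s\mapsto s+i\lambda\psi(s)$, $\lambda\in[0,1]$, truncated at $|s|\le R$. On the lateral boundary $|s|=R$ we still have $\rl(z-u)^2\ge(1-\alpha^2)|x-s|^2-C$, so this contribution tends to $0$ as $R\to\infty$. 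Hence the integral over $\L$ equals the integral over $\R^n+i0$. Translating by $z$, this becomes the standard complex Gaussian $\int_{\R^n}e^{-(w-s)^2/t^2}ds=(t\sqrt\pi)^n$, which holds by analytic continuation from real $w$. The condition $\alpha<1$ is exactly what makes every estimate in this step uniform, so this is the step needing care.

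\textbf{Decay on $A_\delta$.} For $z\in A_\delta$ and $u\in\operatorname{supp} f$ we have $|\exp(-(z-u)^2/t^2)|\le e^{-\delta/t^2}$. Therefore
$$|h_t(z)|\le (t\sqrt\pi)^{-n}\,e^{-\delta/t^2}\,\|f\|_\infty\,(1+\alpha)^n\,\mathrm{vol}(\operatorname{supp} f),$$
which tends to $0$ uniformly as $t\to0^+$.
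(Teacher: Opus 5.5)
Your proposal is correct and follows essentially the same route as the proof the paper relies on. The paper gives no proof of its own here and defers to Manne's Weierstrass-transform argument for Lipschitz graphs, which likewise reads $du$ as $du_1\wedge\dots\wedge du_n$, uses the bound $\rl(z-u)^2\ge(1-\alpha^2)|x-s|^2$ on the graph, and normalizes the kernel by deforming $\L$ to $\R^n$ via Cauchy's theorem (Stokes). Your lateral-boundary estimate also holds exactly as written, with $C=|\psi(x)|^2$: by convexity, $|\lambda(\psi(x)-\psi(s))+(1-\lambda)\psi(x)|^2\le\lambda\alpha^2|x-s|^2+(1-\lambda)|\psi(x)|^2$.
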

The following lemma will be used in \Cref{thm:three-totally-real-subspaces-part-1}.
\begin{lemma}
  \label{lemma:am-gm}
  Let $a,b, c \in \R$ such that
  $ab > c^2$.
  Then $ax^2 + by^2 + 2cxy = 0$ if, and only if, $(x,y) = (0,0)$.
  Where $x,y\in\R$.
\end{lemma}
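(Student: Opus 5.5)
Lemma~\ref{lemma:am-gm} is a statement about a binary quadratic form, so I will prove it by completing the square. The hypothesis $ab>c^2$ forces $ab>0$, so $a\neq 0$ and $b\neq 0$, and in fact $a$ and $b$ have the same sign.

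The reverse implication is trivial: $(0,0)$ clearly solves the equation.

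For the forward implication, suppose $ax^2+by^2+2cxy=0$. Since $a\neq 0$, multiply the equation by $a$ and regroup:
\[
a^2x^2+2acxy+aby^2=(ax+cy)^2+(ab-c^2)y^2=0.
\]
Both terms on the right are nonnegative, and $ab-c^2>0$. Hence each term must vanish, which gives $y=0$. Then $(ax)^2=0$, and since $a\neq 0$ we get $x=0$.

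An equivalent way to say this is that the symmetric matrix $\begin{pmatrix} a & c\\ c& b\end{pmatrix}$ has positive determinant, so it is definite (positive or negative according to the sign of $a$). Its quadratic form therefore vanishes only at the origin. Either route works; I would present the completed-square version because it is self-contained.

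The only point needing care is to note that $a\neq 0$ before multiplying through by $a$. This follows directly from $ab>c^2\geq 0$.
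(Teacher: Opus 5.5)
Your proof is correct, but it takes a different route from the paper. The paper first normalizes to $a>0$, replacing $(a,b,c)$ by $(-a,-b,-c)$ if necessary, so that $b>0$ and $\sqrt{ab}>|c|$. It then applies AM--GM: for $xy\neq 0$ one has $ax^2+by^2\ge 2\sqrt{ab}\,|xy|>2|c|\,|xy|$, and if exactly one of $x,y$ vanishes it checks directly that $ax^2+by^2>0=2|c|\,|xy|$. A zero of the form would force $ax^2+by^2=-2cxy=2|c|\,|xy|$, so only $(0,0)$ survives. Your identity $a\,(ax^2+by^2+2cxy)=(ax+cy)^2+(ab-c^2)y^2$ does the same job more economically. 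Multiplying by $a$ absorbs the sign normalization, and the split into two nonnegative terms handles all $(x,y)$ at once, with no case analysis on whether $x$ or $y$ is zero. It also shows directly that the form is definite with the sign of $a$, since off the origin it equals $a^{-1}$ times a strictly positive quantity. The paper's AM--GM inequality gives the same conclusion for $a>0$, but less transparently. Both arguments are elementary.
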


\begin{proof}
  The proof is an application of the AM-GM inequality.
  Assume $a>0$. Then $b>0$ and $\sqrt{ab} >  |c|$.
  \begin{itemize}
  \item
    Case 1: $x \neq 0, y\neq 0$:\\
    By AM-GM inequality on $ax^2$ and $by^2$, we get:
    $$
    ax^2 + by^2 \geq 2 \sqrt{ab}|xy| > 2 |c| |xy|.
    $$
  \item
    Case 2: either $x = 0$ or $y=0$ but not both:\\
      Say $x=0$ and $y>0$, then
      $$
      ax^2 + by^2 = by^2 > 0 = 2 |c| |xy|.
      $$
  \end{itemize}

  If we have $ax^2 + by^2 + 2c xy = 0$ then $ax^2 + by^2 = 2|c| |xy|$
  which only happens when $(x,y) = (0,0)$.

  When $a < 0$, we can replace $a, b, c$ with $-a, -b,-c$ respectively.
  Then the same proof follows.
\end{proof}

\section{Polynomial convexity of three totally real planes}
\label{section:three-totally-real-planes}

In this section,
we will analyze two condition for the polynomial convexity
of three maximal totally real planes given that union of any two 
of them is polynomially convex.

\begin{theorem}
    \label{thm:three-totally-real-subspaces-part-1}
  Let $P_0$, $P_1$, $P_2$ be maximal totally real subspaces in $\C^2$ as:
  \begin{eqnarray*}
    P_0 &=& \R^2, \\ 
    P_1 &=& (A_1 + iI)\R^2, \\
    P_2 &=& (A_2 + iI)\R^2,
  \end{eqnarray*}
  where $A_1, A_2 \in \R^{2\times 2}$,
  the union of any two of $\{P_0, P_1, P_2\}$ is locally polynomially convex at the origin
  and the following
  holds:
  \begin{itemize}
    \item $\det{[A_1,A_2]} < 0$,
    \item $\det{A_j} < 0$, for $j=1,2$.
  \end{itemize}
  Then $P_0 \cup P_1 \cup P_2$ is locally polynomially convex at the origin.
\end{theorem}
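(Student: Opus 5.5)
The plan is to apply Kallin's Lemma (\Cref{lemma:Kallins-poly}) with a homogeneous quadratic polynomial $q$. Since the sets are cones, it suffices to show that $K=K_0\cup K_{12}$ is polynomially convex, where $K_0=P_0\cap\overline{B}$ and $K_{12}=(P_1\cup P_2)\cap\overline{B}$ for a closed ball $\overline{B}$ centred at $0$. $K_0$ is polynomially convex because it is a compact subset of $\R^2$. $K_{12}$ is polynomially convex because $P_1\cup P_2$ is locally polynomially convex at the origin, and by linearity every compact piece of it is then polynomially convex (as in the remark after \Cref{thm:main}).

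To choose $q$, I would take $q(z)=z^{T}Sz$ for a real symmetric $2\times 2$ matrix $S$, to be found. On $P_0$ we have $q(x)=x^TSx\in\R$, so $q(K_0)$ is a real segment and equals its own hull $Y_0$. On $P_j$ I would write $z=(A_j+iI)x$, which gives
\[
q(z)=x^T(A_j^TSA_j-S)x+i\,x^T(A_j^TS+SA_j)x .
\]
The goal is to find $S$ such that the two forms $x^T(A_j^TS+SA_j)x$, $j=1,2$, are both positive definite. Granting this, $q(K_{12})$ lies in the closed upper half plane and meets $\R$ only at $0$. The image of each $P_j$ is a closed convex angular sector in $\{\Im w>0\}\cup\{0\}$. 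So $q(K_{12})$ is a compact set whose complement in $\C$ is connected, and hence it is its own hull $Y_{12}$. Therefore $Y_0\cap Y_{12}\subset\{0\}$, and $0$ is a boundary point of both sets. Next, $q^{-1}(0)\cap K$ lies in $\R^2$: on $P_j$ the imaginary part vanishes only at $x=0$ (this is exactly \Cref{lemma:am-gm} for the $2\times2$ form). So it is a compact subset of $\R^2$ and therefore polynomially convex. Kallin's Lemma then gives that $K$ is polynomially convex, which is the claimed local polynomial convexity at the origin.

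The main obstacle is producing $S$ from the two hypotheses $\det A_j<0$ and $\det[A_1,A_2]<0$; note that $S$ may be indefinite. I would parametrize $S=\begin{pmatrix} s&t\\ t&u\end{pmatrix}$, so that $L_j(S)=A_j^TS+SA_j$ depends linearly on $(s,t,u)$. Positive definiteness of $L_j(S)$ means $\det L_j(S)>0$ together with a positive diagonal entry. By \Cref{lemma:am-gm} this is the condition $ab>c^2$ with $a,b>0$, where $a,b,c$ are linear in $(s,t,u)$. Since $\det A_j<0$, each $A_j$ has real eigenvalues of opposite signs. In the eigenbasis of $A_1$, the set of admissible $S$ for $A_1$ is an explicit open cone, for instance one containing $\mathrm{diag}(\lambda_1^{-1},\lambda_2^{-1})$-type matrices with indefinite signs. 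I would then check that this cone meets the corresponding cone for $A_2$ by computing in coordinates where $A_1$ is diagonal and writing $A_2$ generally. The sign of $\det[A_1,A_2]$ controls the relative position of the eigenlines, namely whether the eigenvectors of $A_2$ separate those of $A_1$. I expect $\det[A_1,A_2]<0$ to be exactly the condition that makes the two cones intersect.

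If no common $S$ exists in some subcase, the fallback is to allow $q$ to send $P_1$ and $P_2$ into two different closed half-planes, the upper and the lower one. Then $Y_{12}$ is the union of two sectors on opposite sides of $\R$, which is still polynomially convex, and the only requirement is that neither sector contains a positive or negative real ray. That weaker requirement is just the sign-definiteness of each imaginary part separately, with $S$ common to both $j$.
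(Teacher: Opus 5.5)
Your framework is the paper's: Kallin's lemma applied to $K_0$ and $K_1\cup K_2$, with a quadratic $q(z)=z^TSz$ that is real on $\R^2$ and has imaginary part $x^T(A_j^TS+SA_j)x$ on $P_j$. But the whole content of the theorem is the existence of a suitable $S$. You leave that step as an expectation, and the expectation is false. A common $S$ making both $A_j^TS+SA_j$ positive definite need not exist. If it did, positive definiteness would pass to $B=\tfrac12(A_1+A_2)$. But if $Bv=i\omega v$ for some $v\neq 0$, then $v^*(B^TS+SB)v=0$, a contradiction.

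Concretely, take $A_1=\mathrm{diag}(1,-1)$ and $A_2=\begin{pmatrix}-1&-1/2\\ 1/2&1\end{pmatrix}$. Then $\det A_1=-1$, $\det A_2=-3/4$, $\det[A_1,A_2]=-1$. The pairwise unions are polynomially convex: $A_1,A_2$ have real eigenvalues, and $(A_1A_2+I)(A_1-A_2)^{-1}$ has eigenvalues $\pm i/\sqrt{15}$. Yet $B$ has eigenvalues $\pm i/4$. So $\det[A_1,A_2]<0$ is not the condition for the two positive-definite cones to meet, and your primary plan fails in about half the configurations allowed by the hypotheses.

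Your fallback is the right requirement: each $A_j^TS+SA_j$ definite, with the signs allowed to differ, so that $q(P_1)$ and $q(P_2)$ land in opposite half-planes. Kallin's lemma still applies in that setting. But you never show such an $S$ exists, and that is the missing step. It comes from a normalization.

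\begin{itemize}
\item Diagonalize $A_1=\mathrm{diag}(\lambda_1,\lambda_2)$ by a real change of coordinates, which preserves $\R^2$.
\item If $b,c$ are the off-diagonal entries of $A_2$, then $\det[A_1,A_2]=(\lambda_1-\lambda_2)^2bc$, so $bc<0$.
\item Rescale the eigenbasis to get $A_2=\begin{pmatrix}s_1&-q\\ q&s_2\end{pmatrix}$.
\end{itemize}

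Now take $S=\mathrm{diag}(1,-1)$, i.e. $q(z)=z_1^2-z_2^2$. This makes both $SA_1$ and $SA_2$ symmetric, so $A_j^TS+SA_j=2SA_j$. Its determinant is $-4\det A_j>0$, so it is definite. For $j=1$ it is positive definite once $\lambda_1>0$. For $j=2$ its sign is the sign of $s_1$, which can be negative, as in the example above. This is exactly the paper's choice, and with it the rest of your argument goes through.
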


\begin{proof}
  As $\det A_1 < 0$, it has distict non-zero real eigenvalues of different signs.
  By \cite[Lemma 2.4]{goraithreeplanes}, 
  after suitable change of coordinates, we get
  $$
  A_1 = \begin{bmatrix}
    \lambda_1 & 0 \\
    0 & \lambda_2 
  \end{bmatrix},
  $$
  and
  $$
    A_2 = \begin{bmatrix}
      s_1 & q \\
      q & s_2 
    \end{bmatrix}
    \text{ or }
    \begin{bmatrix}
      s_1 & - q \\
      q & s_2 
    \end{bmatrix},
  $$
  where $\lambda_1, \lambda_2 \in \R$.
  With $\lambda_1 > 0$ and $\lambda_2 < 0$.
  Where $s_1, s_2, q \in \R$.
  But since $\det [A_1, A_2] < 0$, it is forced that
  $$
  A_2 = \begin{bmatrix}
      s_1 & - q \\
      q & s_2 
    \end{bmatrix}.
  $$

  We can write
  \begin{eqnarray*}
    P_0 &=& \{ (x,y) : x,y \in \R \}, \\
    P_1 &=& \{ ((\lambda_1 +i) x,(\lambda_2+i) y) : x,y \in \R \}, \\
    P_2 &=& \{ ((s_1+i) x - qy ,qx + (s_2+i) y) : x,y \in \R \}.
  \end{eqnarray*}
  
  Let $
  K_j = \overline{B(0,1)} \cap P_j 
  $ for $j=0,1,2$. We need polynomially convexity of $K_0 \cup K_1 \cup K_2$.
  For this we will apply Kallin's \Cref{lemma:Kallins-poly} to $K_0$ and $K_1 \cup K_2$.
  
  Consider the polynomial $$p (z) = z_1^2 - z_2^2.$$
  Suppose $z \in K_0$, then $\Im (p(z)) = 0$. We have $p(K_0) \subset \{x: x\in \R \}$.
  And $$p^{-1}\{0\} \cap (K_0) = \{(x,\pm x): x\in \R\} \cap \overline{B(0,1)}.$$
  Suppose $z \in K_1$, for some $z = ((\lambda_1 +i) x,(\lambda_2+i) y)$ for some $x,y \in \R$.
  Then $\Im(p(z)) = 2(\lambda_1 x^2 + (- \lambda_2) y^2)$.
  Note that $\lambda_1 > 0$ and $\lambda_2 < 0$.
  We have $\Im(p(z)) > 0$ when $(x,y) \neq (0,0)$.
  And $p^{-1}\{0\} \cap K_1 = \{(0,0)\}$.

  This shows $\widehat{p(K_0)}\cap \widehat{p(K_1)} = \{0\}$ and
  $$p^{-1}\{0\} \cap (K_0 \cup K_1) = \{(x,\pm x): x\in \R\} \cap \overline{B(0,1)}.$$

  Suppose $z \in K_2$, say $z= ((s_1+i) x - qy ,qx + (s_2+i) y)$. Then 
  $$\Im(p(z)) = -4qxy + 2s_1 x^2 - 2s_2 y^2.$$
  We have $\det A_2 = s_1 s_2 + q^2 < 0$.
  Set $a = s_1$, $b= - s_2$ and $c = -q$. We have 
  $\Im(p(z)) = 2(cxy + a x^2 +b y^2)$ and $ab > c^2$. By \Cref{lemma:am-gm} we
  get $\Im(p(z))=0$ if, and only if, $(x,y)=(0,0)$.
  Hence $\Im(p(z)) \neq 0$ unless $z=(0,0)$.
  And $p^{-1}\{0\} \cap K_2 = \{(0,0)\}$.

  This shows $\widehat{p(K_0)}\cap \widehat{p(K_2)} = \{0\}$ and 
  $$ p^{-1}\{0\} \cap (K_0 \cup K_2) = \{(x,\pm x): x\in \R\} \cap \overline{B(0,1)}.$$

  Let $K = K_1 \cup K_2$.
  From the above two results $\widehat{p(K_0)}\cap \widehat{p(K)} = \{0\}$.
  And $p^{-1}\{0\} \cap (K_0 \cup K) = \{(x,\pm x): x\in \R\} \cap \overline{B(0,1)}$.
  Which is polynomially convex as any compact subset of $\R^2$ is polynomially convex in $\C^2$.


\end{proof}

\begin{theorem} \label{thm:three-totally-real-subspaces-part-2}
  Let $P_0$, $P_1$, $P_2$ be maximal totally real subspaces in $\C^2$ as:
  \begin{eqnarray*}
    P_0 &=& \R^2, \\ 
    P_1 &=& (A_1 + iI)\R^2, \\
    P_2 &=& (A_2 + iI)\R^2,
  \end{eqnarray*}
  where $A_1, A_2 \in \R^{2\times 2}$,
  the union of any two of $\{P_0, P_1, P_2\}$ is locally polynomially convex at the origin
  and the following
  holds:
  \begin{itemize}
    \item $\det{[A_1,A_2]} > 0$,
    \item $-1< \det{A_1} < 0$,
    \item $\det{A_2} > 0$.
  \end{itemize}
  Then $P_0 \cup P_1 \cup P_2$ is locally polynomially convex at the origin.
\end{theorem}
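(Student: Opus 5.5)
The plan is to follow the proof of \Cref{thm:three-totally-real-subspaces-part-1}: put the matrices in a normal form and then apply Kallin's \Cref{lemma:Kallins-poly} with a quadratic polynomial that separates one plane from the union of the other two.

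\textbf{Normal form.} Since $\det A_1<0$, the matrix $A_1$ has real eigenvalues $\lambda_1>0>\lambda_2$. By \cite[Lemma 2.4]{goraithreeplanes} we may take $A_1=\mathrm{diag}(\lambda_1,\lambda_2)$ and
$$A_2=\begin{bmatrix} s_1 & q\\ q& s_2\end{bmatrix}\quad\text{or}\quad \begin{bmatrix} s_1 & -q\\ q& s_2\end{bmatrix}.$$
As in the previous proof, the sign of $\det[A_1,A_2]$ should select one of the two forms. Here $\det[A_1,A_2]>0$ should force the symmetric one, and $\det A_2=s_1s_2-q^2>0$ then gives $s_1s_2>q^2$. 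Set $K_j=\overline{B(0,1)}\cap P_j$. Since $P_0\cup P_2$ is locally polynomially convex by hypothesis, $K_0\cup K_2$ is polynomially convex. We then apply Kallin's lemma to the pair $K_1$ and $K_0\cup K_2$.

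\textbf{The polynomial and its images.} The natural candidate is $p(z)=z_1^2+z_2^2$, rather than $z_1^2-z_2^2$.
\begin{itemize}
\item On $P_0$, $p=x^2+y^2\ge 0$, so $p(K_0)$ lies on the nonnegative real axis.
\item On $P_2$ we have $\Im p=2(s_1x^2+2qxy+s_2y^2)$. This vanishes only at the origin by \Cref{lemma:am-gm}, so $p(K_2)$ lies in a closed half-plane $\{\pm\Im w\ge 0\}$ and meets $\R$ only at $0$.
\item On $P_1$ we have $\Im p=2(\lambda_1x^2+\lambda_2y^2)$ and $\Re p=(\lambda_1^2-1)x^2+(\lambda_2^2-1)y^2$. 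Hence $p(K_1)$ is a compact piece of the closed convex sector spanned by the vectors $(\lambda_1^2-1,2\lambda_1)$ and $(\lambda_2^2-1,2\lambda_2)$; the first lies in the upper half-plane, the second in the lower.
\end{itemize}
On the line $\Im p=0$ inside $P_1$, parametrise $x^2=-\lambda_2t$, $y^2=\lambda_1t$ with $t\ge0$. Then
$$\Re p=t(\lambda_2-\lambda_1)(1+\lambda_1\lambda_2).$$
This is where $-1<\det A_1$ is used: it makes this quantity strictly negative for $t>0$. Consequently $p^{-1}(0)\cap K_1=\{0\}$, and the sector $p(K_1)$ straddles the negative real axis while avoiding the positive one. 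Since sectors and their compact pieces are polynomially convex in $\C$, the fibre $p^{-1}(0)\cap(K_0\cup K_1\cup K_2)=\{0\}$ is trivially polynomially convex.

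\textbf{Main obstacle.} It remains to show $\widehat{p(K_1)}\cap\widehat{p(K_0\cup K_2)}=\{0\}$. The real axis part is handled above, so the issue is that $p(K_2)$, a sector in one half-plane, must not meet the sector $p(K_1)$. For this I would compute the extreme rays of $p(P_2)$, which are the image of a definite binary quadratic form, and compare their slopes with $2\lambda_j/(\lambda_j^2-1)$. I expect the inequality $\det[A_1,A_2]>0$, together with $\det A_2>0$, to be exactly what places $p(K_2)$ strictly between the positive real axis and the boundary ray of $p(K_1)$ on the same side.

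If this comparison fails for $z_1^2+z_2^2$, I would instead use $p_c=z_1^2+cz_2^2$ with $c>0$ to be chosen. The computation above still goes through for $P_0$ and $P_1$ provided $-1<\det A_1<0$ is replaced by the corresponding scaled condition, and I would tune $c$ so that the $P_1$-sector and the $P_2$-sector lie on opposite sides of a ray. Once the images are separated, Kallin's lemma gives polynomial convexity of $K_0\cup K_1\cup K_2$. By linearity of the $P_j$, this is local polynomial convexity at the origin.
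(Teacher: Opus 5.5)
Your computations on each plane are correct, but the Kallin pairing you chose cannot be completed. With $K_1$ on one side and $K_0\cup K_2$ on the other, you need $p(K_1)\cap p(K_2)=\{0\}$, and the hypotheses do not imply this.

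Here is why. The set $p(P_1)$ is the closed convex cone generated by $(\lambda_1+i)^2$ and $(\lambda_2+i)^2$. For symmetric $A_2$ with eigenvalues $\mu_1,\mu_2$, the set $p(P_2)$ is the cone generated by $(\mu_1+i)^2$ and $(\mu_2+i)^2$. If $\mu_1,\mu_2>0$, these two cones meet only at $0$ iff $\min\mu_j>\lambda_1$, and nothing in the hypotheses forces that.

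A concrete failure: take $A_1=\mathrm{diag}(1,-\tfrac12)$ and $A_2=\begin{bmatrix}1/2&1/10\\1/10&1/2\end{bmatrix}$. Then $\det A_1=-\tfrac12$, $\det A_2=\tfrac{6}{25}$ and $\det[A_1,A_2]=\tfrac{9}{400}$. Both $A_j$ have real spectra. The matrix $B=(A_1-A_2)^{-1}(A_2A_1+I)$, for which $(A_1+i)^{-1}P_2=(B+i)\R^2$, has $\det B=-\tfrac{113}{51}<0$. So all three pairwise unions are polynomially convex by Weinstock, and the theorem's hypotheses hold.

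Yet $p((A_2+i)e_1)=-\tfrac{37}{50}+i$ lies in the open second quadrant. That quadrant is contained in $p(P_1)$, the cone from $2i$ to $-\tfrac34-i$. By homogeneity, $p(K_1)\cap p(K_2)$ therefore contains a nondegenerate segment.

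Your fallback $z_1^2+cz_2^2$ does not help. For $c>0$ the cone $p_c(P_1)$ is unchanged, so no "scaled condition" appears. In this example $p_c((A_2+i)e_1)=-\tfrac34+\tfrac{c}{100}+i$ and $p_c((A_2+i)e_2)=\tfrac1{100}+c(-\tfrac34+i)$. The first is in the open second quadrant for $c<75$ and the second for $c>\tfrac1{75}$, so for every $c>0$ one of them lands inside $p_c(P_1)$.

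The missing idea is to group the sets the other way, as the paper does: apply Kallin's lemma to $K_0$ and $K_1\cup K_2$.

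\begin{itemize}
\item $K_1\cup K_2$ is polynomially convex by the hypothesis on $P_1\cup P_2$.
\item You now only need to separate $p(K_0)=[0,1]$ from $p(K_1)\cup p(K_2)$, and your own computations already do this. $p(K_2)$ meets $\R$ only at $0$, and $p(K_1)\cap\R\subset(-\infty,0]$ by your $-1<\det A_1$ computation.
\item Each $K_j$ is invariant under $z\mapsto rz$ for $r\in[0,1]$, and $p$ is homogeneous. Hence $p(K_1)\cup p(K_2)$ is compact and star-shaped about $0$, so its complement is connected and it equals its own hull.
\item It follows that $\widehat{p(K_0)}\cap\widehat{p(K_1\cup K_2)}=\{0\}$, with $0$ a boundary point of both, and $p^{-1}(0)\cap(K_0\cup K_1\cup K_2)=\{0\}$.
\end{itemize}

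In this route, $\det[A_1,A_2]>0$ serves only to select the symmetric normal form, and $\det A_2>0$ only makes $\Im p$ definite on $P_2$. No comparison of slopes between $p(P_1)$ and $p(P_2)$ is needed.
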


\begin{proof}
  Since $\det{A_1} < 0$, the matrix $A_1$ has real, non-zero
  eigenvalues of opposite signs. And $\det{[A_1,A_2]} > 0$.
  By \cite[Lemma 2.5]{goraithreeplanes},
  after a suitable change of coordinates:

  $$
    A_1 = \begin{bmatrix}
      \lambda_1 & 0 \\
      0 & \lambda_2
    \end{bmatrix}   \quad
    A_2 = \begin{bmatrix}
      s_1 & t \\
      t & s_2
    \end{bmatrix}
  $$

  We can write
  \begin{eqnarray*}
    P_0 &=& \{ (x,y) : x,y \in \R \}, \\
    P_1 &=& \{ ((\lambda_1 +i) x,(\lambda_2+i) y) : x,y \in \R \}, \\
    P_2 &=& \{ ((s_1+i) x + ty ,tx + (s_2+i) y) : x,y \in \R \}.
  \end{eqnarray*}
  Define $$p(z) := z_1^2 +  z_2^2.$$
  Let $
  K_j = \overline{B(0,1)} \cap P_j 
  $ for $j=0,1,2$. We wish to show $K_0 \cup K_1 \cup K_2$ is polynomially convex.

  Say $z \in K_0$, then $\Im (p(z)) = 0$. We have $p(K_0) \subset \{x: x\in \R \}$.

  Say $w\in K_1$ with the form
  $w = \left(  
  (\lambda_1 + i)x, (\lambda_2 + i)y
  \right)$. Then
  calculating  $p(w)$:
  \begin{eqnarray*}
    \Re{p(w)}&=& (\lambda_1^2-1) x^2 + (\lambda_2^2 -1)y^2, \\
  \Im{p(w)}&=& 2\lambda_1 x^2 + 2\lambda_2 y^2.
  \end{eqnarray*}

  If $\Im p(w) = 0$, then $x^2 = -\frac{\lambda_2}{\lambda_1} y^2$.
  In that case:
  \begin{eqnarray*}
    \Re p(w) &=& (\lambda_1^2 - 1)x^2 + (\lambda_2^2 - 1) y^2\\
    &=& (\lambda_1^2 - 1)\left(-\frac{\lambda_2}{\lambda_1} y^2 \right) + (\lambda_2^2 - 1) y^2\\
    &=& \frac{1}{\lambda_1} (-\lambda_1^2\lambda_2 + \lambda_2 + \lambda_1\lambda_2^2 - \lambda_1) y^2\\
    &=& \frac{1}{\lambda_1} (\lambda_1\lambda_2 (\lambda_2 - \lambda_1) + \lambda_2 - \lambda_1) y^2\\
    &=& \frac{1}{\lambda_1}(\lambda_2 - \lambda_1) (\lambda_1\lambda_2  + 1) y^2.
  \end{eqnarray*}
  Since $\det A_1 > -1$, $\lambda_1 \lambda_2 + 1 > 0$. Hence $\Re p(w) < 0$ when $\Im p(w) =0$ unless $w = (0,0)$.
  $$
  \widehat{p(K_2)} \cap \widehat{p(K_0)} = \{0\} \quad \& \quad p^{-1}\{0\} \cap (K_0 \cup K_2) = \{(0,0)\}.
  $$

  Say $w\in K_2$ with the form
  $w = \left((s_1+i)x+ty, tx+(s_2+i)y\right)$. Then
  calculating $\Im{p(w)}=2s_1x^2 + 2s_2y^2+4txy$.
  Since $\det{A_2} > 0$, we have $s_1 s_2 > t^2$.
  Set $a=s_1$, $b=s_2$ and  $c=t$. Then $ab>c^2$.
  By \Cref{lemma:am-gm} we get
  $\Im{p(z)} = 0$ if, and only if, $z=0$.
  $$
  \widehat{p(K_1)} \cap \widehat{p(K_0)} = \{0\} \quad \& \quad p^{-1}\{0\} \cap (K_0 \cup K_1) = \{(0,0)\}.
  $$
  Finally, take $K=  K_1 \cup K_2$ , then
  $$
  \widehat{p(K)} \cap \widehat{p(K_0)} = \{0\} \quad \& \quad p^{-1}\{0\} \cap (K_0 \cup K) = \{(0,0)\}.
  $$
  As $K$ is polynomially convex. By Kallin's \Cref{lemma:Kallins-poly}, $K_0 \cup K_1 \cup K_2$ is polynomially convex.

\end{proof}

\section{The union of finitely many totally real subspaces}
\label{section:union-of-finitely-many-totally-real-subspaces}

The following proof is inspired by techniques in Manne \cite{manne1994carlemantwo}.

\begin{proposition}
  \label{prop:sqeeze}
  Let $L_1, ..., L_m$ be as in \Cref{thm:main}. Fix $R > 1$, and let
  $$
  K =  \{z\in \cup_{j=1}^m L_j : |z| \leq R\} \text{ and }
  K_j =\{z\in  L_j : 1 \leq |z| \leq R\}.
  $$
  Then there exists $\delta > 0$ such that
  for any continuous function $f \in \smoo(\cup_{j=1}^m L_j) $ with compact support in $\cup_{j=1}^m K_j$ and 
  any $\epsilon > 0$ there is an entire function $h \in \holo(\C^n)$ such that
  $\|h-f\|_{K} < \epsilon $ and $|h(z)| < \epsilon$ for $|z|\leq \delta$.
\end{proposition}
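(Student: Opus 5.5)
The plan is to reduce the statement to an approximation theorem on a compact set that is polynomially convex and totally real away from a small ball, rather than gluing Gaussian integrals as in \Cref{prop:lipshitz-one-approx}. The direct gluing would take a partition $f=\sum_j f_j$ with $\operatorname{supp} f_j\subset K_j$ and a Gaussian integral $h_{j,t}$ for each $f_j$, in coordinates where $L_j=\R^n$. It is unclear that $h_{j,t}$ is small on the other subspaces $L_k$. Writing $L_k=(A+iI)\R^n$ in those coordinates, $\Re(z-w)^2=|Ax-w|^2-|x|^2$ for $z=(A+iI)x$, and this need not be positive. So the sets $A_\delta$ of \Cref{prop:lipshitz-one-approx} need not contain $L_k\cap\{|z|\le R\}$. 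That piece of Manne's argument does go through near the origin, and it is where $\delta$ will come from.

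\textbf{Step 1 (choice of $\delta$).} I would choose $\delta\in(0,1/2)$ so that $X:=K\cup\overline{B(0,\delta)}$ is polynomially convex. The set $\bigcup_j L_j$ is a cone, and by the Remark after \Cref{thm:main} every compact subset of it is polynomially convex. By hypothesis there is a ball $\overline{B(0,r)}$ with $\widehat{K\cap \overline{B(0,r)}}=K\cap\overline{B(0,r)}$. I would first use the dilation invariance of the cone together with local polynomial convexity at $0$ to build a polynomial separating $K\cap\{|z|\ge r/2\}$ from a small neighbourhood of $0$. The aim is to show $\widehat X\subset K\cup\overline{B(0,\delta)}$ for small $\delta$, concretely through Kallin's \Cref{lemma:Kallins-poly} or a plurisubharmonic exhaustion argument. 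This is the step I expect to be the main obstacle. If it fails for the round ball, I would replace $\overline{B(0,\delta)}$ by a small polynomially convex neighbourhood $W\ni 0$ with $\overline{B(0,\delta)}\subset W$. Such a $W$ is a sublevel set of a polynomial polyhedron, which exists by local polynomial convexity, and the conclusion only requires $|h|<\epsilon$ on $\{|z|\le\delta\}$.

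\textbf{Step 2 (the approximation).} Define $g$ on $X$ by $g=f$ on $K$ and $g=0$ on $\overline{B(0,\delta)}$. This is well defined and continuous because $\operatorname{supp} f\subset\{|z|\ge1\}$ and $\delta<1$. On a neighbourhood of $\overline{B(0,\delta)}$ the function $g$ vanishes, so it is holomorphic there. The set $X\setminus\overline{B(0,\delta)}$ is a finite disjoint union of pieces of the totally real manifolds $L_j\setminus\{0\}$. I would then invoke the O'Farrell--Preskenis--Walsh theorem, in the form also used by Manne \cite{manne1994carlemantwo}. If $X$ is compact and polynomially convex, $X_0\subset X$ is compact, $X\setminus X_0$ is a totally real submanifold, and $g\in\smoo(X)$ satisfies $g|_{X_0}\in\mathscr P(X_0)$, then $g\in\mathscr P(X)$. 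Here $g|_{X_0}\equiv0$, so a polynomial $h$ exists with $\|h-g\|_X<\epsilon$. This gives $\|h-f\|_K<\epsilon$ and $|h|<\epsilon$ on $|z|\le\delta$.

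Finally, $\delta$ depends only on $L_1,\dots,L_m$ and $R$ through Step 1, not on $f$ or $\epsilon$, which is exactly the uniformity the proposition requires.
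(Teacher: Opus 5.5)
Step 2 would be fine if $X=K\cup\overline{B(0,\delta)}$ were polynomially convex. The gap is Step 1: it is not just unproved, it is false for configurations satisfying the hypotheses of \Cref{thm:main}, so the plan cannot be completed.

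\textbf{Counterexample to Step 1.} Take $n=2$ and $A=\begin{pmatrix}0&-1/2\\ 1/2&0\end{pmatrix}$. Its eigenvalues $\pm i/2$ satisfy Weinstock's condition, so $\R^2\cup M(A)$ is admissible; it is even Manne's case.
\begin{itemize}
\item In the unitary coordinates $w=\frac{1}{\sqrt2}(z_1+iz_2,\,z_1-iz_2)$ one gets $\R^2=\{(\zeta,\bar\zeta)\}$ and $M(A)=\{(\zeta,-\tfrac13\bar\zeta)\}$. Balls about $0$ are unchanged.
\item Fix $\delta\in(0,\tfrac12)$ and put $\sigma=-\tfrac25\delta^2$. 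Parametrize $V=\{w_1w_2=\sigma\}$ by $w_1=\zeta\in\C^*$, so $|w|^2=|\zeta|^2+|\sigma|^2/|\zeta|^2$ on $V$.
\item Then $V\cap\overline{B(0,\delta)}=\{\tfrac15\delta^2\le|\zeta|^2\le\tfrac45\delta^2\}$.
\item $V$ misses $\R^2$, since there $w_1w_2=|\zeta|^2\ge0$.
\item $V\cap M(A)$ is the whole circle $|\zeta|^2=\tfrac65\delta^2$. Its points have $|w|=\tfrac{2}{\sqrt3}\delta<R$, so they lie in $K$.
\end{itemize}
For any polynomial $P$, apply the maximum principle to $\zeta\mapsto P(\zeta,\sigma/\zeta)$ on the annulus $\tfrac45\delta^2\le|\zeta|^2\le\tfrac65\delta^2$. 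Every point of $V$ with $\tfrac45\delta^2<|\zeta|^2<\tfrac65\delta^2$ therefore lies in $\widehat{K\cup\overline{B(0,\delta)}}$. Yet such a point is outside $\overline{B(0,\delta)}$ and off $\R^2\cup M(A)$.

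So $K\cup\overline{B(0,\delta)}$ is polynomially convex for no $\delta\in(0,\tfrac12)$. Dilation invariance cannot rescue this, because the defect reappears at every scale. Your fallback does not help either. For compact $W\supset\overline{B(0,\delta)}$, the set $K\cup W$ can be polynomially convex only if $W$ already contains these annuli. Choosing such a $W$ requires knowing where $\widehat{K\cup\overline{B(0,\delta)}}$ lies, which is exactly what is missing.

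\textbf{What the paper does instead.} The missing idea is to never need polynomial convexity of $K\cup\overline{B(0,\delta)}$. Your objection to the Gaussian integrals is correct, but the paper only needs $\tilde h^{(t)}_j$ to be small on the overlap $\tilde W_j$ of two sets.
\begin{itemize}
\item $\tilde U_j$ consists of points within $2\gamma$ of $\phi_j(K_j)$, and $\tilde V_j$ of points farther than $\tfrac32\gamma$.
\item Points of $\tilde W_j$ lie within $\gamma$ of $\R^n$ and are far from $\phi_j(K_j)$ in the real directions. Hence $\operatorname{Re}(z-u)^2>\gamma^2/4$ for $u\in\phi_j(K_j)$.
\item $\tilde U_j$ never meets $\phi_j(L_k)$ for $k\ne j$, which is exactly how your objection is sidestepped.
\item Cousin I on a polynomial polyhedron $\Omega\supset K$, with the open mapping theorem, splits $\tilde h^{(t)}_j\circ\phi_j=\tilde\alpha^{(t)}_j-\tilde\beta^{(t)}_j$ with both pieces tending to $0$.
\item This gives functions in $\holo(\Omega)$ converging to $f$ on $K$ and to $0$ on $\overline{B(0,\delta)}\Subset V_j$. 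Oka--Weil on $\Omega$ then turns them into polynomials.
\end{itemize}
Here the hull of $K\cup\overline{B(0,\delta)}$ only has to lie inside $\Omega$.

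\textbf{How your route could be repaired.} To keep the O'Farrell--Preskenis--Walsh approach, apply it to $Y=\widehat{K\cup\overline{B(0,\delta)}}$ with $X_0=Y\cap\overline{B(0,s)}$ for some $s<1$. That requires first proving $Y\setminus K\subset B(0,s)$, so that $g$ extends by $0$ and $Y\setminus X_0$ is totally real. This localization of the hull is a separate statement, and your proposal gives no argument for it.
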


\begin{proof}
  
  Fix $j\in \{1,...,m\}$. Let $\phi_j$ represent a $\C$-linear automorphism of $\C^n$ that maps the maximal totally real plane $L_j$ to $\R^n$.
  Then $\phi_j(K_j)$ is a subset of $\R^n$.
  We may use \Cref{prop:lipshitz-one-approx} with $\psi \equiv 0$ for approximating $f$ on $\phi_j(K_j)$.
  Before doing so, we will define some neighborhoods of $K_j$.
  Choose $\gamma > 0$ sufficiently small such that
  $ d(\phi_j(K_j), \phi_j(L_k)) > 3 \gamma $ holds
  for  all $k \in \{1,...,m\}$ with $k\neq j$.
  This follows as $\phi_j$ is a homeomorphism, 
  $K_j$ is compact, $L_k$ is closed, and $K_j \cap L_k = \varnothing$.

  Define a neighborhood 
  $ \Omega_o = \{ z\in \C^n : d(z,K)<\eta \} $ of $K$
  where $\eta > 0$ is taken small enough to ensure
  $ \phi_j(\Omega_o) \subset \{z\in \C^n : d(z, \phi_j(K)) < \gamma \} $
  for $j \in \{1,...,m\}$. This follows from the continuity of each $\phi_j$.
  
  \newcommand{\U}{\tilde{U}_j}
  \newcommand{\V}{\tilde{V}_j}
  \newcommand{\W}{\tilde{W}_j}
  For $j \in \{1,...,m\}$, define two open sets $\U$ and $\V$  such that
  they cover $\phi_j(K)$:
  $$
  \U = \left\{ z \in \C^n: d\left(z,\phi_j(K)\right) < \gamma \text{ and } d\left(z, \phi_j(K_j)\right) < 2\gamma \right\},
  $$
  $$
  \V = \left\{ z \in \C^n: d\left(z,\phi_j(K)\right) < \gamma \text{ and } d\left(z, \phi_j(K_j)\right) > \frac{3}{2}\gamma \right\}.
  $$

  Observe that, $\phi_j(K_j) \subset \U$, $\phi_j(L_k\cap K) \subset \V$ for $k\neq j$ and
  $0\in \V$.

  \newcommand{\h}{\tilde{h}_j^{(t)}}
  \renewcommand{\a}{\tilde{\alpha}_j^{(t)}}
  \renewcommand{\b}{\tilde{\beta}_j^{(t)}}
  Define $\h: \C^n \to \C^n$ as
  $$
  \h(z)  :=  \left(\frac{1}{t\sqrt{\pi}}\right)^n \int_{\R^n} f\circ \phi_j^{-1}(u) e^{-\frac{(z-u)^2}{t^2}}du.
  $$
  Consider all limits as $t\to 0^+$.
  Now by \Cref{prop:lipshitz-one-approx}, we have $\h \in \holo(C^n)$,
  and $\h \to f\circ \phi_j^{-1}$
  uniformly  on $\phi_j(L_j) = \R^n$.
  Let $\W = \V \cap \U$,
  we will show $\W \subset A_{\frac{\gamma^2}{4}}$ with
  the notation of \Cref{prop:lipshitz-one-approx}; this gives us
  $\h \to 0$ uniformly on $\W$.
  Let $z\in \W$, $u\in \phi_j(K_j) \subset \R^n$,
  say $z=(x_1+i y_1, ..., x_n+i y_n)$ and $u = (u_1, ..., u_n) \in \R^n$.
  \begin{eqnarray*}
    Re(z-u)^2 &=& Re \sum_{l=1}^n (z_l - u_l)^2 \\
    &=& \sum_{l=1}^n (x_l - u_l)^2 - \sum_{l=1}^n y_l^2
  \end{eqnarray*}
  As $z\in\W \subset \V$, $d(z,u) > \frac{3}{2}\gamma$ which gives
  $$
    \sum_{l=1}^n (x_l - u_l)^2 + \sum_{l=1}^n y_l^2 > \frac{9}{4}\gamma^2
  $$
  By definition of $\U$, $d(z,\phi_j(K))<\gamma$ and $d(z,\phi_j(K_j)) < 2 \gamma$.
  By the choice of $\gamma$, $d(\phi_j(K_j),\phi_j(L_k)) > 3\gamma$ for $k\neq j$; this implies
  $d(z,\phi_j(L_k))>\gamma$. This gives us
  $$
    \sum_{l=1}^n y_l^2 =  (d(z,\R^n))^2 = (d(z,\phi_j(L_j)))^2 < \gamma^2.
  $$
  Combining both the inequalities, $\sum_{l=1}^n (x_l - u_l)^2 > \frac{5}{4}\gamma^2$.
  Finally, $$
    Re(z-u)^2 = \sum_{l=1}^n (x_l - u_l)^2 - \sum_{l=1}^n y_l^2
    > \frac{5}{4} \gamma ^2 - \gamma^2 = \frac{\gamma^2}{4}
  $$
  This proves $\W \subset A_{\gamma^2/4}$ and $\h \to 0$ uniformly (\Cref{prop:lipshitz-one-approx}).

  The set $K$ is polynomially convex as it is a compact subset of $\cup L_j$.
  We can find a polynomial polyhedron $\Omega$, which is a pseudoconvex domain, such that $K \subset \Omega \subset \Omega_o$.

  For $j\in\{1,...,m\}$,
  set $U_j = \Omega \cap \U$, 
  $V_j = \Omega \cap \V$ and
  $W_j = V_j \cap U_j = \Omega \cap \phi_j^{-1}(\W)$.
  Then $U_j \cup V_j = \Omega$ and $\h \circ \phi_j \to 0$ uniformly on $W_j$.

  Consider the linear operator $T: \holo(U_j) \bigoplus \holo(V_j) \to \holo(W_j)$
  given by
  $$T(\alpha, \beta) = (\alpha - \beta)|_{W_j}$$
  The Cousin I problem can be solved on the pseudoconvex set $\Omega$ with $\{U_j, V_j\}$;
  This shows the surjectivity of $T$.
  Which allows us to use the open mapping theorem for Fréchet spaces.
  As $\h \circ \phi_j \to 0$ in $\holo(W_j)$, we can solve for $(\a, \b) \in \holo(U_j) \bigoplus \holo(V_j)$
  with
  $$T(\a,\b) = (\a-\b)|_{W_j} = \h \circ \phi_j,$$
  such that $(\a, \b) \to 0$, i.e., $\a \to 0$ in $\holo(U_j)$ and $\b \to 0$ in $\holo(V_j)$.

  \newcommand{\hjt}{h_j^{(t)}}
  \vspace{1em}
  We have seen that $0 \in V_j$ for all $j$.
  Take $\delta > 0$ such that the closed ball $B(0,\delta) \Subset V_j$ for all $j$.
  Define $\hjt \in \holo(\Omega)$ with
  $\hjt = \h \circ \phi_j -\a$ on $U_j$ and $\hjt = -\b$ on $V_j$. This is well defined
  due to the Cousin's I problem.

  Then $\hjt \to 0$ in $\holo(V_j)$; this gives: 
  $\hjt \to 0$ uniformly on $L_k \cap K  \Subset V_j$ for $k\neq j$,
  and $\hjt \to 0$ uniformly on $B(0,\delta)   \Subset V_j$ for $k\neq j$.
  Further,
  $\hjt \to f$ uniformly on compact subsets of
  $L_j \cap V_j$ as $f=0$ on $L_j - K_j$.

  The entire function $\h \circ \phi \to f$ uniformly on $L_j$ and
  $\a \to 0$ in $\holo(U_j)$ which implies
  $\hjt \to f$ uniformly on compact subsets of $L_j \cap U_j$.
  Combining with the previous argument, $\hjt \to f$ uniformly on $L_j \cap K$,
  $\hjt \to 0$ uniformly on $L_k \cap K$ for $k\neq j$,
  and $\hjt \to 0$ uniformly on $B(0,\delta)$.

  \renewcommand{\ht}{h^{(t)}}
  Define $\ht := \sum_{j=1}^{n} \hjt$.
  Then $\ht \to f$ uniformly on $L_j \cap K$ for all $j$.
  Hence, $\ht \to f$ uniformly on $K$ and $\ht \to 0$ uniformly on $B(0,\delta)$.

\end{proof}

\begin{corollary}
  \label{cor:vanish}
  For $R_o>1$, we can choose $\delta$ for which the following holds.
  If $f_o \in \smoo(\cup L_j)$ such that $f_o(z)=0$ for $|z| \leq r$ for some $r>0$.
  Then for any $\epsilon > 0$,
  there exists $h_o \in \holo(\C^n)$ such that $|h_o(z)-f_o(z)|<\epsilon$
  for $z \in \left(\cup L_j \right) \cap  \{z\in \C^n: |z| \leq rR_o\} $, and
  $|h_o(z)|<\epsilon$ for $|z|\leq r\delta$.
\end{corollary}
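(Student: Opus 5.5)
The plan is to reduce the corollary to \Cref{prop:sqeeze} by a dilation, using that the set $\bigcup L_j$ is a cone, i.e. invariant under $z\mapsto \lambda z$ for $\lambda>0$. First fix $R_o>1$ and apply \Cref{prop:sqeeze} with $R=R_o$; this gives $K=\{z\in\bigcup L_j: |z|\le R_o\}$, $K_j=\{z\in L_j: 1\le |z|\le R_o\}$ and a constant $\delta>0$ depending only on $R_o$ and the subspaces $L_1,\dots,L_m$. The claim is that this same $\delta$ works for every $r>0$ and every $f_o$.

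Given $f_o$ vanishing on $|z|\le r$ and $\epsilon>0$, define $g(z)=f_o(rz)$ on $\bigcup L_j$. This is well defined and continuous because $r\bigcup L_j=\bigcup L_j$, and $g$ vanishes for $|z|\le 1$. The proposition requires compact support in $\bigcup K_j$, so I would cut off: choose a continuous $\chi$ on $\bigcup L_j$ with $\chi=1$ on $|z|\le R_o$ and $\chi=0$ for $|z|\ge R'$, and run the proposition with $R'>R_o$ in place of $R_o$ (and the corresponding $\delta$). Equivalently, one may take $R$ slightly bigger than $R_o$ from the start and fix $\delta$ for that $R$. Then $\chi g$ is continuous with support in $\{1\le |z|\le R\}\cap\bigcup L_j=\bigcup K_j$. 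The point to be careful about is that the support condition near the inner sphere $|z|=1$ only says $g=0$ on the closed unit ball, which is fine because supp is then contained in $\{|z|\ge 1\}$.

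Next apply \Cref{prop:sqeeze} to $\chi g$ and $\epsilon$ to get an entire $h$ with $|h-\chi g|<\epsilon$ on $K\supset\{z\in\bigcup L_j:|z|\le R_o\}$ and $|h|<\epsilon$ on $|z|\le\delta$. Finally set $h_o(z)=h(z/r)$, which is entire. For $z\in\bigcup L_j$ with $|z|\le rR_o$ we have $z/r\in\bigcup L_j$, $|z/r|\le R_o$, so $\chi(z/r)=1$ and $|h_o(z)-f_o(z)|=|h(z/r)-g(z/r)|<\epsilon$. For $|z|\le r\delta$ we get $|h_o(z)|=|h(z/r)|<\epsilon$.

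The only real obstacle is the independence of $\delta$ from $r$ and $f_o$. It holds because \Cref{prop:sqeeze} produces $\delta$ before $f$ and $\epsilon$ are chosen, and the dilation invariance of the cone transports everything to scale $r$. The compact-support bookkeeping via the cutoff $\chi$ with slightly enlarged $R$ is the other point needing care.
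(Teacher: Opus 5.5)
Your proof is correct and follows the same route as the paper: rescale $f_o$ so it vanishes on the closed unit ball, cut it off radially so that its support lies in $\bigcup K_j$ for some $R>R_o$ (the paper takes $R=R_o+1$), apply Proposition~\ref{prop:sqeeze} with its $\delta$, which is fixed before $f$ and $\epsilon$, and then rescale back. Your dilation direction, $g(z)=f_o(rz)$ and $h_o(z)=h(z/r)$, is the right one; the paper's written proof uses $f_o(z/r)$ and $h(rz)$, which is inverted and would only give the estimates on $|z|\le R_o/r$ and $|z|\le \delta/r$.
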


\begin{proof}
  We define
  $$f(z) := f_o(z/r) \rho_{R_o}(|z|),$$
  where $\rho_{R_o}: [0,+\infty) \to [0,1]$ is a continuous function that is
  $1$ on $[0,R_o]$ and $0$ on $[R_o+1, + \infty)$. Let $R=R_o+1$;
  it follows that $\text{supp}(f_o) \subset \cup K_j$ with the notations of
  \Cref{prop:sqeeze}. By the proposition, we have $\delta>0$ corresponding
  to $R$. Then there is an entire function $h\in \holo(\C^n)$ such that
  $|f - h|_K < \epsilon $ and $|h|_{B(0,\delta)} < \epsilon$.
  Define another entire function $h_o(z) := h(rz)$. We have $|f(z) - h(z)| < \epsilon$ for
  $z \in (\cup L_j) \cap \{z\in\C^n: |z| \leq R\}$.
  Replacing $f$ with $f_o$, we get
  $|f_o(z/r) - h_o(z/r)| < \epsilon$ for
  $z \in (\cup L_j) \cap \{z\in\C^n: |z| \leq R_o\}$.
  Hence,
  $|f_o(z) - h_o(z)| < \epsilon$ for
  $z \in (\cup L_j) \cap \{z\in\C^n: |z| \leq rR_o\}$ and 
  $|h_o(z)| < \epsilon$ for
  $|z| \leq r\delta$.
\end{proof}

\begin{proof}[Proof of \Cref{thm:main}]
  We are given $f\in \smoo(\cup L_j)$ and $\epsilon \in \smoo(\cup L_j, \R_{>0})$.
  Pick $R>1$ and choose $\delta$ from \Cref{cor:vanish}.
  Take an increasing sequence of positive real numbers $\{r_\eta\}$ such that
  $r_{\eta+2} = R r_\eta$. We define continuous functions $\rho_\eta: [0,+\infty) \to [0,1]$
  such that $\rho_\eta(t) = 0$ for $t\in[0,r_\eta]$ and $\rho_\eta(t) = 1$ for $t\in [r_{\eta + 1}, +\infty)$.

  Consider the exhaustion of $\cup L_j$ by compacts $K_\eta = \{z\in \cup L_j : |z| \leq r_\eta R\}$.
  Then the inner closed ball for $K_\eta$ in \Cref{cor:vanish} will be
  $B_\eta = B(0,r_\eta \delta)$. Set $\epsilon_\eta = \min_{t\in K_\eta}{\epsilon(t)}$.

  The set $K_0$ is polynomially convex by local polynomial convexity of $\cup L_j$ at the origin.
  Further, we have $\mathcal P (K_0) = \mathcal C(K_0)$.
  There exists an entire function $h_0$ such that
  $$
    |h_0(z) - f(z)| < \epsilon_0/2 \quad \forall z \in K_0.
  $$
  We want an entire function $h_1$ such that
  $$
    |h_1(z) - \rho_0(|z|)(f(z) - h_0(z) )| < \epsilon_1/2^2 \quad \forall z \in K_1
  $$ and $|h_1|_{B_1} \leq \epsilon_1/2^2$. Such a $h_1$ can be found using
  \Cref{cor:vanish} as  $f_o(z) = \rho_0(|z|)(f(z) - h_0(z) )$ satisfies the conditions
  with $r = r_0$.
  This process can be done inductively to obtain entire function $h_\eta$  such that
  $$
    \left|h_\eta(z) - \rho_\eta(|z|)(f(z) -\sum_{l=1}^{\eta-1} h_l(z) )\right| < \epsilon_\eta/2^{\eta+1} \quad \forall z \in K_\eta
  $$
  and $|h_\eta|_{B_\eta} < \epsilon_\eta/2^{\eta+1}$.
  Define $h = \sum_{\eta \in \mathbb{N}} h_\eta$. Given any compact set $A\subset \C^n$, we can find $\eta$
  such that $A\subset B_\eta$. Then $h$ converges uniformly on $A$ as
  $|\sum_{j>\eta} h_j|_{A} < \epsilon_\eta/2^{\eta}$.
  Hence, $h$ is an entire function.
  We are done if we show $|h(z)-f(z)|< \epsilon(z)$ for $z\in \cup \L_j$.
  Take any $z\in \cup L_j$,
  say $r_{\eta_o} < |z| \leq r_{\eta_o +1}$.

  \begin{equation*}
    |h(z) - f(z)|
    = \left|\sum_{\eta =0}^{\infty} h_\eta(z)  - f(z) \right| \leq  \sum_{\eta > \eta_o}^{\infty} |h_\eta(z)| + \left|\sum_{\eta =0}^{\eta_o} h_\eta  - f(z) \right|
  \end{equation*}
  \begin{eqnarray*}
     &\leq&  \epsilon/2^{\eta_o+1} + \left| h_{\eta_o}(z) - \rho_{\eta_o}(z) \left( f(z) - \sum_{\eta=0}^{\eta_o-1}h_\eta(z)\right)\right| 
      + (1-\rho_{\eta_o}(z))\left|\sum_{\eta =0}^{\eta_o - 1 } h_\eta(z)  - f(z) \right| \\
     &\leq&  \epsilon(z)/2^{\eta_o+1} + \epsilon(z)/2^{\eta_o+1} 
      + (1-\rho_{\eta_o}(z))\left|h_{\eta_o -1} - \rho_{\eta_o -1}(z)\left(f(z) - \sum_{\eta =0}^{\eta_o - 2 } h_\eta(z)   \right) \right| \\
     &\leq&  \epsilon(z)/2^{\eta_o+1} + \epsilon(z)/2^{\eta_o+1} + \epsilon(z)/2^{\eta_o} \leq \epsilon(z)
  \end{eqnarray*}
    \end{proof}

\section{The union of two Lipschitz graphs in $\cplx^2$}
\label{section:union-of-lipschitz-graphs}

\subsection{Polynomial convexity of the union of two Lipschitz graphs}

\label{subsection:poly-cnx-union-of-lipschitz-graphs}

In this section, 
we will study the  polynomial convexity of $\L_1 \cup \L_2$
where $\L_1$ and $\L_2$ are Lipschitz graphs as given in \Cref{twoLipschitz}.

We will start by showing the polynomial convexity of the union of two Lipschitz graphs in $\C^2$.
The proof of this theorem generalizes the calculation done in \cite[Theorem 1.2]{gorai2011local}.

\begin{proof}[Proof of \Cref{prop:poly-cnv-graphs}]
  The matrix $A$ is $2 \times 2$ real valued. Then $A$ is similar to one of the following real matrix:
  \begin{eqnarray}
    \begin{bmatrix}
      \lambda & 1 \\
      0 & \lambda 
    \end{bmatrix} & \lambda \in \R  \label{eqn:same-eigen-block-form}, \\
    \begin{bmatrix}
      \lambda_1 & 0 \\
      0 & \lambda_2
    \end{bmatrix} & \lambda_1, \lambda_2 \in \R \label{eqn:diag-form}, \\
    \begin{bmatrix}
      s & -t \\
      t & s
    \end{bmatrix} & s, t \in \R \label{eqn:img-root-form},
  \end{eqnarray} 

This can be showed by looking at the different possible Jordan forms
for a $2\times 2$ matrix with real entries. If the roots for the
characteristic polynomial are imaginary then we obtain (\ref{eqn:img-root-form}).
If the roots are real and $A$ is not diagonizable, then we obtain (\ref{eqn:same-eigen-block-form}).
Finally, if the roots are real and $A$ is diagonizable, then we obtain (\ref{eqn:diag-form}).

There is an invertible complex $2 \times 2$ matrix such that $SAS^{-1}$ has one
of the above forms. First, we look at what happens to $(A+iI)\R^n$ under the action of
$S$. Say $x\in \R^n$, then $S(A+iI)x = SAS^{-1}y+iSIS^{-1}y = (SAS^{-1}+iI)y$, where $y=S^{-1}x$.
Which gives $(SAS^{-1}+iI)\R^n = S((A+iI)\R^n)$.
We will study the action of $S$ on $\L_1$ and $\L_2$:
\begin{eqnarray*}
  S\L_1 &=& \{ S(x+ i\psi_1(x)): x\in\R^n \} \\
  &=& \{ S(x) + iS(\psi_1(x)): x\in\R^n \} \\
  &=& \{ y + iS(\psi_1(S^{-1}y)): y\in\R^n \}.
\end{eqnarray*}
Similarly for $\L_2$:
\begin{eqnarray*}
  S\L_2 &=& \{ S((A+iI)(x+ i\psi_2(x))): x\in\R^n \} \\
  &=& \{ S((A+iI)(S^{-1}y+ i\psi_2(S^{-1}y))): y\in\R^n \} \\
  &=& \{ (SAS^{-1}+iI)y+ i(SAS^{-1}+iI)S\psi_2(S^{-1}y): y\in\R^n \}.
\end{eqnarray*}
If we set $A' = SAS^{-1}$, $\psi_j'(x)=S\psi_j(S^{-1}x)$, and $\alpha_j' =\|S\|\|S^{-1}\|\alpha_j$. Then the above has the form
\begin{eqnarray*}
  \L_1' &=& \{ (x+ i\psi_1'(x))): x\in\R^n \}, \\
  \L_2' &=& \{ (A'+iI)(x+ i\psi_2'(x))): x\in\R^n \}.
\end{eqnarray*}
The Lipschitz condition will still be followed with the new constants $\alpha_j'$
\begin{eqnarray*}
  |\psi_j'(x) - \psi_j'(y)| &=& |S\psi_j(S^{-1}x) -  S\psi_j(S^{-1}y)| \\
  &\leq& \|S\| |\psi_j(S^{-1}x) -  \psi_j(S^{-1}y)|  \\
  &\leq& \|S\| \alpha_j |S^{-1}x - S^{-1}y|  \\
  &\leq& \alpha_j \|S\|\|S^{-1}\| |x - y| 
  \leq \alpha_j' |x - y|.
\end{eqnarray*}
Hence, we can consider $A$ to be one of the three forms given above.
We will work cases for each of form: (\ref{eqn:same-eigen-block-form}), (\ref{eqn:diag-form}),
and (\ref{eqn:img-root-form}).

\textbf{Case 1:} Let $A$ be of type (\ref{eqn:same-eigen-block-form}).

Let $\lambda \in \R$
$$
A = \begin{bmatrix}
  \lambda & 1 \\
  0 & \lambda
\end{bmatrix}.
$$
We will show any compact set $K \subset \L_1 \cup \L_2$ is polynomially convex. We can always decompose $K = K_1 \cup K_2$
for compacts sets $K_1 \subset \L_1$ and $K_2 \subset \L_2$.

Define $G:\C^2 \times \C^2 \to \C$ as
$$
G(z,w) = z_1w_1 + z_2 w_2.
$$
Consider the polynomial $p:\C^2 \to \C$ given by
$$
p(z) := G((A-iI)z,z).
$$
More explicitly, if $z=(z_1, z_2) \in \C^2$ then $p(z)$ simplifies to
$$
p(z) = z_{1} \left(z_{1} \left(\lambda - i\right) + z_{2}\right) + z_{2}^{2} \left(\lambda - i\right).
$$
We will show
\begin{eqnarray}
p(K_1) \subset \{z\in \mathbb{C}: \text{Im}(z)<0\} \cup \{0\} \label{eqn:type-1-phi-1-aim},\\
p(K_2)\subset \{z\in \mathbb{C}: \text{Im}(z)>0\} \cup \{0\} \label{eqn:type-1-phi-2-aim},
\end{eqnarray}
for an appropriate choice of $\alpha_1>0$ and $\alpha_2>0$.

Let $z\in \L_1$. Then $z=(x,y) + i\psi_1(x,y)$ for some $(x,y)\in \R^2$.
Solving for $\text{Im}(p(z))$, we get
\begin{dmath}
$$
\text{Im}(p(z)) = 
 \left(\psi_1^{(1)}(x,y)\right)^{2} 
 + \left(\psi_1^{(2)}(x,y)\right)^{2} 
 + 2 \psi_1^{(1)}(x,y) \lambda x 
 + 2 \psi_1^{(2)}(x,y) \lambda y 
 + \psi_1^{(1)}(x,y) y 
 + \psi_1^{(2)}(x,y) x
 - x^{2} - y^{2}
  $$ \label{eqn:type-1-phi-1},
\end{dmath}
where $\psi_1(x,y) = (\psi_1^{(1)}(x,y), \psi_1^{(2)}(x,y))$.

By the Lipschitz condition on $\psi_1$, we have
\begin{eqnarray*}
  \|\psi_1((x,y)) - \psi_1((0,0))\| &\leq& \alpha_1 \|(x,y) - (0,0)\|, \\
  \|\psi_1((x,y))\| &\leq& \alpha_1 \|(x,y)\|.
\end{eqnarray*}
Set $v=\|(x,y)\|$. Then $(\psi_1^{(1)}(x,y))^2+(\psi_1^{(2)}(x,y))^2 \leq \alpha_1^2 v^2$.
Further, $|x| \leq v$, $|y| \leq v$, $|\psi_1^{(1)}(x,y)| \leq \|\psi_1(x,y)\| \leq \alpha_1 v$,
and $|\psi_1^{(2)}(x,y)| \leq \|\psi_1(x,y)\| \leq \alpha_1 v$.
Now we make use of inequality (\ref{eqn:type-1-phi-1}). 
$$
\text{Im}(p(z)) \leq v^2 (\alpha_1^2 + 4 \alpha_1 \lambda + 2 \alpha_1 - 1).
$$
Choose $\alpha_1$ small enough such that: 
$$ \alpha_1^2 + 4 |\lambda| \alpha_1 + 2 \alpha_1 < 1.$$
Then $\text{Im}(p(z)) < 0 $ for $z \neq 0 \in \L_1$.

Now we look at the other Lipschitz graph. Let $z\in \L_2$. Then $z=(A+iI)(x,y) + i(A+iI)\psi_2(x,y)$ for some $(x,y)\in \R^2$.
If we solve for the explicit form of $z$, we get
$$
z=\left[\begin{matrix}i \psi^{(2)}(x,y) + y + \left(\lambda + i\right) \left(i \psi^{(1)}(x,y) + x\right)\\\left(\lambda + i\right) \left(i \psi^{(2)}(x,y) + y\right)\end{matrix}\right].
$$

Now solving for $\text{Im}(p(z))$:
\begin{dmath}
$$
\text{Im}(p(z)) =
- \left(\psi_2^{(1)}(x,y)\right)^{2} (1+\lambda^{2})
- \left(\psi_2^{(2)}(x,y)\right)^{2} (1+\lambda^{2})
- 2 \psi_2^{(1)}(x,y) \psi_2^{(2)}(x,y) \lambda 
+ 2 \psi_2^{(1)}(x,y) \lambda^{3} x 
+ 3 \psi_2^{(1)}(x,y) \lambda^{2} y 
+ 2 \psi_2^{(1)}(x,y) \lambda x 
+ \psi_2^{(1)}(x,y) y 
+ 2 \psi_2^{(2)}(x,y) \lambda^{3} y 
+ 3 \psi_2^{(2)}(x,y) \lambda^{2} x 
+ 6 \psi_2^{(2)}(x,y) \lambda y 
+ \psi_2^{(2)}(x,y) x 
+ \lambda^{2} x^{2} 
+ \lambda^{2} y^{2} 
+ 2 \lambda x y 
+ x^{2} 
+ y^{2}.
$$
\end{dmath}
Again, we obtain similar inequalities.
Set $v=\|(x,y)\|$. Then $(\psi_2^{(1)}(x,y))^2+(\psi_2^{(2)}(x,y))^2 \leq \alpha_2^2 v^2$.
Further, $|x| \leq v$, $|y| \leq v$, $|\psi_2^{(1)}(x,y)| \leq \|\psi_2(x,y)\| \leq \alpha_2 v$,
and $|\psi_2^{(2)}(x,y)| \leq \|\psi_2(x,y)\| \leq \alpha_2 v$.
$$
\text{Im}(p(z)) \geq
- [\alpha_2^2(|\lambda| + 1)^2 + \alpha_2 (4 |\lambda|^3 + 6 |\lambda|^2 + 10 |\lambda| + 2)] v^2
+ \lambda^2 x^2 + y^2 + (\lambda y + x)^2.
$$
If $\lambda > 1$, then $\lambda^2 x^2 + y^2  = v^2 + (\lambda^2 - 1) y^2$. Then
$$
\text{Im}(p(z)) \geq
- [\alpha_2^2(|\lambda| + 1)^2 + \alpha_2 (4 |\lambda|^3 + 6 |\lambda|^2 + 10 |\lambda| + 2) - 1] v^2
+ (\lambda^2 -1)y^2 + (\lambda y + x)^2.
$$

Take $\alpha_2 > 0$ small enough so that
$$
\alpha_2^2(|\lambda| + 1)^2 + \alpha_2 (4 |\lambda|^3 + 6 |\lambda|^2 + 10 |\lambda| + 2) < 1.
$$

If $\lambda < 1$, then 
$
\lambda^2 x^2 + y^2
= \lambda^2 v^2 + (1 - \lambda^2) y^2
$.
Then
$$
\text{Im}(p(z)) \geq
- [\alpha_2^2(|\lambda| + 1)^2 + \alpha_2 (4 |\lambda|^3 + 6 |\lambda|^2 + 10 |\lambda| + 2) - \lambda^2] v^2
+ (1 - \lambda^2)y^2 + (\lambda y + x)^2.
$$

Take $\alpha_2 > 0$ small enough so that
$$
\alpha_2^2(|\lambda| + 1)^2 + \alpha_2 (4 |\lambda|^3 + 6 |\lambda|^2 + 10 |\lambda| + 2) < \lambda^2.
$$

This implies $\text{Im}(p(z)) > 0$ when $z \neq 0 \in \L_2$.

Finally, by both these inequalities we can decipher $p^{-1}(0) \cap (K_1 \cup K_2) = \{0\}$, which is clearly polynomially convex. 
Hence, $K_1 \cup K_2$ is polynomially convex by Kallin's \Cref{lemma:Kallins-poly}.

\textbf{Case 2:} Let $A$ be of type (\ref{eqn:diag-form}). \\
Let $\lambda_1, \lambda_2 \in \R$ and
$$
A = \begin{bmatrix}
  \lambda_1 & 0 \\
  0 & \lambda_2
\end{bmatrix}.
$$
We will mimic the same steps as in Case 1 with the same polynomial $p$.
Pick arbitrary compact sets $K_1 \subset \L_1$ and $K_2 \subset \L_2$. Then we will show
\begin{eqnarray*}
p(K_1) \subset \{z\in \mathbb{C}: \text{Im}(z)<0\} \cup \{0\}, \\
p(K_2)\subset \{z\in \mathbb{C}: \text{Im}(z)>0\} \cup \{0\}.
\end{eqnarray*}
Let $z\in \L_1$. Then $z=(x,y) + i\psi_1(x,y)$ for some $(x,y)\in \R^2$.
Solving for $\text{Im}(p(z))$, we get
\begin{dmath}
$$
\text{Im}(p(z)) = 
\left(\psi_1^{(1)}(x,y)\right)^{2} 
+ \left(\psi_1^{(2)}(x,y)\right)^{2} 
+ 2 \psi_1^{(1)}(x,y) \lambda_1 x 
+ 2 \psi_1^{(2)}(x,y) \lambda_2 y
- x^{2} - y^{2}
$$
\end{dmath}
The same inequality shown in case 1 works here.
$$
\text{Im}(p(z)) \leq (\alpha_1^2 + 2 \alpha_1 |\lambda_1| + 2 \alpha_1 |\lambda_2| - 1) v^2.
$$
Choose $\alpha_1>0$ small enough such that
$$
\alpha_1^2 + 2 \alpha_1 |\lambda_1| + 2 \alpha_1 |\lambda_2| < 1.
$$
Then $\text{Im}(p(z)) < 0 $ for $z \neq 0 \in \L_1$.

Now let $z\in \L_2$, say $z=(A+iI)(x,y) + i(A+iI)\psi_2(x,y)$ for some $(x,y)\in \R^2$.
If we solve for the explicit form of $z$, we get
$$
z=\left[\begin{matrix}\left(\lambda_{1} + i\right) \left(i \psi_2^{(1)}(x,y) + x\right)\\\left(\lambda_{2} + i\right) \left(i \psi_2^{(2)}(x,y) + y\right)\end{matrix}\right].
$$
\begin{dmath*}
$$
\text{Im}(p(z)) = 
- \left(\psi_2^{(1)}(x,y)\right)^{2} \lambda_{1}^{2} 
- \left(\psi_2^{(1)}(x,y)\right)^{2} 
+ 2 \psi_2^{(1)}(x,y) \lambda_{1}^{3} x 
+ 2 \psi_2^{(1)}(x,y) \lambda_{1} x 
- \left(\psi_2^{(2)}(x,y)\right)^{2} \lambda_{2}^{2} 
- \left(\psi_2^{(2)}(x,y)\right)^{2} 
+ 2 \psi_2^{(2)}(x,y) \lambda_{2}^{3} y 
+ 2 \psi_2^{(2)}(x,y) \lambda_{2} y 
+ \lambda_{1}^{2} x^{2} 
+ \lambda_{2}^{2} y^{2} 
+ x^{2} 
+ y^{2}.
$$
\end{dmath*}
Combining this with the inequalities, we get
$$
\text{Im}(p(z)) \geq - [
  \alpha^2(1+|\lambda_1|^2 + |\lambda_2|^2) + 2\alpha(|\lambda_1|^3 + |\lambda_2|^3 + |\lambda_1| + |\lambda_2|)
] v^2 + v^2 + \lambda_1^2 x^2 + \lambda_2^2 y^2.
$$
If we choose $\alpha >0$ small enough such that
$$
\alpha^2(1+|\lambda_1|^2 + |\lambda_2|^2) + 2\alpha(|\lambda_1|^3 + |\lambda_2|^3 + |\lambda_1| + |\lambda_2|) < 1
$$
Then $\text{Im}(p(z)) >0$ when $z\neq 0 \in \L_2$.

Again, with both these results we get $p^{-1}(0) \cap (K_1 \cup K_2) = \{0\}$ and this is polynomially convex.
Hence, $K_1 \cup K_2$ is polynomially convex by Kallin's \Cref{lemma:Kallins-poly}.

\textbf{Case 3:} Let $A$ be of type (\ref{eqn:img-root-form}). \\
Let $s, t \in \R$ and
$$
A = \begin{bmatrix}
  s & -t \\
  t & s
\end{bmatrix}.
$$

Take compacts $K_1 \subset \L_1$ and $K_2 \subset \L_2$. We wish to show $K_1\cup K_2$ is polynomially convex.
Consider the polynomial:
$$
p(z) = z_1^2 + z_2^2.
$$

Let $z\in \L_1$. Then $z=(x,y) + i\psi_1(x,y)$ for some $(x,y)\in \R^2$.
Solving for $\text{Im}(p(z))$, we get
\begin{dmath*}
$$
\text{Im}(p(z)) = 2 \psi_1^{(1)}(x,y) x + 2 \psi_1^{(2)}(x,y) y.
$$
\end{dmath*}
And solving for $\text{Re}(p(z))$, we get
\begin{dmath*}
$$
\text{Re}(p(z)) = - \left(\psi_1^{(1)}(x,y)\right)^{2} - \left(\psi_1^{(2)}(x,y)\right)^{2} + x^{2} + y^{2}.
$$
\end{dmath*}
Using the inequalities here, we get
$
\text{Re}(p(z)) \geq (1-\alpha_1^2)v^2.
$
If we choose $\alpha_1$ so that $ \alpha_1^2 < 1$. Which is true for Lipschitz graphs with $\alpha_1 < 1$.
Hence, $\text{Re}(p(z)) >0$ when $z\neq 0 \in \L_1$.

Let $z\in \L_2$, say $z=(A+iI)(x,y) + i(A+iI)\psi_2(x,y)$ for some $(x,y)\in \R^2$.
If we solve for the explicit form of $z$, we get
$$
z=\left[\begin{matrix}- t \left(i \psi_2^{(2)}(x,y) + y\right) + \left(s + i\right) \left(i \psi_2^{(1)}(x,y) + x\right)\\t \left(i \psi_2^{(1)}(x,y) + x\right) + \left(s + i\right) \left(i \psi_2^{(2)}(x,y) + y\right)\end{matrix}\right].
$$
Now we solve for $\text{Im}(p(z))$ and $\text{Re}(p(z))$:
\begin{dmath*}
$$
\text{Im}(p(z)) = 
- 2 \left(\psi_2^{(1)}(x,y)\right)^{2} s 
+ 2 \psi_2^{(1)}(x,y) s^{2} x 
+ 2 \psi_2^{(1)}(x,y) t^{2} x 
- 2 \psi_2^{(1)}(x,y) x 
- 2 \left(\psi_2^{(2)}(x,y)\right)^{2} s 
+ 2 \psi_2^{(2)}(x,y) s^{2} y 
+ 2 \psi_2^{(2)}(x,y) t^{2} y 
- 2 \psi_2^{(2)}(x,y) y 
+ 2 s x^{2} 
+ 2 s y^{2}.
$$
\end{dmath*}
\begin{dmath*}
$$
\text{Re}(p(z)) = 
- \left(\psi_2^{(1)}(x,y)\right)^{2} s^{2} 
- \left(\psi_2^{(1)}(x,y)\right)^{2} t^{2} 
+ \left(\psi_2^{(1)}(x,y)\right)^{2} 
- 4 \psi_2^{(1)}(x,y) s x 
- \left(\psi_2^{(2)}(x,y)\right)^{2} s^{2} 
- \left(\psi_2^{(2)}(x,y)\right)^{2} t^{2} 
+ \left(\psi_2^{(2)}(x,y)\right)^{2} 
- 4 \psi_2^{(2)}(x,y) s y 
+ (t^{2} + s^{2} - 1) x^{2} 
+ (t^{2} + s^{2} - 1) y^{2}.
$$
\end{dmath*}
We consider two sub-cases based on if $t^2 + s^2 < 1$.

\textbf{Case 3.1} Say $t^2 + s^2 < 1$.
Using the inequalities on $\text{Re}(p(z))$, we get
$$
\text{Re}(p(z)) \leq [(t^2+s^2-1)(1-\alpha_2^2) - 8\alpha_2 |s|] v^2.
$$
If choose $\alpha_2 >0 $ so small that
$$
(t^2+s^2-1)(1-\alpha_2^2) < 8\alpha_2 |s|.
$$
Then $\text{Re}(p(z)) <0$ when $z\neq 0 \in \L_2$.

Again it is clear that $p^{-1}(0) \cap (K_1 \cup K_2) = \{0\}$.
By Kallin's \Cref{lemma:Kallins-poly}, we get $K_1 \cup K_2$ is polynomially convex.

\textbf{Case 3.2} Say $t^2 + s^2 \geq 1$.
We will show that $p(K_1)$ and $p(K_2)$ lie in different angular sectors which only
intersect at the origin. Then we claim $K_1 \cup K_2$ is polynomially convex by
Kallin's \Cref{lemma:Kallins-poly}.

Consider the following angular sectors
\begin{eqnarray*}
  V_1 &=& \{x+iy \in \C: |y| \leq \epsilon |x| \}, \\
  V_2 &=& \{x+iy \in \C: |(t^2+s^2-1)y - 2sx| \leq \epsilon |y| \}, \\
\end{eqnarray*}
where $\epsilon > 0$ small enough such that $V_1 \cup V_2 = \{0\}$.

For suitable choice of  $\alpha_1$ and $\alpha_2$, we claim:
$p(\L_1) \subset V_1$ and 
$p(\L_2) \subset V_2$.

\begin{itemize}
  \item
    Let $z \in \L_1$. Apply the inequalities on $|\text{Im}(p(z))|$:
    $$
    |\text{Im}(p(z))| \leq  2\alpha_1 v^2,
    $$
    and on $|re(p(z))|$
    $$
    |\text{Re}(p(z))| \geq (1-\alpha_1^2) v^2.
    $$
    If we choose $\alpha_1$ small enough such that such that $2\alpha_1 < \epsilon(1-\alpha_1^2)$, then
    $$
    |\text{Im}(p(z))| \leq \epsilon |\text{Re}(p(z))|.
    $$
    Hence, $p(\L_1) \subset V_1$.

  \item
    Let $z \in \L_2$. Calculating $(t^2+s^2-1)(re(p(z))) - 2s (\text{Im}(p(z))) = $ 
    \begin{dmath*}
    $$
    2 \psi^{(1)}(x,y) s^{4} x + 4 \psi^{(1)}(x,y) s^{2} t^{2} x + 4 \psi^{(1)}(x,y) s^{2} x + 2 \psi^{(1)}(x,y) t^{4} x - 4 \psi^{(1)}(x,y) t^{2} x + 2 \psi^{(1)}(x,y) x + 2 \psi^{(2)}(x,y) s^{4} y + 4 \psi^{(2)}(x,y) s^{2} t^{2} y + 4 \psi^{(2)}(x,y) s^{2} y + 2 \psi^{(2)}(x,y) t^{4} y - 4 \psi^{(2)}(x,y) t^{2} y + 2 \psi^{(2)}(x,y) y .
    $$
    \end{dmath*}
    Using inequalities, we get
    $$
    |(t^2+s^2-1)(\text{Re}(p(z))) - 2s (\text{Im}(p(z)))| \leq \alpha_2 (4s^4+ 8s^2 t^2 + 8 s^2 + 4 t^4 + 8 t^2) v^2.
    $$
    Now we will find a lower bound for $|\text{Im}(p(z))|$. First note that $|s|\neq 0$;
    to ensure $A$ has no eigenvalue which is purely imaginary of modulus greater than 1, we must have
    $|s| \neq 0$. Now we use the inequalities on $|\text{Im}(p(z))|$: 
    $$
    |\text{Im}(p(z))| \geq 2|s| v^2 - \alpha_2[2\alpha_2 |s| + 4s^2+t^2 + 4]v^2.
    $$
    We can choose $\alpha_2>0$ small such that
    \begin{eqnarray*}
      \epsilon(2|s| - \alpha_2[2\alpha_2 |s| + 4s^2+t^2 + 4]) \geq  \alpha_2 (4s^4+ 8s^2 t^2 + 8 s^2 + 4 t^4 + 8 t^2)
    \end{eqnarray*}
    This ensures
    $$
    |(t^2+s^2-1)(\text{Re}(p(z))) - 2s (\text{Im}(p(z)))| \leq \epsilon |\text{Im}(p(z))|.
    $$
    Hence, $p(\L_2) \subset V_2$.
\end{itemize}

Again, $p^{-1}(0) \cap (K_1 \cup K_2) = \{0\}$: The angular sectors intersected with a closed ball is polynomially convex. And $A_1 \cap A_2 = \{0\}$.
Hence $K_1 \cup K_2$ is polynomially convex by Kallin's \Cref{lemma:Kallins-poly}.
\end{proof}

\subsection{Carleman approximation on the union of two Lipschitz graphs}

The proof of \Cref{thm:lipschitz-main} will follow similar arguments to that of the union of two totally real
subspaces \cite{manne1994carlemantwo}. Before this, we show the following
proposition:

\begin{proposition} \label{prop:lipschitz-sqeeze}
  Let $\L_1$ and $\L_2$ be as in \Cref{thm:lipschitz-main}. Fix $R > 1$, and let
  $$
  K =  \{z\in \L_1 \cup \L_2 : |z| \leq R\} \text{ and }
  K_j =\{z\in  \L_j : 1 \leq |z| \leq R\} \quad j = 1, 2.
  $$

  Then there exists $\delta > 0$ such that
  for any continuous function $f \in \smoo(\L_1 \cup \L_2) $ with compact support
  in $K_1\cup K_2$ and 
  any $\epsilon > 0$ there is an entire function $h \in \holo(\C^n)$ such that
  $\|h-f\|_{K} < \epsilon $ and $|h(z)| < \epsilon$ for $|z|\leq \delta$.
\end{proposition}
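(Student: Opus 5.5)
We follow the scheme of the proof of \Cref{prop:sqeeze}. The one change is that the linear Gaussian kernel on $\R^n$ is replaced by the Manne kernel on a Lipschitz graph from \Cref{prop:lipshitz-one-approx}, and the geometric estimates use the cones $C_1,C_2$ of (\ref{eqn:plane-angular}). We treat each piece separately. Fix $j\in\{1,2\}$ and let $\phi_j$ be the $\C$-linear map sending $\L_j$ to a graph $\Gamma_{\psi}=\{x+i\psi(x)\}$ with Lipschitz constant less than $1$. For $\L_1$ take $\phi_1=\mathrm{id}$; for $\L_2$ take $\phi_2=(A+iI)^{-1}$. Set
$$h_j^{(t)}(z)=(t\sqrt\pi)^{-n}\int_{\phi_j(\L_j)} f\circ\phi_j^{-1}(u)\,e^{-(z-u)^2/t^2}\,du .$$
By \Cref{prop:lipshitz-one-approx} this function is entire, converges to $f\circ\phi_j^{-1}$ uniformly on $\phi_j(\L_j)$, and converges to $0$ uniformly on $A_\delta$.

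The key step is to find a neighbourhood of $\phi_j(K\setminus K_j)$, containing $0$ and the compact pieces of the other graph, that lies in some $A_\delta$. Because $\mathrm{supp} f\subset K_1\cup K_2$ sits at distance at least $1$ from the origin, and $K_j\cap\L_k=\varnothing$ by (\ref{eqn:plane-angular}), we choose $\gamma>0$ with $d(\phi_j(K_j),\phi_j(\L_k\cap K))>3\gamma$ and $d(\phi_j(K_j),0)>3\gamma$. We then define $\tilde U_j,\tilde V_j$ exactly as before and aim to show $\tilde W_j=\tilde U_j\cap\tilde V_j\subset A_{c\gamma^2}$. For $z=x+iy\in\tilde W_j$ and $u=a+i\psi(a)\in\phi_j(K_j)$ we have
$$\Re(z-u)^2=|x-a|^2-|y-\psi(a)|^2 .$$
The analogue of the earlier estimate $|y|<\gamma$ must now compare $y$ with $\psi(a)$. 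Since $z$ is within $2\gamma$ of some $u'=a'+i\psi(a')$, we get $|y-\psi(a)|\le 2\gamma+\alpha|a'-a|$ and $|x-a|\ge |a'-a|-2\gamma$. This is the step I expect to be the main obstacle. Closeness of $z$ to the graph alone does not make $\Re(z-u)^2$ positive unless $\alpha$ is small. The hypothesis $\alpha_j<1/3$ should enter here: one shows that $|x-a|^2-|y-\psi(a)|^2\ge (1-\alpha)^2|a-a'|^2-(\text{const})\gamma|a-a'|-O(\gamma^2)$, and that the distance condition $d(z,u)>\tfrac32\gamma$ forces the right-hand side above a positive multiple of $\gamma^2$. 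If needed, shrink $\tilde U_j$ to a thinner tube of width $\gamma'\ll\gamma$ around $\phi_j(K_j)$, which is harmless. One must also check that $\alpha_j'$, the constant after the change $\phi_2$, stays below $1/3$ or at least below $1$; this is where the normalisation of \Cref{twoLipschitz} is used.

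With $\tilde W_j\subset A_{c\gamma^2}$ established, the rest is the Cousin I argument verbatim. First, $K$ is polynomially convex by \Cref{prop:poly-cnv-graphs}, so we take a polynomial polyhedron $\Omega$ with $K\subset\Omega\subset\Omega_o$ and put $U_j=\Omega\cap\phi_j^{-1}(\tilde U_j)$ and $V_j=\Omega\cap\phi_j^{-1}(\tilde V_j)$. Next, the open mapping theorem applied to the surjective map $T(\alpha,\beta)=(\alpha-\beta)|_{W_j}$ gives $\tilde\alpha_j^{(t)}\to0$ and $\tilde\beta_j^{(t)}\to0$ with $\tilde\alpha_j^{(t)}-\tilde\beta_j^{(t)}=h_j^{(t)}\circ\phi_j$. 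Glue $h_j^{(t)}\circ\phi_j-\tilde\alpha_j^{(t)}$ on $U_j$ with $-\tilde\beta_j^{(t)}$ on $V_j$ to obtain $H_j^{(t)}\in\holo(\Omega)$. Then $H_j^{(t)}\to f$ on $\L_j\cap K$, $H_j^{(t)}\to0$ on $\L_k\cap K$, and $H_j^{(t)}\to0$ on $\overline{B(0,\delta)}\Subset V_1\cap V_2$. Finally, $H^{(t)}=H_1^{(t)}+H_2^{(t)}$ is holomorphic on a neighbourhood of the polynomially convex set $K\cup\overline{B(0,\delta)}$; this union is polynomially convex by Kallin-type reasoning, or simply replace the ball with $\overline{B(0,\delta)}\cap(\L_1\cup\L_2)$ together with the Oka–Weil theorem on a polyhedron containing both. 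Oka–Weil then gives an entire $h$ with the required estimates.
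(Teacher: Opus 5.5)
Your scheme (Manne's kernel on $\phi_j(\L_j)$, the cover $\tilde U_j,\tilde V_j$, Cousin I with the open mapping theorem, then gluing and summing) is the paper's. The gap is the one step that is new in the Lipschitz case: you need $\tilde W_j\subset A_c$ for some $c>0$, and neither your sketch nor your remedy proves it.

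\textbf{Why the sketch fails.} From $|z-u'|<2\gamma$ and $|z-u|>\tfrac32\gamma$, the triangle inequality gives no lower bound on $|u-u'|$; one can even have $u=u'$. Your two inequalities give at best $(1-\alpha^2)|a-a'|^2-4(1+\alpha)\gamma|a-a'|$, which is positive only when $|a-a'|>4\gamma/(1-\alpha)$.

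\textbf{Why the remedy fails.} Thinning $\tilde U_j$ to width $\gamma'\ll\gamma$ around $\phi_j(K_j)$ breaks the cover $\tilde U_j\cup\tilde V_j\supset\phi_j(\Omega)$ unless you also lower the threshold $\theta$ of $\tilde V_j$ below $\gamma'$. But then, already for $\psi\equiv0$, the set $\tilde W_j$ contains points $z=u+iv$ with $u\in\phi_j(K_j)$, $v\in\R^n$ and $\theta<|v|<\gamma'$. At such points $\operatorname{Re}(z-u)^2=-|v|^2<0$. So smallness of $\alpha$ is not the mechanism either.

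\textbf{The fix.} What must be thin, relative to the threshold $\tfrac32\gamma$, is the outer tube. Set $\alpha=\alpha_j$ and $\kappa=(1-\alpha^2)/(1+\alpha^2)$. Replace the condition $d(z,\phi_j(K))<\gamma$, and the width of $\Omega_o$, by $d(z,\phi_j(K))<\gamma'$ with $\gamma'\le\kappa\gamma/16$. Keep the thresholds $2\gamma$ and $\tfrac32\gamma$.

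Now take $z\in\tilde W_j$ and pick $u'\in\phi_j(K)$ with $|z-u'|<\gamma'$. Since $d(u',\phi_j(K_j))<2\gamma+\gamma'\le3\gamma$, the separation forces $u'=a'+i\psi(a')\in\phi_j(\L_j)$. For every $u=a+i\psi(a)\in\phi_j(K_j)$ we get $D:=|u-u'|>\tfrac32\gamma-\gamma'\ge\tfrac12\gamma$. For two points of the graph,
\[
\operatorname{Re}(u'-u)^2=|a'-a|^2-|\psi(a')-\psi(a)|^2\ge(1-\alpha^2)|a'-a|^2\ge\kappa D^2 .
\]
Expand $(z-u)^2=(u'-u)^2+2(u'-u)\cdot(z-u')+(z-u')^2$. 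This gives $\operatorname{Re}(z-u)^2\ge\kappa D^2-2\gamma'D-\gamma'^2$. The right side is increasing for $D\ge\gamma'/\kappa$, hence is at least $\kappa\gamma^2/8$.

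This argument uses only $\alpha_j<1$. Also $\Omega$, and hence $\delta$, still depends only on the geometry, not on $f$ or $\epsilon$. The paper gets a constant proportional to $1-9\alpha_j^2$, hence its hypothesis $\alpha_j<1/3$. It does so via $\|y-\psi_j(u)\|^2\le\gamma^2+\alpha_j^2\|x-u\|^2$, which drops a cross term and treats $\|y-\psi_j(x)\|$ as the distance to the graph. So it is not a safe template.

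\textbf{The final step.} You assert without argument that $K\cup\overline{B(0,\delta)}$ is polynomially convex. Replacing the ball by $\overline{B(0,\delta)}\cap(\L_1\cup\L_2)$ would lose the bound on the full ball $|z|\le\delta$. The proof of \Cref{thm:lipschitz-main} uses that bound to make $\sum h_\eta$ converge on compacta of $\C^2$.

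Your other suggestion is the right one, and it needs no convexity of the union. $\Omega$ is a polynomial polyhedron containing $K\cup\overline{B(0,\delta)}$, so it contains $\widehat{K\cup\overline{B(0,\delta)}}$. Oka--Weil on this hull then replaces $h^{(t)}$ by a polynomial satisfying both estimates. The paper stops at $h^{(t)}\in\holo(\Omega)$, so this step is worth stating in this form.
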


\begin{proof}
  We set $\phi_1(z) = z$ and $\phi_2(z) = (A+iI)^{-1}(z)$. Then
  $$
    \phi_1(\L_1) = \{x+i\psi_1(x): x\in \R^n\}
    \quad\&\quad
    \phi_2(\L_2) = \{x+i\psi_2(x): x\in \R^n\}.
  $$
  Let
  $$
  \tilde{L}_1 =     \{x+iy \in \C^n: \|y\|\leq \alpha_1 \|x\| \} \quad\&\quad
  \tilde{L}_2 =     (A+iI)\{x+iy \in \C^n: \|y\|\leq \alpha_2 \|x\| \}, \\
  $$
  $$
  \tilde{K}_1 =     \tilde{L}_1 \cap \{z\in \C^n: 1 \leq |z| \leq R \} \quad \& \quad
  \tilde{K}_2 =     \tilde{L}_2 \cap \{z\in \C^n: 1 \leq |z| \leq R \}.
  $$
  Then by (\ref{eqn:plane-angular}) we can choose $\gamma >0$ such that
  \begin{eqnarray*}
    d(\phi_j(\tilde K_j), \phi_j(\tilde L_k)) > 3 \gamma
  \end{eqnarray*}
  for  $i, j = 1,2$ where $i\neq j$.
  As $K_j \subset \tilde K_j$ and $L_k \subset \tilde L_k$:
  $$
    d(\phi_j(K_j), \phi_j(L_k)) > 3 \gamma
  $$

  We define a neighborhood of $K$ given by
  $
    \Omega_o = \{ z\in \C^n : d(z,K)<\eta \}
  $.
  where $\eta > 0$ is taken so small that
  $
  \phi_j(\Omega_o) \subset \{z\in \C^n : d(z, \phi_j(K)) < \gamma \}
  $.
  for $j=1,2$.
  
  \newcommand{\U}{\tilde{U}_j}
  \newcommand{\V}{\tilde{V}_j}
  \newcommand{\W}{\tilde{W}_j}
  For $j = 1,2$, define open sets $\U$ and $\V$  which cover $\phi_j(K)$:
  $$
  \U = \left\{ z \in \C^n: d\left(z,\phi_j(K)\right) < \gamma \text{ and } d\left(z, \phi_j(K_j)\right) < 2\gamma \right\},
  $$
  $$
  \V = \left\{ z \in \C^n: d\left(z,\phi_j(K)\right) < \gamma \text{ and } d\left(z, \phi_j(K_j)\right) > \frac{3}{2}\gamma \right\}.
  $$
  Observe that, $\phi_j(K_j) \subset \U$, $\phi_j(L_k\cap K) \subset \V$ for $k\neq j$ and
  $0\in \V$.

  Define the convolution as per \Cref{prop:lipshitz-one-approx}:
  \newcommand{\h}{\tilde{h}_j^{(t)}}
  \renewcommand{\a}{\tilde{\alpha}_j^{(t)}}
  \renewcommand{\b}{\tilde{\beta}_j^{(t)}}
  $$
  \h(z)  :=  \left(\frac{1}{t\sqrt{\pi}}\right)^n \int_{\L} f\circ \phi_j^{-1}(u) e^{-\frac{(z-u)^2}{t^2}}du.
  $$
  All limits are considered as $t\to 0^+$.
  Now by \Cref{prop:lipshitz-one-approx} we have $\h \in \holo(\C^n)$,
  and $\h \to f\circ \phi_j^{-1}$ uniformly  on $\phi_j(L_j)$.
  Let $\W = \V \cap \U$, we will show $\W \subset A_{\frac{\gamma^2}{4}}$; this gives us
  $\h \to 0$ uniformly on $\W$.

  Let $z=(x_1+i y_1, ..., x_n+i y_n) \in \W$ and $\eta = (u_1+i\psi_j^{(1)}(u), ..., u_n + i\psi_j^{(n)}(u)) \in \phi_j(K_j)$.
  \begin{eqnarray*}
    Re(z-\eta)^2 &=& Re \sum_{l=1}^n (z_l - \eta_l)^2, \\
    &=& \sum_{l=1}^n (x_l - u_l)^2 - (y_l - \psi_j^{(l)}(u))^2, \\
    &=& \sum_{l=1}^n (x_l - u_l)^2 - \sum_{l=1}^n (y_l - \psi_j^{(l)}(u))^2.
  \end{eqnarray*}
  As $z\in\W \subset \V$, $d(z,u) > \frac{3}{2}\gamma$ which gives
  $$
    \sum_{l=1}^n (x_l - u_l)^2 + \sum_{l=1}^n (y_l - \psi_j^{(l)}(u) )^2 > \frac{9}{4}\gamma^2
  $$
  Note that $d(z,\psi_j(K_j)) < 2\gamma$, $d(\psi_j(K_j), L_k) > 3\gamma$ and
  $d(z, \psi_j(\L_1 \cup \L_2)) \leq d(z,\psi_j(K))<\gamma$  implies $d(z,\psi_j(\L_j)) < \gamma$.
  With this we get $d(z,\psi_j(\L_j)) = \|y-\psi(x)\|_{\R^n} < \gamma$ as the minima is attained
  when we look at the distance between  $z$ and $x+i\psi(x) \in \L_j$.
  \begin{eqnarray*}
    \sum_{l=1}^n (y_l - \psi_j^{(l)}(u) )^2  &=& \| y - \psi_j(u) \|^2_{\R^n} \\
    &\leq& \| y - \psi_j(x) \|^2_{\R^n} + \| \psi_j(x) - \psi_j(y) \|^2_{\R^n} \\
    &\leq& \gamma^2 + \alpha_j^2 \|x - u \|_{\R^n}^2.
  \end{eqnarray*}
  Combining the last two inequalities, 
  \begin{eqnarray*}
  \|x - u\|_{\R^n}^2 > \frac{5}{4}\gamma^2 - \alpha_j^2 \|x\|_{\R^n}^2, \\
    (1+\alpha_j^2)\|x - u\|_{\R^n}^2 > \frac{5}{4}\gamma^2, \\
    \|x - u\|_{\R^n}^2 > (1+\alpha_j^2)^{-1}\frac{5}{4}\gamma^2.
  \end{eqnarray*}
  Finally,
  \begin{eqnarray*}
    Re(z-u)^2 &= \|x-u\|^2 + \|y-\psi_j(u)\|^2 
    &\geq \|x-u\|^2 - \gamma^2 - \alpha_j^2 \|x - u \|^2 \\
    &\geq (1-\alpha_j^2)\|x-u\|^2 - \gamma^2 
    &\geq (1-\alpha_j^2)(1+\alpha_j^2)^{-1}\frac{5}{4}\gamma^2 - \gamma^2 \\
    &\geq (1-9\alpha_j^2)(1+\alpha_j^2)^{-1} \gamma^2. &
  \end{eqnarray*}
  If $\alpha_j^2 < 1/3$ then 
  $\W \subset A_{c\gamma}$ where $c =(1-9\alpha_j^2)(1+\alpha_j^2)^{-1}$ 
  and $\h \to 0$ uniformly on $\W$ (\Cref{prop:lipshitz-one-approx}).

  The set $K$ is polynomially convex as it is a compact subset of $\cup \L_j$.
  We can find a pseudoconvex domain $\Omega$ such that $K \subset \Omega \subset \Omega_o$
  by a polynomial polyhedra.

  Let $0 \leq j \leq m$.
  Set $U_j = \Omega \cap \U$, 
  $V_j = \Omega \cap \V$ and
  $W_j = V_j \cap U_j = \Omega \cap \phi_j^{-1}(\W)$.
  Then $U_j \cup V_j = \Omega$ and $\h \circ \phi_j \to 0$ uniformly on $W_j$.

  Consider the linear operator $T: \holo(U_j) \bigoplus \holo(V_j) \to \holo(W_j)$
  given by
  $$T(\alpha, \beta) = (\alpha - \beta)|_{W_j}$$
  The Cousin I problem can be solved on the pseudoconvex set $\Omega$ with $\{U_j, V_j\}$;
  Hence $T$ is surjective.
  This allows us to use the open mapping theorem for Fréchet spaces.
  As $\h \circ \phi_j \to 0$ in $\holo(W_j)$, we can solve for $(\a, \b) \in \holo(U_j) \bigoplus \holo(V_j)$
  with
  $$T(\a,\b) = (\a-\b)|_{W_j} = \h \circ \phi_j$$
  such that $(\a, \b) \to 0$, i.e., $\a \to 0$ in $\holo(U_j)$ and $\b \to 0$ in $\holo(V_j)$.

  \newcommand{\hjt}{h_j^{(t)}}
  \vspace{1em}
  We have seen that $0 \in V_j$ for all $j$.
  Take $\delta > 0$ such that the closed ball $B(0,\delta) \subset\subset V_j$ for all $j$.
  Define $\hjt \in \holo(\Omega)$ with
  $\hjt = \h \circ \phi_j -\a$ on $U_j$ and $\hjt = -\b$ on $V_j$. This is well defined
  due to the Cousins I problem.

  Then $\hjt \to 0$ in $\holo(V_j)$; this gives: 
  $\hjt \to 0$ uniformly on $\L_k \cap K  \subset\subset V_j$ for $k\neq j$,
  and $\hjt \to 0$ uniformly on $B(0,\delta)  \subset \subset V_j$ for $k\neq j$.
  Further,
  $\hjt \to f$ uniformly on compact subsets of
  $\L_j \cap V_j$ as $f=0$ on $\L_j - K_j$.

  The entire function $\h \circ \phi \to f$ uniformly on $\L_j$ and
  $\a \to 0$ in $\holo(U_j)$ which implies
  $\hjt \to f$ uniformly on compact subsets of $\L_j \cap U_j$.
  Combining with the previous argument, $\hjt \to f$ uniformly on $\L_j \cap K$,
  $\hjt \to 0$ uniformly on $\L_k \cap K$ for $k\neq j$,
  and $\hjt \to 0$ uniformly on $B(0,\delta)$.

  \renewcommand{\ht}{h^{(t)}}
  Define $\ht := \sum_{j=1}^{2} \hjt$.
  Then $\ht \to f$ uniformly on $\L_j \cap K$ for all $j$.
  Hence, $\ht \to f$ uniformly on $K$ and $\ht \to 0$ uniformly on $B(0,\delta)$.

\end{proof}

\begin{corollary} \label{cor:vanish-lipshitz}
  For $R_o>1$, we can choose $\delta > 0$ for which the following holds:
  If $f_o \in \smoo(\L_1 \cup \L_2)$ such that $f_o(z)=0$ for $|z| \leq r$ for some $r>0$.
  Then
  for any $\epsilon > 0$,
  there exists $h_o \in \holo(\C^n)$ such that $|h_o(z)-f_o(z)|<\epsilon$
  for $z \in \left(\L_1 \cup \L_2 \right) \cap  \{z\in \C^n: |z| \leq rR_o\} $, and
  $|h_o(z)|<\epsilon$ for $|z|\leq r\delta$.
\end{corollary}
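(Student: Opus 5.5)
The plan is to copy the proof of \Cref{cor:vanish}: rescale by $r$ and apply \Cref{prop:lipschitz-sqeeze}. The new difficulty is that $\L_1\cup\L_2$ is not invariant under dilations, whereas in \Cref{cor:vanish} each $L_j$ was a linear subspace. So we cannot use one fixed set $K$ for every $r$. Instead we apply the proposition to the rescaled union
\[
\tfrac1r(\L_1\cup\L_2)=\L_1^{r}\cup\L_2^{r},\qquad \L_1^r=\{x+i\psi_1^r(x)\},\quad \L_2^r=(A+iI)\{x+i\psi_2^r(x)\},
\]
where $\psi_j^r(x):=\psi_j(rx)/r$. This family has the same structure for every $r$. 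Indeed, $\psi_j^r$ is $\smoo^1$, satisfies $\psi_j^r(0)=0$, and is Lipschitz with the \emph{same} constant $\alpha_j$. The matrix $A$ is unchanged. Hence the three hypotheses of \Cref{thm:lipschitz-main} hold for $\L_1^r\cup\L_2^r$ with the same $\alpha_1,\alpha_2$. This includes polynomial convexity, since \Cref{prop:poly-cnv-graphs} gives constants depending only on $A$.

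The key step is to show that the $\delta$ produced by \Cref{prop:lipschitz-sqeeze} (with $R=R_o+1$) can be chosen independently of $r$. I will go through that proof and check that every quantitative choice depends only on $A$, $\alpha_1,\alpha_2$ and $R$.
\begin{itemize}
\item $\gamma$ is chosen from the cones $\tilde L_j$ and the shells $\tilde K_j$. These are determined by $A,\alpha_j,R$ and contain $\L_j^r$ and its shell for every $r$.
\item The estimate $\mathrm{Re}(z-u)^2\ge c\gamma^2$ on $\tilde W_j$ uses only $\gamma$ and $\alpha_j<1/3$.
\item $0\in \tilde V_j$ holds with $d(0,\phi_j(K_j))\ge c_0>\tfrac32\gamma$, where $c_0$ depends only on $\|\phi_j\|$ and $\|\phi_j^{-1}\|$ (recall $|z|\ge1$ on $K_j$). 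Shrinking $\gamma$ if necessary, $\tilde V_j$ contains a ball $B(0,\delta_0)\cap\{d(\cdot,\phi_j(K))<\gamma\}$ with $\delta_0$ depending only on these data.
\end{itemize}
I would then take $\delta<\delta_0$ and, for each $r$, build the pseudoconvex neighbourhood $\Omega$ so that it also contains $\overline{B(0,\delta)}$.

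The main obstacle is this last requirement, because the proposition works on $\Omega\supset K$ while the bound $|h|<\epsilon$ is wanted on a full ball. My first attempt is to replace $K$ by $K'=K\cup\overline{B(0,\delta)}$ and argue that $K'$ is polynomially convex for $\delta$ small, uniformly in $r$. For this I would use Kallin's \Cref{lemma:Kallins-poly} with the same polynomial $p$ as in the proof of \Cref{prop:poly-cnv-graphs}: the images $p(K_1\cap\{|z|\ge\delta\})$ and $p(K_2\cap\{|z|\ge\delta\})$ lie in separated sectors, and the ball part is handled by a Runge-type argument. If that works, $\Omega$ can be a polynomial polyhedron around $K'$ inside the $\eta$-neighbourhood. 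Then the Cousin I argument gives $h^{(t)}\to 0$ on $\overline{B(0,\delta)}$, and Oka--Weil on $K'$ yields an entire $h$.

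With this uniform $\delta$, the rest is exactly as in \Cref{cor:vanish}. Set $f(z):=f_o(rz)\rho_{R_o}(|z|)$ on $\L_1^r\cup\L_2^r$; it is supported in $K_1\cup K_2$ because $f_o$ vanishes for $|z|\le r$. Obtain $h$ with $\|h-f\|_K<\epsilon$ and $|h|<\epsilon$ on $B(0,\delta)$. Finally define $h_o(z):=h(z/r)$. Undoing the scaling gives $|h_o-f_o|<\epsilon$ on $(\L_1\cup\L_2)\cap\{|z|\le rR_o\}$ and $|h_o|<\epsilon$ on $\{|z|\le r\delta\}$.
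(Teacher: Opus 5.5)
Your diagnosis is right, and it goes further than the paper's own proof. The paper repeats the proof of \Cref{cor:vanish} verbatim: it applies \Cref{prop:lipschitz-sqeeze} on the fixed set $\L_1\cup\L_2$ to $f_o(z/r)\rho_{R_o}(|z|)$ and sets $h_o(z)=h(rz)$. That is meaningful only for cones, and it takes $\delta$ from a single application to a single set. Several parts of your proposal are correct: the rescaled graphs $\psi_j^r(x)=\psi_j(rx)/r$, the scalings $f(z)=f_o(rz)\rho_{R_o}(|z|)$ and $h_o(z)=h(z/r)$, and your check that $\gamma$, $\eta$, the estimate on $\tilde W_j$ and a ball around $0$ inside $\tilde V_j$ depend only on $A,\alpha_1,\alpha_2,R$.

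The proof is nevertheless incomplete. The decisive step is a polynomially convex neighbourhood $\Omega$ with $K^r\cup\overline{B(0,\delta)}\subset\Omega\subset\{d(\cdot,K^r)<\eta\}$, for one $\delta$ independent of $r$. You leave this as ``if that works'', and the Kallin argument you sketch cannot deliver it. The polynomial $p$ of \Cref{prop:poly-cnv-graphs} is a homogeneous quadratic, so $p(\overline{B(0,\delta)})$ is the whole closed disc of radius $\delta^2\max_{|\zeta|=1}|p|$ about $0$. This disc meets both sectors containing $p(K_1^r)$ and $p(K_2^r)$. Moreover, in general $|p(z)|<|z|^2\max_{|\zeta|=1}|p|$ on the graphs, so points of $K_j^r$ with $|z|$ slightly larger than $\delta$ are also mapped into the disc. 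Hence $K'$ has no splitting into two polynomially convex pieces whose image hulls meet in at most one boundary point, which is what \Cref{lemma:Kallins-poly} requires. The unspecified ``Runge-type argument'' for the ball is exactly the missing content; it is the bounded $E$-hull problem in local form.

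Two remarks on how to close the gap.
\begin{itemize}
\item Polynomial convexity of $K'$ is more than you need. It suffices that $\widehat{K^r\cup\overline{B(0,\delta)}}\subset\{d(\cdot,K^r)<\eta\}$, because a polynomial polyhedron then fits in between and Oka--Weil turns $h^{(t)}\in\holo(\Omega)$ into an entire function. For a single $r$ this holds for small $\delta$, by upper semicontinuity of polynomial hulls under Hausdorff convergence, since $K^r$ is polynomially convex.
\item Uniformity in $r$ can come from compactness instead of an explicit Kallin argument. The $\psi_j^r$ are equi-Lipschitz with $\psi_j^r(0)=0$, so by Arzel\`a--Ascoli the family $\{K^r\}$ is precompact in the Hausdorff metric. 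Every limit is a compact subset of a union of two Lipschitz graphs with the same $A,\alpha_1,\alpha_2$. It is therefore polynomially convex by the proof of \Cref{prop:poly-cnv-graphs}, which uses only the Lipschitz bounds. For each limit set, a polynomial polyhedron around it gives a $\delta$ that works on a whole Hausdorff neighbourhood of that set. Finitely many such neighbourhoods cover the family, and the minimum of the corresponding $\delta$'s works for all $r$.
\end{itemize}
Some argument of this kind is needed. Neither your proposal nor the paper's proof currently supplies it.
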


\begin{proof} 
  We define
  $$f(z) := f_o(z/r) \rho_{R_o}(|z|)$$
  where $\rho_{R_o}: [0,+\infty) \to [0,1]$ is a continuous function that is
  $1$ on $[0,R_o]$ and $0$ on $[R_o+1, + \infty)$. Let $R=R_o+1$;
  it follows that $\text{supp}(f_o) \subset \cup K_j$ with the notations of
  \Cref{prop:lipschitz-sqeeze}. By the proposition, we have $\delta>0$ corresponding
  to $R$. Then there is an entire function $h\in \holo(\C^n)$ such that
  $\|f - h\|_K < \epsilon $ and $\|h\|_{B(0,\delta)} < \epsilon$.
  Define entire function $h_o(z) := h(rz)$. We have $|f(z) - h(z)| < \epsilon$ for
  $z \in (\L_1 \cup \L_2) \cap \{z\in\C^n: |z| \leq R\}$.
  Replacing $f$ with $f_o$, 
  $|f_o(z/r) - h_o(z/r)| < \epsilon$ for
  $z \in (\L_1 \cup \L_2) \cap \{z\in\C^n: |z| \leq R_o\}$.
  Which is 
  $|f_o(z) - h_o(z)| < \epsilon$ for
  $z \in (\L_1 \cup \L_2) \cap \{z\in\C^n: |z| \leq rR_o\}$ and 
  $|h_o(z)| < \epsilon$ for
  $|z| \leq r\delta$.
\end{proof}

\begin{proof}[Proof of \Cref{thm:lipschitz-main}] 

  We set $M = \L_1 \cup \L_2$.
  We are given $f\in \smoo(M)$ and $\epsilon \in \smoo(M, \R_{>0})$.
  Let $R = R_o$ and $\delta$ as choosen in \Cref{cor:vanish-lipshitz}.
  Take an increasing sequence of positive reals $\{r_\eta\}$ such that
  $r_{\eta+2} = R r_\eta$. We define continuous functions $\rho_\eta: [0,+\infty) \to [0,1]$
  such that $\rho_\eta(t)$ is $0$ for $t\in[0,r_\eta]$ and $1$ for $t\in [r_{\eta + 1}, +\infty)$.

  Construct an exhaustion of $M$ by compacts
  $$
  K_\eta = \{z\in M : |z| \leq r_\eta R\}
  $$
  for which the corresponding closed ball as in \Cref{cor:vanish-lipshitz} will be
  $$
  B_\eta = B(0,r_\eta \delta) \text{ and } \epsilon_\eta = \min_{t\in K_\eta}{\epsilon(t)}.
  $$

  As $K_0$ is polynomially convex, we can approximation $f$ on $K_0$ by an entire function $h_0$ such that
  $$
    |h_0(z) - f(z)| < \epsilon_0/2 \quad \forall z \in K_0.
  $$

  We want an entire function $h_1$ such that
  $$
    |h_1(z) - \rho_0(|z|)(f(z) - h_0(z) )| < \epsilon_1/2^2 \quad \forall z \in K_1
  $$ and $|h_1|_{B_1} \leq \epsilon_1/2^2$. Such a $h_1$ can be found using
  \Cref{cor:vanish-lipshitz} as  $f_o(z) = \rho_0(|z|)(f(z) - h_0(z) )$ satisfies the conditions
  with $r = r_0$.

  This process can be done inductively to get $h_\eta$ an entire functions
  such that
  $$
    |h_\eta(z) - \rho_\eta(|z|)(f(z) -\sum_{l=1}^{\eta-1} h_l(z) )| < \epsilon_\eta/2^{\eta+1} \quad \forall z \in K_\eta
  $$
  and $|h_\eta|_{B_\eta} < \epsilon_\eta/2^{\eta+1}$.

  Define $h = \sum_{\eta \in \mathbb{N}} h_\eta$. Given any compact set $A\subset \C^n$, we can find $\eta$
  such that $A\subset B_\eta$. So, $h$ converges uniformly on $A$ as
  $|\sum_{j>\eta} h_j|_{A} < \epsilon_\eta/2^{\eta}$.
  This implies $h$ is an entire function.

  We are done if we show $|h(z)-f(z)|< \epsilon(z)$ for $z\in M$.
  Take any $z\in M$,
  say $r_{\eta_o} < |z| \leq r_{\eta_o +1}$.
  $$
    |h(z) - f(z)|= 
    \left|\sum_{\eta =0}^{\infty} h_\eta(z)  - f(z) \right|
     \leq \sum_{\eta > \eta_o}^{\infty} |h_\eta(z)| + \left|\sum_{\eta =0}^{\eta_o} h_\eta  - f(z) \right|
  $$
  \begin{eqnarray*}
     &\leq&  \epsilon/2^{\eta_o+1} + \left| h_{\eta_o}(z) - \rho_{\eta_o}(z) \left( f(z) - \sum_{\eta=0}^{\eta_o-1}h_\eta(z)\right)\right|
     + (1-\rho_{\eta_o}(z))\left|\sum_{\eta =0}^{\eta_o - 1 } h_\eta(z)  - f(z) \right| \\
     &\leq&  \epsilon(z)/2^{\eta_o+1} + \epsilon(z)/2^{\eta_o+1}
     + (1-\rho_{\eta_o}(z))\left|h_{\eta_o -1} - \rho_{\eta_o -1}(z)\left(f(z) - \sum_{\eta =0}^{\eta_o - 2 } h_\eta(z)   \right) \right| \\
     &\leq&  \epsilon(z)/2^{\eta_o+1} + \epsilon(z)/2^{\eta_o+1} + \epsilon(z)/2^{\eta_o} \leq \epsilon(z)
  \end{eqnarray*}

\end{proof}

\section{Carleman approximation on certain surfaces with hyperbolic points}
\label{section:surfaces-hyperbolic-point}

In this section, we will study a family of surfaces $M \subset \C^2$ with a hyperbolic complex point 
that allows Carleman approximation.
We begin by showing Carleman sets behave well under proper holomorphic maps.

\subsection{Proper maps and Carleman sets}

\begin{proposition}
\label{prop:bdd-E-hull-proper-map}
Let $\Phi: \C^n \to \C^n$ be a proper holomorphic map and $M \subset \C^n$ be a closed subset.
If $\Phi^{-1}(M)$ has bounded E-hulls then $M$ has bounded E-hulls.
\end{proposition}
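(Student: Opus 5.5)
Fix a compact set $K\subset\C^n$. We must show that $\widehat{K\cup M}\setminus(K\cup M)$ is bounded. Since $\Phi$ is proper, $K' := \Phi^{-1}(K)$ is compact. By hypothesis, $\widehat{K'\cup\Phi^{-1}(M)}\setminus(K'\cup\Phi^{-1}(M))$ is bounded, say contained in a closed ball $B$. Note that $K'\cup\Phi^{-1}(M)=\Phi^{-1}(K\cup M)$. The plan is to show
\[
\widehat{K\cup M}\subset \Phi\bigl(\widehat{\Phi^{-1}(K\cup M)}\bigr).
\]
Once this holds, any point $w\in\widehat{K\cup M}\setminus(K\cup M)$ equals $\Phi(z)$ for some $z$ in the hull upstairs. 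This $z$ cannot lie in $\Phi^{-1}(K\cup M)$, since otherwise $w\in K\cup M$. Hence $z\in B$, so $w\in\Phi(B)$, which is compact. This gives boundedness.

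The key step is this inclusion. Here hulls of closed (non-compact) sets should be understood as the union of hulls of compact pieces, consistent with the definition of polynomial convexity for closed sets. Concretely, $\widehat{X}=\bigcup_{j}\widehat{X\cap \overline{B(0,j)}}$, or we use $\widehat{X}$ as the set of points $w$ with $|p(w)|\le\sup_{X\cap\overline{B(0,j)}}|p|$ for some $j$. So it suffices to prove the compact version: for compact $L\subset\C^n$,
\[
\widehat{L}\subset\Phi\bigl(\widehat{\Phi^{-1}(L)}\bigr).
\]
Applying this with $L=(K\cup M)\cap\overline{B(0,j)}$, we get that $\Phi^{-1}(L)\subset\Phi^{-1}(K\cup M)$ is compact by properness, and its hull sits in $\widehat{\Phi^{-1}(K\cup M)}$.

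To prove the compact inclusion, I will use that a proper holomorphic self-map of $\C^n$ is surjective and finite, with fibers of bounded cardinality $d$. Take $w\notin\Phi(\widehat{\Phi^{-1}(L)})$. Then the finite fiber $\Phi^{-1}(w)$ is disjoint from the polynomially convex compact set $\widehat{\Phi^{-1}(L)}$. Using Oka--Weil, I construct an entire function $g$ on $\C^n$ with $|g|<1$ on $\Phi^{-1}(L)$ and $g>2$ (or suitably large) at each point of $\Phi^{-1}(w)$. Then I push $g$ forward via the symmetric functions over fibers: $\sigma_k(w')=$ the $k$-th elementary symmetric function of $g$ over $\Phi^{-1}(w')$, counted with multiplicity. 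These are holomorphic on $\C^n$ by the standard theory of finite branched coverings (Riemann extension across the branch locus). Their polynomial growth is not needed, since entire functions approximate on compacta by polynomials, and polynomial convexity of compacts coincides with holomorphic convexity in $\C^n$.

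From $P(T,w')=\prod (T-g(\zeta))$, a pure-$\C$ argument shows that some $\sigma_k$ separates $w$ from $L$. An easier route is the norm $N(w')=\prod_{\zeta\in\Phi^{-1}(w')}g(\zeta)$, replacing $g$ by $g^N$ for large $N$ to control the sizes. With $|g^N|\le\varepsilon$ on $\Phi^{-1}(L)$ and $|g^N|\ge 1$ on $\Phi^{-1}(w)$, we get $|N|\le\varepsilon^{d}$ on $L$ while $|N(w)|\ge1$. This shows $w\notin\widehat L$. The main obstacle is this pushforward step: justifying that fiberwise symmetric functions of an entire function are entire (finite map, normal-space argument) and handling multiplicities. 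I expect to cite the standard fact that $\Phi_*$ of holomorphic functions under a proper finite map is holomorphic, rather than reprove it.
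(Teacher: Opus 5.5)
Your proof is correct, and its outer structure matches the paper's. Both reduce to a hull-comparison lemma for compacta under $\Phi$, then map the bounded exceptional set upstairs down by $\Phi$. The genuine difference is in the lemma.

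The paper proves the stronger inclusion $\Phi^{-1}(\widehat{S})\subset\widehat{\Phi^{-1}(S)}$, i.e.\ \emph{every} preimage of a hull point lies in the upstairs hull. It does this by pushing forward plurisubharmonic functions via $v(\eta)=\max_{\Phi^{-1}(\eta)}u$, which is plurisubharmonic by Klimek's theorem, and then using that polynomial hulls of compacta coincide with plurisubharmonic hulls. Its last step implicitly uses surjectivity of $\Phi$ to pass from boundedness of $\Phi^{-1}(\widehat{M\cup K}\setminus M)$ to boundedness of $\widehat{M\cup K}\setminus M$.

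You prove only $\widehat{L}\subset\Phi(\widehat{\Phi^{-1}(L)})$, i.e.\ \emph{some} preimage lies in the hull. You do this with Oka--Weil upstairs and the holomorphic norm $\prod_{\Phi^{-1}(w')}g$ downstairs. This is weaker, but it is exactly what the E-hull argument consumes, and surjectivity is built into the statement.

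Your route stays entirely within holomorphic functions and polynomials. In exchange it needs the finite branched-covering structure of $\Phi$: finite fibres, a degree $d$, and Riemann extension across the branch locus. At branch points, the bounds on $L$ and at $w$ follow by continuity, because $|g|$ is small on a neighbourhood of $\Phi^{-1}(L)$ and large on a neighbourhood of $\Phi^{-1}(w)$. Taking powers of $g$ is not even needed, since $\sup_{\Phi^{-1}(L)}|g|<1$ and $|g|>2$ on the fibre already separate. The paper's route needs only Klimek's pushforward theorem.

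Two side remarks. If you used all the symmetric functions $\sigma_k$, with the root bound $|T|\le 1+\max_k|\sigma_k|$ and a $g$ that is large at a single fibre point, you would recover the paper's stronger inclusion. Conversely, your inclusion already gives the fact the paper later quotes from Stout: $L$ is polynomially convex whenever $\Phi^{-1}(L)$ is.
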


Before we prove the proposition, we will prove
a lemma on preimages and polynomial hulls under proper holomorphic maps.

\begin{lemma}
    Let $\Phi: \C^n \to \C^n$ be a proper holomorphic map.
    If $S \subset \C^n$ is a closed set,
    then $\Phi^{-1}(\widehat S) \subset \widehat{\Phi^{-1}(S)}$.
\end{lemma}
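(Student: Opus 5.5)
The plan is to argue by contradiction with a symmetrization (``norm'') construction over the fibers of $\Phi$, first for compact $S$ and then passing to closed sets by exhaustion. Since $\Phi$ is proper, it is a finite branched covering of some degree $d$. Hence for any holomorphic $q$ on $\C^n$ the elementary symmetric functions of the values $q(\zeta)$, $\zeta\in\Phi^{-1}(w)$ counted with multiplicity, are entire functions of $w$. Indeed, they are holomorphic off the branch locus $\Phi(\{\det D\Phi=0\})$, which is a proper analytic subset, and locally bounded, so Riemann's extension theorem applies. Also, since polynomials approximate entire functions uniformly on compacts, the polynomial hull $\widehat S$ of a compact $S$ coincides with its hull with respect to entire functions.

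\textbf{Compact case.} Let $S$ be compact, so $K':=\Phi^{-1}(S)$ is compact by properness. Suppose $z\in\Phi^{-1}(\widehat S)\setminus\widehat{K'}$. Then there is a polynomial $q$ with $q(z)=1$ and $r:=\sup_{K'}|q|<1$. For $N\in\mathbb N$, form
\[
P_N(w,T)=\prod_{\zeta\in\Phi^{-1}(w)}\bigl(T-q(\zeta)^N\bigr)=T^d+a_{1,N}(w)T^{d-1}+\dots+a_{d,N}(w),
\]
where the coefficients $a_{j,N}$ are entire by the remark above.
\begin{itemize}
\item For $w\in S$, every root $q(\zeta)^N$ has modulus at most $r^N$, so $|a_{j,N}(w)|\le\binom{d}{j}r^{Nj}$.
\item Since $w_0:=\Phi(z)\in\widehat S$ and $a_{j,N}$ is entire, the same bound $|a_{j,N}(w_0)|\le\binom dj r^{Nj}$ holds at $w_0$.
\item By Fujiwara's bound, every root of $P_N(w_0,\cdot)$ has modulus at most $2\max_j|a_{j,N}(w_0)|^{1/j}\le 2\cdot 2^{d}\,r^N$, which is $<1$ for $N$ large.
\item But $z\in\Phi^{-1}(w_0)$, so $q(z)^N=1$ is a root of $P_N(w_0,\cdot)$. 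This is a contradiction.
\end{itemize}
Hence $\Phi^{-1}(\widehat S)\subset\widehat{\Phi^{-1}(S)}$ for compact $S$.

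\textbf{Closed case.} For closed $S$, the hull is understood as the union of the hulls of its compact subsets, equivalently the union of the $\widehat{S\cap\overline{B(0,k)}}$. If $z\in\Phi^{-1}(\widehat S)$, then $\Phi(z)\in\widehat{S_k}$ for some compact $S_k\subset S$. The compact case gives $z\in\widehat{\Phi^{-1}(S_k)}$. Since $\Phi^{-1}(S_k)$ is a compact subset of $\Phi^{-1}(S)$, this lies in $\widehat{\Phi^{-1}(S)}$.

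\textbf{Main obstacle.} The delicate step is justifying that the fiberwise symmetric functions are entire. This needs the structure of proper holomorphic self-maps of $\C^n$: they are surjective and finite, with constant generic fiber cardinality $d$, and their critical values form a proper analytic subset. I would quote this standard fact (e.g.\ from Rudin's treatment of proper maps) rather than reprove it. Local boundedness near the branch locus follows from properness, and then Riemann extension finishes the argument.
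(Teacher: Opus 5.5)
Your proof is correct, but it takes a genuinely different route from the paper's.

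The paper works with plurisubharmonic hulls. For a psh function $u$ on $\C^n$ it forms the fiberwise maximum $v(\eta)=\max_{\Phi^{-1}(\eta)}u$, which is psh by a push-forward theorem of Klimek. For $p\in\Phi^{-1}(\widehat{S_j})$ it then chains
\[
u(p)\le v(\Phi(p))\le \sup_{S_j}v=\sup_{\Phi^{-1}(S_j)}u,
\]
using that the polynomial hull and the psh hull of a compact set coincide.

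You stay entirely within holomorphic functions. The coefficients of the fiber polynomial $P_N(w,T)$ are entire, so their bounds on $S$ carry over to $\widehat S$, because the hull with respect to entire functions equals the polynomial hull. Fujiwara's bound then rules out the root $q(z)^N=1$.

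What each approach buys:
\begin{itemize}
\item The paper's proof is shorter, because all the branching behaviour of $\Phi$ is absorbed into Klimek's theorem and the psh/polynomial hull equivalence.
\item Yours needs the branched-covering structure of $\Phi$ explicitly: surjectivity, finite fibers, constant generic degree $d$, critical values forming a proper analytic set, and Riemann extension. In exchange it avoids pluripotential theory altogether. It is the classical symmetric-function argument for proper maps.
\end{itemize}

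One step deserves an explicit sentence. At a critical value $w_0=\Phi(z)$, you use that the Riemann-extended coefficients are still the elementary symmetric functions of the fiber values counted with multiplicity, so that $q(z)^N$ really is a root of $P_N(w_0,\cdot)$. The same issue arises for the coefficient bound at points of $S$ that are critical values. Both follow from density of regular values and openness of $\Phi$:
\begin{itemize}
\item For the root: take regular values $w_k\to w_0$ with preimages $\zeta_k\to z$, and pass to the limit in $P_N(w_k,q(\zeta_k)^N)=0$.
\item For the bound at critical $w\in S$: use that $\sup|q|$ over $\Phi^{-1}(\overline{B(w,\epsilon)})$ tends to $\max_{\Phi^{-1}(w)}|q|\le r$ as $\epsilon\to 0$, by properness, then pass to the limit along regular values.
\end{itemize}
With that added, the argument is complete.
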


\begin{proof}
Let $S \subset \C^n$ be a closed set. 
Then its hull $\widehat S = \cup \widehat S_j$
where $\{S_j\}$ is some exhaustion of $S$ by compact sets.
It suffices to show 
$\Phi^{-1}(\widehat S_j) \subset \widehat{\Phi^{-1}(S_j)}$.

Let $u: \C^n \to \C^n$ be a plurisubharmonic function.
We can consider the following push forward of $u$:
$$
	v(\eta) = \max_{z\in \Phi^{-1}(\eta)} \{ u(z) \}.
$$
Then $v: \C^n \to \C^n$ is a plurisubharmonic function as shown in  Klimek \cite[Theorem 2.9.26]{klimek1991}.
Note that any plurisubharmonic function from $\C^n$ to $\C^n$ is a surjection.

Fix $j$. Pick $p \in \Phi^{-1}(\widehat S_j)$.
Then $q = \Phi(p) \in \widehat S_j$.
As $\widehat S_j$ corresponds to the plurisubharmonic hull of $S_j$, we have
$v(q) \leq \sup_{\eta \in S_j} v(\eta)$. Hence,
$$
u(p) \leq v(q) \leq \sup_{\eta \in S_j} v(\eta) \leq \sup_{z\in \Phi^{-1}(S_j)} u(z).
$$
Since this holds for any plurisubharmonic $u$, we have $p \in \widehat { \Phi^{-1}(S_j)}$.
    
\end{proof}

\begin{proof}[Proof of \Cref{prop:bdd-E-hull-proper-map}]
Pick compact $K \subset \C^n$.
We need to show $\widehat{M\cup K}\setminus{M \cup K}$ is bounded.
It suffices to show $\widehat{M\cup K}\setminus{M}$ is bounded. 

As $\Phi^{-1}(M)$ has bounded E-hulls and $\Phi$ is proper,
$[\Phi^{-1}(M \cup K)]\ehat \setminus \Phi^{-1}(M \cup K)$ is bounded.
It is clear that $[\Phi^{-1}(M \cup K)]\ehat \setminus \Phi^{-1}(M)$
is also bounded.
By the above lemma, 
$\Phi^{-1}(\widehat{M \cup K}) 
\subset 
[\Phi^{-1}(M \cup K)]\ehat$.
Hence,
$$
\Phi^{-1}(\widehat{M \cup K} \setminus M) =
\Phi^{-1}(\widehat{M \cup K}) \setminus \Phi^{-1}(M) 
\subset [\Phi^{-1}(M \cup K)] \ehat
\setminus \Phi^{-1}(M).
$$
Since $\Phi$ is a continuous map, 
$\widehat{M \cup K} \setminus M$ is bounded.
\end{proof}

\begin{theorem}
    Let $M\subset \C^n$ be a stratified totally real set
    such that $\Phi^{-1}(M) \subset \C^n$ is a Carleman set
    for some proper holomorphic map $\Phi: \C^n \to \C^n$.
    Then $M$ allows Carleman approximation in $\C^n$.
		\label{thm:carleman-proper-map}
\end{theorem}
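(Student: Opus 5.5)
The plan is to reduce to the Magnusson–Wold characterization. Since $M$ is a stratified totally real set, $M$ is a Carleman set as soon as it is polynomially convex and has bounded $E$-hulls. The approach is to pull both properties down from $\Phi^{-1}(M)$. Bounded $E$-hulls come from \Cref{prop:bdd-E-hull-proper-map}, and polynomial convexity comes from the preceding lemma on preimages of hulls. For this to work we first need two facts about $\Phi^{-1}(M)$ itself: it is polynomially convex, and it has bounded $E$-hulls.

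\textbf{Step 1: necessary conditions for $\Phi^{-1}(M)$.} A Carleman set is polynomially convex, as recalled in the introduction. The Magnusson–Wold theorem gives bounded $E$-hulls only for stratified totally real sets, so I would check that $\Phi^{-1}(M)$ is stratified totally real. Its strata are $\Phi^{-1}(X_j)$. On the complement of the branch locus $Z=\{\det D\Phi=0\}$ the map $\Phi$ is a local biholomorphism, so preimages of totally real strata are totally real there. The points of $\Phi^{-1}(M)\cap Z$ would be absorbed as extra lower strata.

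This step is the main obstacle. $\Phi^{-1}(M)\cap Z$ need not be totally real, and in general it could contain complex curves. If that happens, I would avoid the stratified hypothesis on $\Phi^{-1}(M)$ altogether. Instead I would show directly that a Carleman set has bounded $E$-hulls, using the standard argument from Magnusson–Wold's necessity direction. Suppose a point $p\in\widehat{K\cup\Phi^{-1}(M)}\setminus(K\cup\Phi^{-1}(M))$ lies far out. Approximate a function that is $0$ near $K$ and large near the part of $\Phi^{-1}(M)$ close to $p$. Holomorphic functions satisfy the hull estimate, so this gives a contradiction. I expect the paper takes this route.

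\textbf{Step 2: transfer to $M$.} \Cref{prop:bdd-E-hull-proper-map} applies directly and gives that $M$ has bounded $E$-hulls. For polynomial convexity, take an exhaustion of $M$ by compacts $S_j$. The preceding lemma gives $\Phi^{-1}(\widehat S_j)\subset\widehat{\Phi^{-1}(S_j)}$. The set $\Phi^{-1}(S_j)$ is compact by properness and is contained in $\Phi^{-1}(M)$. Since $\Phi^{-1}(M)$ is polynomially convex with bounded $E$-hulls (take $K=\Phi^{-1}(S_j)$), the hull $\widehat{\Phi^{-1}(S_j)}$ lies in $\Phi^{-1}(M)$. Hence $\widehat S_j\subset\Phi(\Phi^{-1}(M))=M$, using that a proper holomorphic self-map of $\C^n$ is surjective.

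This shows $\widehat S_j\subset M$ but not $\widehat S_j=S_j$, so I need to choose the exhaustion well. I would take $S_j=\widehat{M\cap\overline{B(0,j)}}$. Each $S_j$ is compact and lies in $M$ by the above, the $S_j$ increase and exhaust $M$, and each is polynomially convex. So $M$ is polynomially convex.

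\textbf{Step 3: conclude.} $M$ is stratified totally real, polynomially convex, and has bounded $E$-hulls, so Magnusson–Wold gives that $M$ is a Carleman set.
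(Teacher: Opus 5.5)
Your proposal is correct and follows the paper's overall architecture: bounded $E$-hulls of $\Phi^{-1}(M)$ from Magnusson--Wold, transfer to $M$ via \Cref{prop:bdd-E-hull-proper-map}, polynomial convexity of $M$, then the Magnusson--Wold characterization. The genuine difference is in the polynomial convexity step.

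\emph{The paper's route.} It notes that each compact $\Phi^{-1}(M_j)$ lies in a Carleman set. Hence $\mathcal P(\Phi^{-1}(M_j))=\mathcal C(\Phi^{-1}(M_j))$, so $\Phi^{-1}(M_j)$ is polynomially convex. It then quotes Stout's theorem on proper maps to conclude that every compact subset of $M$ is polynomially convex.

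\emph{Your route.} You combine the paper's lemma $\Phi^{-1}(\widehat S)\subset\widehat{\Phi^{-1}(S)}$ with surjectivity of $\Phi$. This only gives $\widehat S\subset M$, and you repair it by choosing the exhaustion $S_j=\widehat{M\cap\overline{B(0,j)}}$. That is valid and self-contained, since the lemma is essentially the half of Stout's theorem being used. It is slightly roundabout, though:
\begin{itemize}
\item bounded $E$-hulls of $\Phi^{-1}(M)$ play no role in this step, so invoking them there is superfluous;
\item $\Phi^{-1}(S)$ is itself polynomially convex for every compact $S\subset M$, because $\mathcal P=\mathcal C$ on compact subsets of a Carleman set;
\item so the same lemma plus surjectivity gives $\widehat S\subset S$ directly, and any exhaustion works.
\end{itemize}

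One caution on Step 1. Your concern that $\Phi^{-1}(M)$ need not be stratified totally real is reasonable; the paper simply invokes Magnusson--Wold for ``Carleman $\Rightarrow$ bounded $E$-hulls'' without discussing the structure of $\Phi^{-1}(M)$. However, the sketch you give for that implication does not work as written:
\begin{itemize}
\item Carleman approximation on $\Phi^{-1}(M)$ controls the approximant $h$ neither on $K\setminus\Phi^{-1}(M)$ nor at the hull point $p\notin\Phi^{-1}(M)$;
\item making $h$ large on part of $\Phi^{-1}(M)$ cannot contradict the estimate $|h(p)|\le\sup_L|h|$.
\end{itemize}
You should rely on the cited necessity result, as the paper does, rather than on this argument.
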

\begin{proof}
The Carleman set $\Phi^{-1}(M)$ has bounded E-hulls by 
Magnusson-Wold \cite[Theorem 1.2]{magnusson2016characterization}.
By \Cref{prop:bdd-E-hull-proper-map}, $M$ has bounded E-hulls.

Let $M = \cup M_j$ be a exhaustion of $M$ by compact sets.
For each $j$,
the compact set $\Phi^{-1}(M_j) \subset \Phi^{-1}(M)$, satisfies
$\mathcal P (\Phi^{-1}(M_j))  = \mathcal C (\Phi^{-1}(M_j))$
as $\Phi^{-1}(M)$ allows Carleman approximation.
Hence $\Phi^{-1}(M_j)$ is polynomially convex for each $j$.
As $\Phi$ is a proper holomorphic map, $M_j$ is polynomially convex
\cite[Theorem 1.6.24]{stout2007polynomial}.
Hence $M = \cup  M_j = \cup \widehat M_j = \widehat M$. 

Finally, we apply the characterization given by
Magnusson-Wold \cite[Theorem 1.1]{magnusson2016characterization}
for the stratified totally real set $M$.
As $M$ has bounded E-hulls and is polynomially convex, it
is a Carleman set.
\end{proof}

\subsection{Pseudo Lipschitz graphs}

Recall a Lipschitz graph on $\R^n$ is a set
$$
\Gamma_\psi = \{ x + i\psi(x): x\in \R^n\},
$$
where $\psi: \R^n \to \R^n$ satisfies the Lipschitz condition 
$$
\| \psi(x) - \psi(x') \|  \leq \alpha \| x - x' \|,
$$ for all $x,x' \in \R^n$ where $\alpha \in [0,1)$.

\begin{definition}[Pseudo Lipschitz graph]
	Let $f:\R^n \to \C^n$ be a differentiable function and $\alpha \in [0, \frac{1}{2})$
	such that
	$$
		\| f(x) - f(x') \|_{\C^n}  \leq \alpha \| x - x' \|_{\R^n},
	$$ for all $x,x' \in \R^n$.
	Define the set
	$$
	\Lambda_f = \{ x + f(x): x\in \R^n\}.
	$$
	A pseudo Lipschitz graph is a set that can be mapped
	to $\Lambda_f$ by a $\C$-linear automorphism of $\C^n$
	for some $f$ as shown above.
\end{definition}

\begin{proposition}
	A pseudo Lipschitz graph is a Lipschitz graph.
\end{proposition}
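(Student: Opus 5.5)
Write $f = u + iv$ with $u, v : \R^n \to \R^n$, so that $\Lambda_f = \{(x + u(x)) + i\,v(x) : x \in \R^n\}$. Since $\C$-linear automorphisms preserve both classes, it suffices to show that $\Lambda_f$ itself is a Lipschitz graph $\Gamma_\psi$. The plan is to reparametrize by the real part: set $\Phi(x) = x + u(x)$ and then $\psi = v \circ \Phi^{-1}$. We then have $\Lambda_f = \{ y + i\psi(y) : y \in \R^n\}$, and three things remain to check: $\Phi$ is a bijection, $\psi$ is $\smoo^1$, and $\psi$ has Lipschitz constant $<1$.

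\textbf{Bijectivity of $\Phi$.} Since $\|u(x)-u(x')\| \le \|f(x)-f(x')\| \le \alpha\|x-x'\|$ with $\alpha<\tfrac12<1$, the map $u$ is a contraction. For fixed $y$, the equation $\Phi(x) = y$ is the fixed-point equation $x = y - u(x)$, which has a unique solution by the Banach fixed point theorem. The same bound gives
\[
\|\Phi(x)-\Phi(x')\| \ge (1-\alpha)\|x-x'\|,
\]
so $\Phi^{-1}$ is $(1-\alpha)^{-1}$-Lipschitz. Moreover $D\Phi = I + Du$ with $\|Du\| \le \alpha < 1$, so $D\Phi$ is invertible everywhere. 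If $f$ is $\smoo^1$ (the paper needs $\psi \in \smoo^1$, and I would assume the differentiability hypothesis is read as $\smoo^1$), the inverse function theorem makes $\Phi^{-1}$, and hence $\psi$, of class $\smoo^1$.

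\textbf{Lipschitz constant of $\psi$.} For $y = \Phi(x)$ and $y' = \Phi(x')$ we have $\|v(x)-v(x')\| \le \alpha\|x-x'\|$, so $\|\psi(y)-\psi(y')\| \le \frac{\alpha}{1-\alpha}\|y-y'\|$. The hypothesis $\alpha < \tfrac12$ is exactly what makes $\frac{\alpha}{1-\alpha} < 1$.

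\textbf{Expected obstacle.} The estimates above are crude because they bound $u$ and $v$ separately by $\alpha$, ignoring the joint bound $\|u(x)-u(x')\|^2 + \|v(x)-v(x')\|^2 \le \alpha^2 \|x-x'\|^2$. The crude bounds already suffice for $\alpha < \tfrac12$, so the only delicate point is the regularity of $\psi$. If only differentiability (not $\smoo^1$) is available, I would argue via the Lipschitz inverse: a bi-Lipschitz map that is differentiable with invertible derivative has a differentiable inverse. I would also note that the constant $\frac{\alpha}{1-\alpha}$ is what feeds into later Lipschitz-constant requirements such as $\alpha < 1/3$.
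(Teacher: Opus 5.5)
Your proposal is correct and is essentially the paper's proof: the paper likewise sets $\lambda(x) = x + \text{Re}\, f(x)$, obtains injectivity from $\|\lambda(x)-\lambda(x')\| \geq (1-\alpha)\|x-x'\|$ and surjectivity from the Banach fixed-point theorem applied to $x \mapsto y - \text{Re}\, f(x)$, and then shows that $\psi = \text{Im}\, f \circ \lambda^{-1}$ is $\frac{\alpha}{1-\alpha}$-Lipschitz with $\frac{\alpha}{1-\alpha} < 1$ because $\alpha < \frac{1}{2}$. Your additional check that $\psi$ is $\smoo^1$ (via the inverse function theorem, using $\|Du\| \leq \alpha < 1$) fills in a regularity point that the paper's argument does not address.
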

\begin{proof}
	Let $f:\R^n \to \C^n$ be a differentiable function 
	such that satisfies the pseudo Lipschitz condition with $\alpha \in [0, \frac{1}{2})$.

	Define $\lambda:\R^n \to \R^n$ as $\lambda(x) = x + \text{Re} f(x)$.
	The function $\lambda$ is bijective:
	Injectivity is shown by
	\begin{equation}
		\label{eqn:lambda-lipschitz-constant-inverse}
		\| \lambda(x) - \lambda(x') \| \geq
		\| x- x' \| - \| \text{Re}f(x) - \text{Re}f(x') \| \geq
		(1-\alpha) \| x- x' \|
	\end{equation}
	for $x,x' \in \R^n$.
	Let $y\in \R^n$.
	Consider the map $L:\R^n \to \R^n$ given by $L(x) = y - \text{Re} f(x)$.
	This map is a contraction:
	$
	\| L(x) - L(x') \| =
	\| \text{Re}f(x) - \text{Re}f(x') \| \leq \alpha \| x- x' \|
	$.
	Since $\R^n$ is a complete metric space, by Banach fixed-point theorem
	we have $L(a) = a$ for some $a \in \R^n$.
    Hence, $\lambda(a) = y$ and $\lambda$ is surjective.

	Then $\Lambda_f = \Gamma_\psi$ for $\psi(t) = \text{Im} f( \lambda^{-1}(t))$:
	$$
	\{ (x + \text{Re} f(x)) + i \text{Im} f(x) : x\in \R^n\} = 
	\{ t + i \text{Im} f(\lambda^{-1}(t)) : t = \lambda(x) \in \R^n\}.
	$$
	Finally, $\psi$ is Lipschitz:
	\begin{eqnarray*}
		\|\psi(t) - \psi(t')\| &=& \| \text{Im} f(\lambda^{-1}(t)) - \text{Im} f(\lambda^{-1}(t')) \| \\
		&\leq& \alpha \| \lambda^{-1}(t)- \lambda^{-1}(t') \| \\
		&\leq& \frac{\alpha}{1-\alpha} \| t - t' \|.
	\end{eqnarray*}
	The last inequality uses \Cref{eqn:lambda-lipschitz-constant-inverse}.
	Since $\alpha \in [0,\frac{1}{2})$, the Lipschitz constant
	$$
	\frac{\alpha}{1-\alpha} < 1.
	$$
\end{proof}

\begin{remark}
We can choose the Lipschitz constant of $\psi$ to be arbitrary small
by choosing the Lipschitz constant of $f$ to be small:
$$
\alpha_\psi =  \frac{\alpha_f}{1-\alpha_f} \leq 2 \alpha_f
$$
	\label{rmk:lipschitz-const-from-pseudo-lipschitz}
\end{remark}

\subsection{Carleman approximation on certain surfaces}

Forstneri\v c and Stout \cite{forstneric-stout1991} have shown local polynomial convexity 
of a surface $M \subset \C^n$ at a hyperbolic point.
A quantitative version of this is shown recently in \cite[Theorem 1.3]{alagandala2025}:
Consider $M \subset \C^2$ in Bishop's normal form, i.e., $M$ is given by the following equation near $0$:
$$
    z_2 = z_1 \bar z_1 + \gamma (z_1^2 + \bar z_1^2) + F(z_1).
$$
Suppose
\begin{equation}
\|F\|_{\mathcal C^2(\overline{ B_r(0))}} \leq \frac{(2\gamma -1)^3}{2^{14}\gamma^3}
	\label{eqn:pull-back-c2-condition}
\end{equation}
for $r\in (0,R]$,
then $M_r := M \cap (\overline{B_r(0)} \times \C)$ is polynomially convex and satisfies
$\mathcal P (M_r) = \mathcal C (M_r)$.
The proof show that preimage of $M_r$ under the proper holomorphic map $\Psi: \C^2 \to \C^2$ given by
$$
\Psi(z_1, z_2) = (z_1, z_1 z_2 + \gamma( z_1^2 + z_2^2))
$$
is the union of two pseudo Lipschitz graphs:
\begin{eqnarray}
    \Lambda_1 &=& \{ (\zeta, \bar \zeta + f(\zeta)): \zeta \in B_r(0) \}, \nonumber \\
    \Lambda_2 &=& \left\{ \left(\zeta, - \frac{1}{\gamma}\zeta - \bar \zeta + g(\zeta)\right): \zeta \in B_r(0) \right\},
    \label{eqn:hyper-two-lipschitz-graphs}
\end{eqnarray}
where $f: \overline{B_r(0)} \to \C$ and $g: \overline{B_r(0)} \to \C$  are $\mathcal C^1$-smooth
functions with $df(0) = dg(0) = 0$.
Further, it can shown by the inequalities in the proof of \cite[Theorem 1.3]{alagandala2025}
and by \cite[Lemma 2.13]{alagandala2025}
that the Lipschitz constants for $f$ and $g$ are bounded above:
\begin{equation}
\label{eqn:hyp-surface-lipschitz-constant-upper-bound}
\alpha_f, \alpha_g \leq \frac{2^8\gamma}{(2\gamma -1)^2} \|F\|_{\mathcal C^2(\overline{ B_r(0))}},
\end{equation}
where
$$
\| f(\eta) -  f(\eta') \| \leq \alpha_f \| \eta - \eta' \|, \quad
\| g(\eta) -  g(\eta') \| \leq \alpha_g \| \eta - \eta' \|.
$$

Now suppose \Cref{eqn:pull-back-c2-condition} holds for all $r > 0$, i.e.,
\begin{eqnarray}
	\|F\|_{\mathcal C^2(\C)} \leq \frac{(2\gamma -1)^3}{2^{14}\gamma^3}
	\label{eqn:hyperbolic-small-requirements-1-pull-back}
\end{eqnarray}

Then we can pull back the whole surface as the union of two pseudo Lipschitz graphs.
After the $\C$-linear change of coordinates given by
$$
(z_1,z_2) \mapsto  
\left(
\frac{2\gamma}{2\gamma +1}  \frac{z_2-z_1}{2i},
\frac{2\gamma}{2\gamma -1}  \frac{z_1+z_2}{2}
\right),
$$
we can show $\Psi^{-1}(M)$ is the union of:
\begin{eqnarray}
	\tilde \Lambda_1 &=& \left\{ x + \tilde f(x) : x \in \R^2 \right\}, \nonumber \\
	\tilde \Lambda_2 &=& (A+i) \left\{ x + \tilde g(x) : x \in \R^2 \right\},
	\label{eqn:pull-back-lipschitz-global-graphs}
\end{eqnarray}
where 
$$
\tilde f(x) = \left( \frac{2\gamma}{2\gamma+1} \frac{f(\zeta)}{2i}, \frac{2\gamma}{2\gamma-1}  \frac{i f(\zeta)}{2}  \right), \quad
\tilde g(x) = \left(  \frac{-g(\eta)}{2},  \frac{- i g(\eta)}{2}  \right)
$$
for $\eta = x_1 + i x_2$, $\zeta = \frac{2\gamma -1}{2\gamma} x_1 + \frac{2\gamma +1}{2\gamma} x_2$ and
$$
A = 
\begin{bmatrix}0 & - \frac{1}{2 \gamma + 1}\\- \frac{1}{2 \gamma - 1} & 0\end{bmatrix}.
$$
The Lipschitz constant of $\tilde f: \R^2 \to \C^2$ and $\tilde g: \R^2 \to \C^2$ are bounded by the Lipschitz constant of $f: \C \to \C$ and $g: \C \to \C$ respectively.
Further, this is dependent on $\|F\|_{\mathcal C^2(\C)}$ by $\Cref{eqn:hyp-surface-lipschitz-constant-upper-bound}$.
Hence,
$$
\quad \alpha_{\tilde f} \leq 2 \frac{2\gamma +1}{2\gamma -1} \alpha_f
\leq  2 \frac{2\gamma +1}{2\gamma -1}  \frac{2^8\gamma}{(2\gamma -1)^2}  \|F\|_{\mathcal C^2(\C)}
$$
$$
\alpha_{\tilde g} \leq \alpha_g \leq  \frac{2^8\gamma}{(2\gamma -1)^2}  \|F\|_{\mathcal C^2(\C)},
$$
By \Cref{rmk:lipschitz-const-from-pseudo-lipschitz}, the Lipschitz graphs $\Gamma_{\psi_1}$ and $\Gamma_{\psi_2}$
corresponding to $\tilde \Lambda_1$ and $\tilde \Lambda_2$. They have Lipschitz constants bounded above: 
\begin{eqnarray*}
	\alpha_{\psi_1} &\leq& 2 \alpha_f 
		\leq  \frac{2^{10}\gamma (2\gamma +1)}{(2\gamma -1)^3}  \|F\|_{\mathcal C^2(\C)}, \\
	\alpha_{\psi_2} &\leq& 2 \alpha_g 
		\leq  \frac{2^9\gamma}{(2\gamma -1)^2}  \|F\|_{\mathcal C^2(\C)}.
\end{eqnarray*}

The union of Lipschitz graphs $\Gamma_{\psi_1}$ and $\Gamma_{\psi_2}$ is polynomially convex:
Given any compact subset $K$ of this union, \cite[Theorem 1.3]{alagandala2025} guarantees the polynomial convexity of $K$.

The following upper bound on the Lipschitz constant ensures that the cones in \Cref{eqn:plane-angular} intersect trivially:
$$\alpha_{\psi_1}, \alpha_{\psi_2} < (2\gamma -1) \left( \frac{2\gamma}{\sqrt{(2\gamma)^2 - 1}} - 1\right).$$
This can be ensured by taking:

\begin{equation}
 \|F\|_{\mathcal C^2(\C)} \leq
	\left( \frac{2\gamma}{\sqrt{(2\gamma)^2 - 1}} - 1\right)
		\frac{{(2\gamma -1)^4} }{2^{10}\gamma (2\gamma +1)}.
	\label{eqn:hyperbolic-small-requirements-2-cone}
\end{equation}

Finally, we will enforce $\alpha_{\psi_1}, \alpha_{\psi_2} \leq \frac{1}{3}$ by taking:
\begin{equation}
 \|F\|_{\mathcal C^2(\C)} \leq
		\frac{1}{3} \frac{{(2\gamma -1)^3} }{2^{10}\gamma (2\gamma +1)}.
	\label{eqn:hyperbolic-small-requirements-3-one-third}
\end{equation}

\begin{proof}[Proof of \Cref{thm:hyperbolic-carleman-main}]
	By \cite[Theorem 1.3]{alagandala2025},
	\Cref{eqn:hyperbolic-small-requirements-1-pull-back} ensures $\Phi^{-1}(M)$ is the union
	of two Lipschitz graphs and is polynomially convex.
	This union of Lipschitz graphs allows Carleman approximation by \Cref{thm:lipschitz-main}
	if we satisfy \Cref{eqn:hyperbolic-small-requirements-2-cone} and \Cref{eqn:hyperbolic-small-requirements-3-one-third}.

	The surface $M$ is a stratified totally real set as it is totally real everywhere but one point.
	Finally, \Cref{thm:carleman-proper-map} shows that $M$ allows Carleman approximation.
\end{proof}

%
%
%
%

\section{Remarks about $\mathcal C^k$-smooth Carleman approximation}
\label{section:ck-smoothness-remarks}

They key idea in approximating over the union of such totally real sets is 
taming the holomorphic function at the intersection. This boil down to the following
from \Cref{prop:sqeeze}:
Let $\Omega$ Stein. Let $U$ and $V$ open sets such that $U \cup V = \Omega$, $W=U\cap V$.
Given $h^{(t)} \in \holo (W)$ for
all $t>0$ such that $h^{(t)} \to 0 $ as $t \to 0^{+}$ (uniform convergence).
Then there exists families of functions $\alpha^{(t)}$ and $\beta^{(t)}$ such that 
$\alpha^{(t)} \to 0$ and $\beta^{(t)} \to 0$  as $t \to 0^{+}$ (uniform convergence) such
that $\alpha^{(t)} - \beta^{(t)} = h^{(t)}$ on $W$.

To obtain this, we look at the following
linear operator $T: \holo(U) \bigoplus \holo(V) \to \holo(W)$
given by
$$T(\alpha, \beta) = (\alpha - \beta)|_{W}$$
Here the spaces are Frechet spaces with semi-norm defined by supremum norm on compact subsets.
The operator $T$ is surjective by Cousins I problem. By the open mapping theorem for Frechet
spaces we obtained $\alpha^{(t)}$ and $\beta^{(t)}$ that converge to zero.

Now, we can consider $\holo(W), \holo(V)$ and $\holo(U)$ to be Frechet spaces with semi-norm
given by $\smoo^k$ norm on compact subsets. So, if we had $h^{(t)} \to 0$ in $\smoo^k$,
then $\alpha^{(t)} \to 0 $ and $\beta^{(t)} \to 0$ in $\smoo^k$.
This give us $\smoo^k$ convergence in \cref{prop:sqeeze}.

We say a 
a $\smoo^k$ function f 
does
$\smoo^k$ Carleman approximation on
a the union of totally real planes $\cup L_j \subset \C^n$ as in \cref{thm:main}
if for any
$\epsilon \in \smoo(S,\R_{>0})$,
there exists an entire function $h$ such that
$$
|\partial_{(j)}^\alpha h(z) - \partial_{(j)}^\alpha f(z)| < \epsilon(z)
$$
for $z\in L_j$ and $|\alpha| \leq k$
where
$
\partial_{(j)}^l := (\phi_j^{-1})_*\left(\frac{\partial}{\partial x_l}\right)
$ and $\alpha$ represents the multi-index for this operator.

To have $\smoo^k$ Carleman approximation of multiple totally real planes,
we must enforce some restriction to the $\smoo^k$ function due to the following:
If we have a $\smoo^k$ function on the union $\cup L_j$ as in \cref{thm:main}, it must
have directional derivatives (about each plane) confirming with Cauchy-Riemann equation up to order $k$ to be approximated
by holomorphic functions in $\smoo^k$.
This gives us a necessary condition, the existance of a holomorphic polynomial $g$
whose directional derivatives about each plane must match with $h$ at the origin.
That is, for all $|\alpha| \leq k $ 
$$
\partial^\alpha_{(j)}h(0) = \partial^\alpha_{(j)}f(0)  
$$

Under this criteria $\smoo^k$ Carleman approximation has
been shown on the union of two totally real planes in 
\cite[Theorem 2]{manne1994carlemantwo}. The same can be generalized to
the union of multiple totally real planes in \cref{thm:main} and the union of Lipschitz graphs \cref{thm:lipschitz-main}
under the condition that the $\smoo^k$ function $f$ has a corresponding a holomorphic polynomial $g$,
such that partial derivatives till order $k$ at the origin in the directions of the planes
of $g$ match with $f$ .
Details of this argument can be found in \cite[Theorem 2]{manne1994carlemantwo} and \cite[Section 4]{manne1993carlemanone}.
\smallskip

\noindent {\bf Acknowledgements.} Sushil Gorai is partially supported by a ARG MATRICS grant (ANRF/ARGM/2025/002958/MTR) and a Core Research Grant (CRG/2022/003560) of ANRF (erstwhile SERB), Govt. of India.

\bibliographystyle{plain}
	\bibliography{biblio.bib}
\end{document}